\def\bbone{{\mathbbm{1}}}
\title{Truchet tilings and renormalization}
\author[W. Patrick Hooper]{W. Patrick Hooper}
\thanks{Supported by N.S.F. Postdoctoral Fellowship DMS-0803013}
\address{
The City College of New York\\
New York, NY, USA 10031}
\email{whooper@ccny.cuny.edu}
\newtheorem{theorem}{Theorem}
\newtheorem{proposition}[theorem]{Proposition}
\newtheorem{lemma}[theorem]{Lemma}
\newtheorem{remark}[theorem]{Remark}
\newtheorem{corollary}[theorem]{Corollary}
\theoremstyle{definition}
\newtheorem{definition}[theorem]{Definition}
\newlength{\savearraycolsep}
\newenvironment{narrowarray}[2]%
	{\setlength{\savearraycolsep}{\arraycolsep}%
	\setlength{\arraycolsep}{#1}%
	\begin{array}{#2}}%
	{\end{array}\setlength{\arraycolsep}{\savearraycolsep}}
\newcommand{\set}[2]{{\{#1~:~\textrm{#2}\}}}
\def\N{\mathbb{N}}%
\def\R{\mathbb{R}}%
\def\Z{\mathbb{Z}}%
\def\0{{\mathbf{0}}}
\def\1{{\mathbf{1}}}
\def\bd{{\mathbf{d}}}
\def\be{{\mathbf{e}}}
\def\g{{\mathbf{g}}}
\def\m{{\mathbf{m}}}
\def\bn{{\mathbf{n}}}
\def\bp{{\mathbf{p}}}
\def\s{{\mathbf{s}}}
\def\v{{\mathbf{v}}} %
\def\bw{{\mathbf{w}}}
\def\x{{\mathbf{x}}}
\def\y{{\mathbf{y}}}
\def\bpi{{\mathbf{\pi}}}
\def\sA{{\mathcal{A}}}
\def\sC{{\mathcal{C}}}
\def\sD{{\mathcal{D}}}
\def\sE{{\mathcal{E}}}
\def\sG{{\mathcal{G}}}
\def\sI{{\mathcal{I}}}
\def\sL{{\mathcal{L}}}
\def\sM{{\mathcal{M}}}
\def\sO{{\mathcal{O}}}
\def\sR{{\mathcal{R}}}
\def\sT{{\mathcal{T}}}
\def\sW{{\mathcal{W}}}
\def\bbT{{\mathbb{T}}}
\renewcommand{\d}[1]{\ensuremath{\textit{d$#1$}}}%
\def\imod#1{\allowbreak\mkern10mu({\operator@font mod}\,\,#1)}
\def\and{{\quad \textrm{and} \quad}}
\newcommand{\sm}{\smallsetminus}
\def\omegaalt{\omega^{\textrm{alt}}}
\def\id{{\textrm{id}}}
\def\barK{{\overline{K}}}
\newcommand{\ret}[2]{{R}_{#1}(#2)}
\def\NUC{{\mathit NUC}}
\def\NS{{\mathit NS}}
\def\Step{{\mathcal S}}
\newif\ifdraft\drafttrue
\newcommand{\name}[1]{\label{#1}{\ifdraft{\textcolor{blue}{\{\textrm{#1}\}}}\else\ignorespaces\fi}}
\newcommand{\tiling}[1]{{[#1]}}
\begin{document}
\maketitle
\begin{abstract}
The Truchet tiles are a pair of square tiles decorated by arcs. When the tiles are pieced together to form a Truchet tiling, these arcs join to form a family of simple curves in the plane. 
We consider a family of probability measures on the space of Truchet tilings.
Renormalization methods are used to investigate the probability that a curve in a Truchet tiling is closed.
\end{abstract}

\section{Preliminary remarks}
This article was written before I was aware of the connection to the corner percolation model studied by G{\'a}bor Pete \cite{Pete08}.
The main result in this paper duplicates results in that paper, albeit by completely different methods. The author believes that the approach in this paper can be used to obtain stronger results than indicated in this paper, such as stronger information of the probability of a curve to have length $L$ for any constant $L$.
I hope to soon release a new version addressing this point of view.

The point of view of this paper also connects to rectangle exchange maps. This
connection is developed in \cite{H12}.

\section{Introduction}
\name{sect:introduction}
The {\em Truchet tiles} are the two $1 \times 1$ squares decorated by arcs as below.
\begin{center}
\includegraphics[height=0.5in]{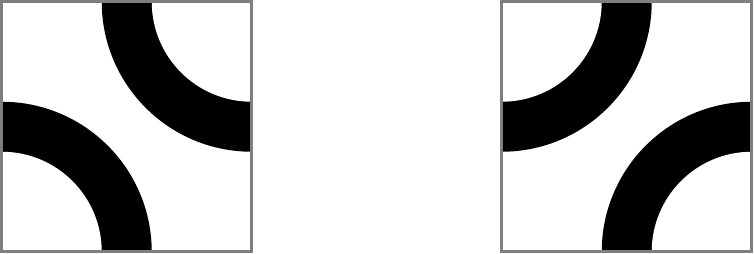}
\end{center}
We call the left tile $T_{-1}$ and the right tile $T_{1}$. The subscripts were chosen to indicate the slope of segments formed
by straightening the arcs to segments.

Given a function $\tau:\Z^2 \to \{\pm 1\}$, the {\em Truchet tiling determined by $\tau$} is the tiling of the plane formed by placing
a copy of the tile $T_{\tau(m,n)}$ centered at the point $(m,n)$ for each $(m,n) \in \Z^2$. 
We denote this tiling by $\tiling{\tau}$. 
Variations of these tilings were first studied for aesthetic reasons by S\'ebastien Truchet in the early 1700s \cite{Truchet}, and this version of tiles were first described by Smith and Boucher
 \cite{Smith87}. 

The arcs on the tiles of a Truchet tiling join to form a disjoint collection of simple curves in the plane. 
See Figure \ref{fig:renormalize}. We call these the {\em curves} of the tiling. Each curve is either closed or bi-infinite. A natural question
to ask is ``how prevalent are the closed curves?'' 
This was asked by Pickover for Truchet tilings which are random in the sense that for each $m,n \in \Z$, 
$\tau(m,n)$ is determined by the flip of a fair coin \cite{Pickover89}.

In this paper, we consider Truchet tilings that arise from functions $\tau_{\omega, \omega'}:\Z^2 \to \{\pm 1\}$ defined in terms of two functions $\Z \to \{\pm 1\}$,
$n \mapsto \omega_n$ and $n \mapsto \omega'_n$. These are the Truchet tilings determined by the function
\begin{equation}
\name{eq:tau omega}
\tau_{\omega, \omega'}(m,n)=\omega_m \omega'_n.
\end{equation}
An example of a portion of such a tiling is shown in Figure \ref{fig:renormalize}.
We will analyze these tilings with techniques coming from the theory dynamical systems. We will show that closed curves curves are highly prevalent in some families of tilings , while not so prevalent in other families.
For the tilings of the form $\tiling{\tau_{\omega, \omega'}}$, we will show that these tilings are renormalizable in the sense of dynamical systems.
Renormalization powers the strongest results in this paper. We explain the motivating connections to dynamical systems in the Motivation Section below. 

To make the notion of prevalence rigorous, observe that $\omega$ and $\omega'$ are naturally elements of the full two-sided shift on the alphabet $\{\pm 1\}$, denoted $\Omega_{\pm}=\Z^{\{\pm 1\}}$. 
The {\em shift map} $\sigma:\Omega_{\pm} \to \Omega_{\pm}$ is defined by $\big(\sigma(\omega)\big)_n=\omega_{n+1}$.   
We may think of choosing our $\omega$ and $\omega'$ at random according to some shift-invariant probability measures $\mu$ and $\mu'$ on $\Omega_{\pm}$.
We then choose an edge $e$ of the tiling by squares centered at the integer points (e.g., take $e$ to join $(\frac{-1}{2},\frac{1}{2})$ to $(\frac{1}{2},\frac{1}{2})$).
We ask ``what is $\mu \times \mu' \big(A)$ where $A$ is the collection of $(\omega, \omega') \in \Omega_{\pm}$ where
the curve through $e$ of the tiling $\tiling{\tau_{\omega,\omega'}}$ is closed?''
A simple argument shows that shift invariance of $\mu$ and $\mu'$ guarantees that this number is independent of the choice of $e$.

In general, we have the following result, which often prevents the tilings with a closed curve through $e$ from being full measure.
\begin{theorem}[Drift Theorem]
\name{thm:drift}
Suppose $\mu$ and $\mu'$ are shift-invariant probability measures on $\Omega_\pm$ satisfying
$$p=\int_{\Omega_\pm} \omega_0~d\mu(\omega) \and q=\int_{\Omega_\pm} \omega'_0~d\mu'(\omega').$$
Then the $\mu \times \mu'$ measure of the set of pairs $(\omega, \omega')$ such that 
the curve of $\tiling{\tau_{\omega,\omega'}}$ through edge $e$ is bi-infinite is at least $\max \{|p|, |q|\}$. 
\end{theorem}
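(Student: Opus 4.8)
The plan is to translate the combinatorics of a single curve into a deterministic dynamical system driven by $(\omega,\omega')$, to locate a drift whose mean is $p$, and to extract the lower bound from the pointwise ergodic theorem. First I would straighten each arc to the diagonal of its tile, so that the tile centered at $(m,n)$ carries a segment of slope $\tau_{\omega,\omega'}(m,n)=\omega_m\omega'_n$. A curve then becomes a diagonal path that alternately crosses vertical and horizontal grid edges, making a quarter turn in each tile. Writing the direction of travel as $(\epsilon_x,\epsilon_y)\in\{\pm1\}^2$, the slope condition is $\epsilon_x\epsilon_y=\omega_m\omega'_n$, and crossing an edge preserves the component that carries the curve across it while the other component is fixed by the tile just entered; checking the four cases gives the turning rule $\epsilon_{\mathrm{new}}=\epsilon_{\mathrm{carry}}\cdot\tau_{\omega,\omega'}(\text{tile entered})$.

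Fixing an orientation, let $c=\epsilon_x\omega'_i=\epsilon_y\omega_j$ be evaluated at any occupied tile $(j,i)$. The turning rule forces $c\in\{\pm1\}$ to be constant along the curve and yields the key structural fact: whenever the oriented curve sits in column $j$ its vertical velocity is $c\,\omega_j$, and whenever it sits in row $i$ its horizontal velocity is $c\,\omega'_i$. Letting $(m_k,n_k)$ be the tiles visited at the successive vertical-edge crossings, the increments telescope to
\[
n_K-n_0=c\sum_{k=0}^{K-1}\omega_{m_k},\qquad\text{and symmetrically}\qquad m_K-m_0=c\sum_{k=1}^{K}\omega'_{n_k}.
\]
The curve is closed exactly when this combinatorial orbit is periodic, which forces both coordinate sequences to return; hence if $n_k\to\pm\infty$ the curve is bi-infinite, and it suffices to bound below the probability that the vertical coordinate escapes.

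I would package the data at a vertical-edge crossing as a point $(\omega,\omega',\epsilon)$ of $\widetilde\Omega=\Omega_\pm\times\Omega_\pm\times\{\pm1\}$, recentered so the current tile is the origin, and let $\widetilde F$ be the macro-step advancing one crossing. The heart of the argument is that $\widetilde F$ preserves $\mu\times\mu'\times\nu$, with $\nu$ uniform on $\{\pm1\}$. To prove this I factor $\widetilde F=G_2\circ G_1$ into a vertical half-step $G_1$, which shifts only $\omega'$ by $\epsilon_y=\epsilon\,\omega_0\omega'_0$ and records the new direction, and a horizontal half-step $G_2$, which shifts only $\omega$. For $G_1$, conditioning on $(\omega,\omega')$ the uniform sign $\epsilon$ makes $\epsilon_y$ uniform on $\{\pm1\}$, so a one-line computation using $\sigma_*\mu'=\sigma^{-1}_*\mu'=\mu'$ shows the pair (shifted $\omega'$, recorded direction) is again $\mu'\times\nu$-distributed and independent of $\omega$; thus $G_1$ is measure preserving, and $G_2$ likewise by the symmetric computation. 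Getting this state-dependent shift to balance against the stationary measures is the step I expect to be the crux; the factorization into half-steps is what makes it go through.

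Since bi-infiniteness does not depend on orientation, the probability sought equals its value under $\mu\times\mu'\times\nu$. In recentered coordinates the vertical increment equals $c\cdot\omega_0$ with $c$ an $\widetilde F$-invariant sign, so by the pointwise ergodic theorem applied to $\widetilde F$ and the bounded observable $\omega_0$,
\[
\frac{n_K-n_0}{K}\longrightarrow c\,\overline{\Phi}\quad\text{a.e.},\qquad \int\overline{\Phi}\,d(\mu\times\mu'\times\nu)=\int_{\Omega_\pm}\omega_0\,d\mu=p,
\]
with $\overline{\Phi}\in[-1,1]$ (no ergodicity of $\widetilde F$ is needed, matching the hypothesis of mere shift-invariance). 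The elementary bound $|p|=\big|\int\overline{\Phi}\,\mathbf{1}_{\overline{\Phi}\neq0}\big|\le\int|\overline{\Phi}|\,\mathbf{1}_{\overline{\Phi}\neq0}\le P(\overline{\Phi}\neq0)$ then gives $P(\overline{\Phi}\neq0)\ge|p|$. As $\overline{\Phi}\neq0$ forces $n_K\to\pm\infty$ and hence a bi-infinite curve, the measure of bi-infinite curves is at least $|p|$. Running the identical argument on the horizontal coordinate—equivalently, invoking the $x\leftrightarrow y$ symmetry that exchanges $\omega,\omega'$ and $p,q$—gives the bound $|q|$ as well, so the measure is at least $\max\{|p|,|q|\}$.
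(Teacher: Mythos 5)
Your argument is correct, and it shares the paper's two structural pillars---your invariant sign $c=\epsilon_x\omega'_i=\epsilon_y\omega_j$ is exactly the paper's invariant function $M\big(\omega,\omega',(a,b)\big)=b\omega_0+a\omega'_0$ of Lemma \ref{lem:M}, and the invariance of $\mu\times\mu'\times\nu$ under your macro-step is Proposition \ref{prop:product measures} specialized to the first return of $\Phi$ to the horizontal directions (your half-step factorization is the same computation)---but it finishes by a genuinely different mechanism. The paper takes no limits: it partitions $X$ into the level sets $X_s=M^{-1}(s)$, computes the integral of the displacement vector $(a,b)$ over $X_s$ to be an explicit multiple of $(q,p)$, observes that points with stable periodic orbits contribute zero to this integral (statement (3) of Proposition \ref{prop:stable}), and bounds each coordinate of the displacement integral of any $\Phi$-invariant set by half the measure of that set; comparing these three facts yields the bound directly. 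You instead invoke the Birkhoff pointwise ergodic theorem for the observable $\omega_0$ along the macro-step, use the constancy of $c$ along orbits to convert a nonzero time average $\overline{\Phi}$ into linear escape of the row coordinate (hence a bi-infinite curve), and finish with the elementary estimate $|p|=\big|\int\overline{\Phi}\big|\le P(\overline{\Phi}\ne 0)$. A notable structural difference: the paper's split into the two level sets of $M$ is indispensable there, since the displacement integral over all of $X$ vanishes by symmetry, whereas you never partition the space---pointwise constancy of $c$ along each orbit is all you use. The trade-off: the paper's proof is finite and elementary, avoiding the ergodic theorem altogether; yours uses a heavier tool but extracts slightly more, namely that on a set of measure at least $\max\{|p|,|q|\}$ the curve escapes with positive linear speed, and it is the natural precursor to the strengthening via ergodic decomposition that the paper records as a corollary immediately after its proof.
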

The above result is independent from our renormalization arguments. We prove it and state a stronger version of the result in section \ref{sect:drift}. 

The main case of interest for this paper is when the measures $\mu$ and $\mu'$ are the stationary (shift-invariant) measures associated to some Markov chain with the state space $\{\pm 1\}$. The above theorem indicates that for such measures, the probability that the curve through $e$ is closed is less than one whenever $1$ and $-1$ occur with different probabilities. But, there is a one-parameter family of stationary measures associated to Markov chains with the property that $1$ and $-1$ occur with equal probability. The following theorem addresses these cases.

Fix real constants $p$ and $p'$ with $0<p,p'<1$. 
We describe a method of randomly choosing $\omega$ and $\omega'$. Choose $\omega_0, \omega'_0 \in \{\pm 1\}$ randomly according to the flip of a fair coin. Define the remaining values according to the rule that for all integers $n \geq 0$,
$\omega_{n+1}=\omega_n$ with probability $p$ and $\omega_{-n-1}=\omega_{-n}$ with probability $p$. Do the same for $\omega'$ with probability $p'$.

\begin{theorem}
\name{thm:random tilings}
Let $e$ be an edge of the square tiling of the plane as above. Fix $p$ and $p'$, and define $\omega$ and $\omega'$ randomly as above. With probability $1$, the
curve through $e$ of $\tiling{\tau_{\omega,\omega'}}$ is closed.
\end{theorem}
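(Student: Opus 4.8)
The plan is to show that, almost surely, the curve through $e$ stays in a bounded region of the plane, and then to upgrade boundedness to closedness. For the upgrade, note first that the tiling is locally finite (each unit tile carries two arcs), so any bounded subset of the plane meets only finitely many arcs; hence a bi-infinite curve, meeting infinitely many arcs, cannot be bounded, and ``bounded'' and ``closed'' are the only options. Moreover the rule for following the curve from one tile to the next is invertible, so if the forward orbit visits only finitely many (tile, heading) states it must revisit a state and therefore be periodic, i.e.\ closed. Consequently it suffices to prove that, almost surely, the sequence of tiles crossed by the forward curve from $e$ is bounded.

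Next I would extract the dynamics of curve-following, exploiting the product form $\tau_{\omega,\omega'}(m,n)=\omega_m\omega'_n$. Following the curve, one checks that it alternately makes a horizontal and a vertical unit step, so its state is recorded by a column--row pair $(m_i,r_i)$ together with its two headings. Writing $v_i=\pm1$ for the vertical heading and $h_i=\pm1$ for the horizontal heading, a direct computation from the two tile types shows that the outgoing heading in each new direction equals minus the incoming heading in the old direction times the tile sign. Telescoping this relation across the alternating steps, and using $(\omega'_r)^2=(\omega_m)^2=1$, the $\omega$-- and $\omega'$--dependence decouples: there is a sign $C=\pm1$ fixed by the initial edge $e$ with
\[
v_i=C\,\omega_{m_i},\qquad h_i=-C\,\omega'_{r_i},
\]
so that $m_{i+1}=m_i-C\,\omega'_{r_i}$ and $r_{i+1}=r_i+C\,\omega_{m_{i+1}}$. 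Geometrically the curve is a monotone staircase that reverses its vertical direction exactly when its column crosses a sign change of $\omega$, and reverses its horizontal direction exactly when its row crosses a sign change of $\omega'$: a billiard-type path reflecting off the grid of sign-change lines of $\omega$ and $\omega'$.

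I would then reduce boundedness of the staircase to recurrence of the two $\pm1$ partial-sum walks $X_n=\sum_{k=1}^{n}\omega_k$ and $Y_n=\sum_{k=1}^{n}\omega'_k$. The heuristic is that while the curve runs straight through a block on which $\omega$ is constant, its horizontal coordinate tracks $Y$, and symmetrically its vertical coordinate tracks $X$; so if each walk returns infinitely often to its starting value the curve cannot drift off, whereas a transient walk would let it escape. Making this precise is exactly the role of renormalization: collapsing the maximal constant runs of $\omega$ and of $\omega'$ induces an operator $\sR$ on the space of tilings that preserves the closed/bi-infinite dichotomy of the curve through $e$ while strictly reducing its combinatorial size, and under iteration closedness of the original curve becomes equivalent to recurrence of the induced run-length dynamics, hence to recurrence of $X$ and $Y$.

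Finally, for the probabilistic conclusion: under the sampling in the statement, $\omega$ and $\omega'$ are stationary two-state Markov chains with $\int\omega_0\,d\mu=\int\omega'_0\,d\mu'=0$ (so Theorem \ref{thm:drift} gives only the vacuous lower bound $0$ and presents no obstruction) and with geometrically decaying correlations because $0<p,p'<1$. Hence $X$ and $Y$ are mean-zero lattice walks with linearly growing variance, and such walks are recurrent almost surely; this is precisely where $0<p,p'<1$ enters, since a degenerate $p\in\{0,1\}$ would make a walk periodic or constant. Recurrence of both walks forces the staircase to remain bounded, hence the curve to be closed, with probability one. The main obstacle I anticipate is the reduction in the previous paragraph: converting recurrence of the two decoupled walks into boundedness of the genuinely coupled staircase. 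A naive invariant of the form $m_i+C\,Y_{r_i}$ fails---it is preserved along upward steps but acquires an error at each downward crossing of an $\omega'$ sign change (an off-by-one in the run indexing)---so I expect to route the argument through the operator $\sR$ together with a recurrence statement for the random run-length sequences, rather than through a direct conserved quantity.
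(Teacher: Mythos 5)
Your opening reductions are sound: closed $\Leftrightarrow$ bounded (local finiteness plus invertibility of curve-following), and the decoupling $v_i=C\,\omega_{m_i}$, $h_i=-C\,\omega'_{r_i}$ is exactly the invariant function of Lemma \ref{lem:M}. The genuine gap is the step you yourself flag as the ``main obstacle'': the claim that recurrence of the two partial-sum walks $X$ and $Y$ forces the coupled staircase to stay bounded. That implication is false as stated. Take $\omega=(++--)^\infty$ and $\omega'=\omegaalt$: both partial-sum walks are bounded, hence return to their starting value infinitely often, yet the curve entering a pair of adjacent columns with $\omega_{m-1}=\omega_m=+1$ zigzags upward between those two columns forever (the vertical heading is $+1$ in both columns while the horizontal heading alternates with the rows), so it is bi-infinite. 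The point is not that $\omegaalt$ is a measure-zero configuration; the point is that closedness is not a function of the marginal recurrence of $X$ and $Y$ but of their \emph{joint} structure---in the corner-percolation picture the curve closes only when excursions of the two walks with matching heights can be found around the origin, and two perfectly recurrent height sequences can leapfrog each other forever. So any correct proof along your lines must formulate and verify a joint matching criterion almost surely for these Markov chains. Nothing in your proposal does this, and your fallback---that iterating the collapsing operator makes closedness ``equivalent to recurrence of the induced run-length dynamics, hence to recurrence of $X$ and $Y$''---is asserted, not proved, and is not what renormalization delivers.

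What renormalization actually gives is Corollary \ref{cor:limit}: a curve is closed iff it vanishes after finitely many collapses, whence $\nu(\NS)=\lim_{n\to\infty}\nu(\sO_n)$; making this limit equal zero is a quantitative problem, not a soft recurrence statement. Indeed the renormalization here moves the environment \emph{away} from recurrence: by Theorem \ref{thm:img}, $\mu_p\circ c^{-1}=p\,\mu_{1/(2-p)}$, so the rescaled renormalized measures converge to $\mu_1$, supported on constant (maximally transient) sequences, and the paper notes explicitly that the renormalization dynamics are not recurrent. The theorem is then won by a competition: the mass of $k$-times-renormalizable pairs decays like $\frac{mn}{(m+k)(n+k)}$ while the return-time cocycle (the matrices $M_{p,q}$ of Proposition \ref{prop:matrices random case}) grows, and the product is shown to tend to zero only by the explicit limit of Lemma \ref{lem:limit for random cases} together with the $\gamma_k$ induction of Section \ref{sect:evaluating the limit} (Theorem \ref{thm:random tilings2}). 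Your proposal contains no substitute for either this computation or a matching-excursion argument, so the central step of the proof is missing.
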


\begin{figure}
\begin{center}
\includegraphics[width=5.5in]{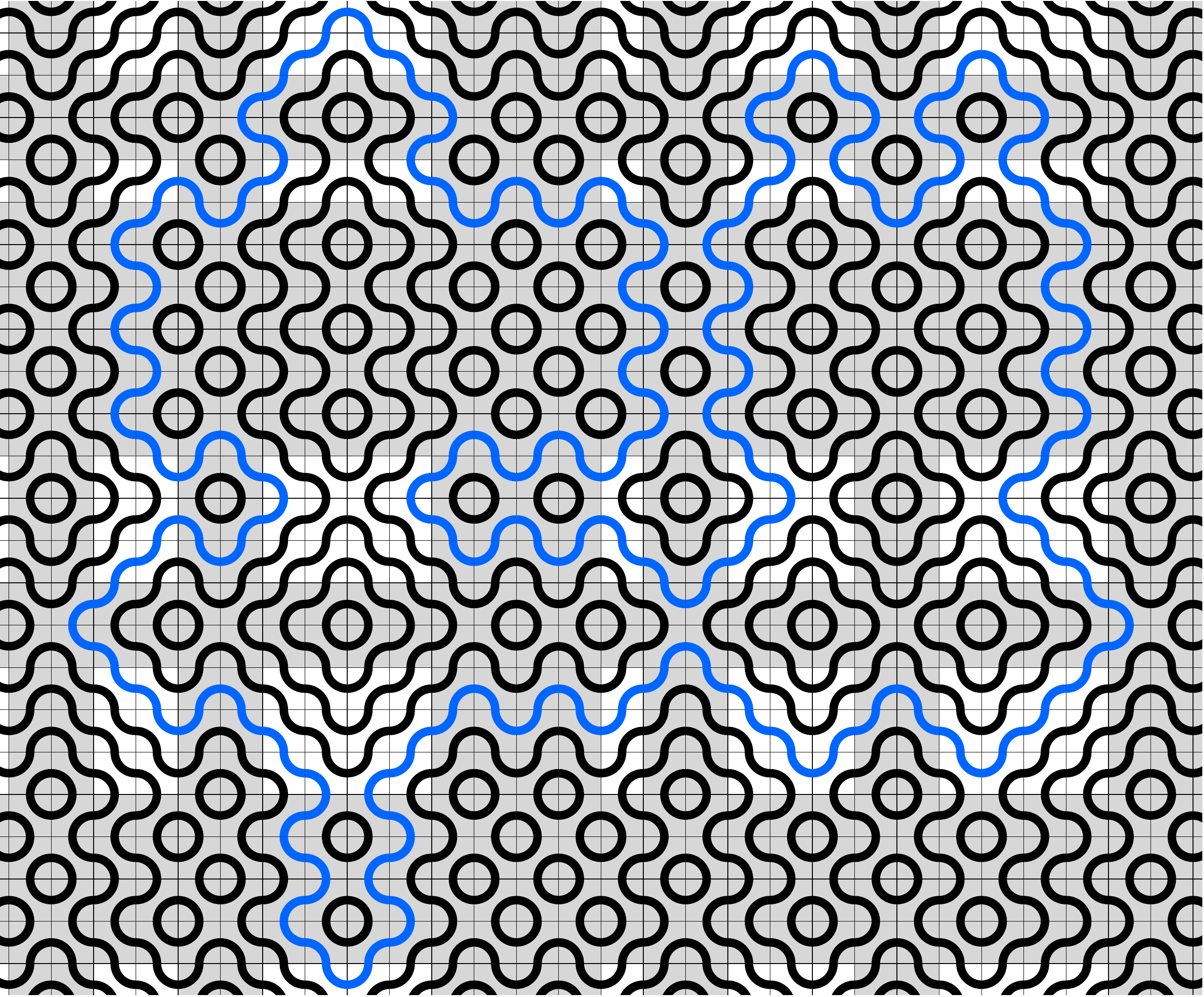}
\caption{A single curve has been highlighted in a tiling determined from $\omega$ and $\omega'$ chosen at random as described above Theorem \ref{thm:random tilings} for some $p$ and $p'$.}
\name{fig:renormalize}
\end{center}
\end{figure}

The above theorem is proven with the renormalization techniques mentioned above. Much of this technique applies in general to pairs of shift-invariant measures $\mu$ and $\mu'$. 

We will informally explain how this notion of renormalization works. Fix $\omega, \omega' \in \Omega_\pm$ and consider the tiling $[\tau_{\omega, \omega'}]$ described by Equation \ref{eq:tau omega}. 
Generally, there is a collection of rows and columns of the tiling following statements hold, letting $Y$ be the union of all rows and columns contained in the collection. (An example of $Y$ consists of the union of gray squares in figure \ref{fig:renormalize}.)
\begin{enumerate}
\item A curve of the tiling is contained entirely in $Y$ if and only if the curve
closes up after visiting only four squares.
\item All other curves of the tiling intersect both $Y$ and its compliment.
\item When a curve enters $Y$ from the left (respectively, the right) through a vertical edge $e$ in $\partial Y$, it exits through the nearest vertical edge in $\partial Y$ directly to the right (resp. the left) of $e$.
\item When a curve enters $Y$ from below (resp. above) through a horizontal edge $e$ in $\partial Y$, it exits through the nearest horizontal edge in $\partial Y$ directly above (resp. below) $e$.
\end{enumerate}
Assuming these statements, we can then form a new tiling by collapsing all the columns contained in $Y$, identifying the pairs of edges mentioned in statement (3). Then we collapse the rows in $Y$, identifying the pairs of edges mentioned in statement (4) in a similar manner. So from $[\tau_{\omega, \omega'}]$ we have obtained a new Truchet tiling, say $[\tau_{\eta, \eta'}]$. Now let $\gamma$ be a curve in the tiling $[\tau_{\omega, \omega'}]$ which intersects both $Y$ and its compliment. Consider the collection of arcs of $\gamma$ which are contained in the compliment of $Y$.
Because of statements (3) and (4), the collapsing process takes these arcs and reassembles them to make a new
connected curve $\gamma'$ in the tiling $[\tau_{\eta, \eta'}]$.
The first two statements imply that we have removed all loops of length $4$, and reduced the length of all closed loops.
So, a loop is closed if and only if it vanishes under some finite number of applications of this collapsing process.

Under certain natural assumptions on $\omega$ and $\omega'$, the resulting tiling is determined by another pair $\eta=c(\omega) \in \Omega_\pm$ and $\eta'=c(\omega') \in \Omega_\pm$. This map $c$ is well defined on some Borel subset $C \subset \Omega_\pm$, and we call 
$c:C \to \Omega_\pm$ the {\em collapsing map}. The collapsing map acts on shift invariant probability measures $\mu$ via
$\mu \mapsto \frac{1}{\mu(C)} \mu \circ c^{-1}$. 

We can immediately observe for instance that $\mu$-a.e. curve is closed (in the sense of the above theorems),
if and only if $\mu \circ c^{-1}$-a.e. curve is closed. Typically much more than this is true.
We define $\omegaalt \in \Omega_\pm$ by $\omegaalt_n=(-1)^n$. 
Assuming that for all integers $n>0$, the measures $\mu \circ c^{-n}$ and $\mu' \circ c^{-n}$ never has an atom at $\omegaalt$, there is a limiting formula for the probability that a curve is closed in terms of the measures 
$\mu \circ c^{-n}$ and $\mu' \circ c^{-n}$ and the behavior of a cocycle acting on a function space.
(See Corollary \ref{cor:approach}.) In the case of the stationary measures associated to a Markov chain
as implicitly discussed in Theorem \ref{thm:random tilings}, the action of this cocycle leaves invariant
a finite dimensional subspace. Understanding the action of this cocycle restricted to this subspace allows us to prove this Theorem \ref{thm:random tilings}.

\subsection{Motivation}

The original motivation for studying Truchet tilings here arose from connections between Truchet tilings and certain
low complexity dynamical systems, such as interval exchange maps and polygon exchange maps. In the paper \cite{H12}, the connection between Truchet tilings and a family of polygon exchange maps will be discussed in depth.
We will briefly explain this connection here because it motivated this work.

A {\em polygon exchange map} is a map $X \to X$, where $X$ is a union of polygons, which is piecewise continuous on polygonal pieces, acts as a translation on each piece, and has an image of full area in $X$. (There may be some ambiguity of definition on the boundaries of the pieces.) 

For a more specific example, let $F$ be a finite set, and consider the product $X=\bbT^2 \times F$ which consists of $\# F$ copies of $\bbT^2$. 
A polygon exchange map on $X$, $T:X\to X$, is a decomposition of $X$ into polygonal pieces $P_1, \ldots, P_n \subset X$,
a choice of elements $\v_1, \ldots, \v_n \in \bbT^2$ and a choice of elements $c_1, \ldots, c_n \in F$ so that the image of the map
$$T(\x, a)=(\x+\v_i,c_i) \textrm{ whenever $(\x,a) \in P_i$}$$
has full area.  We define the subgroup $G=\langle \v_1, \ldots, \v_n \rangle \subset \bbT^2$. 
Fixing a $\y \in \bbT^2$ and an $a \in F$, observe that the orbit of a point $(\y, a)$ is contained in the set
$(\y+G) \times F$. Given any $\y \in \bbT^2$, we define an embedding $\epsilon_{\y}:G \times F \to X$ via
$(\g,a) \mapsto (\y+\g,a)$. For generic $\y$, 
we can pull back the action of $T$ to an action on $G \times F$. We define $\Psi_{\y}:G \times F \to G \times F$ by $\Psi= \epsilon_\y^{-1} \circ T \circ \epsilon_\y$. The {\em arithmetic graph} associated to $T$ and $\y$, $\Gamma(T,\y)$ is the 
graph whose vertices are the points in $G \times F$, and for which an edge is drawn between $(\g,a)$ and $\Psi(\g,a)$
for all $(\g,a) \in G \times F$. Note that the curve of the arithmetic graph through $(\g,a)$
represents the orbit of the point $(\y+\g,a)$ under $T$. So for instance, the point $(\y+\g,a)$ is periodic
if and only if the curve passing through $(\g,a)$ in $\Gamma(T,\y)$ is closed.

The arithmetic graph and similar constructions have been a useful tool for proving results about low complexity dynamical systems which require a detailed understanding orbits. 
For instance, in \cite[Proposition 13]{PLV08} it is shown that the analog of an arithmetic graph for a certain interval exchange consists of only finitely many curves. In \cite{S07} Schwartz coined the term arithmetic graph, and used the arithmetic graph to resolve a long-standing question of Neumann, ``are there outer billiards systems which have unbounded orbits?''
In later work, Schwartz showed that a certain first return maps of outer billiards in polygonal kites is related to polyhedral exchanges by a dynamical compactification construction \cite[The Master Picture Theorem]{S09}. This construction relates the arithmetic graph used in outer billiards with the arithmetic graph of a polygon exchange mentioned above.
The arithmetic graph also played a primary role in \cite{Schwartz10}.

A powerful tool for understanding the dynamical systems mentioned above has been renormalization. For a polygon exchange map $T:X \to X$, for instance, we can construct a new polygon exchange map by considering the return map to a polygon or union of polygons. Repeatedly applying this trick can be useful for proving results about the long term behavior of the system. This is commonly used to answer ergodic theoretic questions about interval exchange maps \cite{MT}.
For a specific polygon exchange map, renormalization has been used to show that the set of points with periodic orbits have full measure \cite{AKT01}. This is similar to our goals here. 

This paper came out of an attempt find a simple example where renormalization can be understood in terms of the arithmetic graph. The polygon exchange maps appearing in \cite{S09} appear complicated, but the associated arithmetic graphs have many nice properties. For instance, the connected components of these arithmetic graphs form a collection of simple curves. Truchet tilings represent a ways to force this behavior independent of constructions involving outer billiards. Moreover, we can obtain many Truchet tilings as instances of arithmetic graphs of polygon exchange maps \cite{H12}. In these cases, the renormalization of tilings described in the introduction corresponds to a renormalization (first return map) of the associated polygon exchange map. 
This paper came out of the realization that, in this particular case, by understanding the action of renormalization on the arithmetic graph, we can generalize the renormalization of a family of polygon exchange maps to a renormalization scheme applicable to a more general class of dynamical systems. These are the systems 
described in Section \ref{sect:truchet shift}. 

Finally, it should be noted that the original motivation for studying Truchet Tilings by Truchet \cite{Truchet} and Smith \cite{Truchet} was aesthetic. These motivations continue today.
See for instance, \cite{LR06}, and \cite{C08}. More general families of curves
associated to tilings have been considered. For instance, \cite{OC99} considers similar curves in Penrose tilings. 

Aside from the above motivations, Truchet tilings have also played role in understanding a variant of the cellular automata known as Langton's Ant \cite{GPST95}.

\subsection{Outline}
In Section \ref{sect:Truchet}, we explain how to think of the space of Truchet tilings as a dynamical system. In Section \ref{sect:truchet shift}, we restrict attention to the case of tilings determined by a pair of elements of the shift space $\Omega_\pm$ as in Equation \ref{eq:tau omega}. We also provide the necessary background on shift spaces
and shift invariant measures here. In Section \ref{sect:drift}, we prove \hyperref[thm:drift]{the Drift Theorem}
and a stronger variant. Section \ref{sect:renormalization} states and proves the renormalization results which apply to many pairs of shift-invariant measures. This culminates in Section \ref{sect:renormalization2}, which explains
the renormalization process and constructs the cocycle alluded to in the introduction. Finally, Section \ref{sect:random} develops these renormalization theorems in the context of the measures relevant to Theorem \ref{thm:random tilings}. Section \ref{sect:evaluating the limit} does the necessary calculation to prove this theorem.

\section{Dynamics on Truchet tilings}
\name{sect:Truchet}
Consider the unit square centered at the origin with horizontal and vertical sides.
An {\em inward normal} to the square is a unit vector which is based at a midpoint of an edge and is pointed into the square. See below
\begin{center}
\includegraphics[height=0.75in]{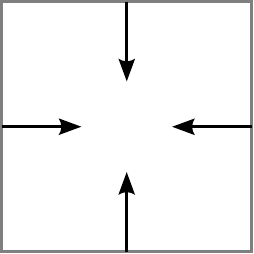}
\end{center}
We do not keep track of the location at which the normal is places, so the four inward normals are just the vectors $(1,0)$,
$(0,1),$
$(-1,0)$ and
$(0,-1)$. We use $N$ to denote the collection of inward normals.


Let $\sT$ be the collection of maps $\Z^2 \to \{\pm 1\}$. 
We will define a dynamical system on $\sT \times N$. Choose $(\tau, \v) \in \sT \times N$. The inward normal $\v \in N$ is a vector based at a midpoint of the square at the origin pointed inward. The Truchet tiling $[\tau]$ determined by $\tau$ places the tile $T_{\tau(0,0)}$ at the origin.
We drag the vector inward along an arc of this tile keeping the vector tangent to the arc. After a quarter turn, we end up as a vector pointed out of the square centered at the origin.
So, the vector points into a square adjacent to the square at the origin. We translate the whole tiling so that this new square becomes centered at the origin.

Formally, this is the dynamical system $\Phi_0:\sT \times N \to \sT \times N$ given by 
\begin{equation}
\name{eq:Phi}
\Phi_0 \big(\tau, (a,b)\big)=\big(\tau \circ S_{s(b,a)}, s(b,a)\big),
\end{equation}
where $s=\tau(0,0) \in \{\pm 1\}$ and $S_{s(b,a)}$ is the translation of the plane by the vector
$s(b,a)$. 

\section{Truchet tiling spaces from shift spaces}
\name{sect:truchet shift}
In this paper we will concentrate on Truchet tilings which arise from a pair of bi-infinite sequences of elements of the set $\{\pm 1\}$. 
As in the introduction, we will use notation from the world of shift spaces to denote these bi-infinite sequences. Namely, an element $\omega \in \Omega_\pm$ is a bi-infinite sequence
of elements of $\{\pm 1\}$. For $n \in \Z$, we use $\omega_n$ to denote the $n$-th element of the sequence $\omega$. 

Given $\omega, \omega'\in \Omega_\pm$, we obtain a function 
$\tau_{\omega, \omega'}:\Z^2 \to \{\pm 1\}$ as in equation \ref{eq:tau omega} of the introduction. 
In this paper, we will be interested in studying the dynamics of $\Phi_0$ on the collection of all $\tau_{\omega, \omega'}$. 
This collection of tilings is $\Phi_0$-invariant and admits a natural renormalization procedure as we explain in section \ref{sect:renormalization}. In this section, we reveal some of the more basic structure of the map $\Phi_0$
restricted to these types of tilings.

\subsection{Background on shift spaces}
\name{sect:topology}
We briefly recall some basic facts about two-sided shift spaces here. For further background see \cite{LindMarcus}, for instance.

Let $\sA$ be a finite set called an {\em alphabet}. The set $\sA^\Z$ is called the {\em full (two-sided) shift} on $\sA$. 

For integers $m<n$, let $f:\{m,m+1, \ldots, n\} \to \sA$ be an arbitrary function. 
The {\em cylinder set} determined by $f$ is the set
\begin{equation}
\name{eq:cylinder set}
\sC(f)=\{\omega \in \sA^\Z~:~\omega_i=f(i) \textrm{ for all $i=m, \ldots, n$}.\}
\end{equation}
We equip $\sA^\Z$ with the topology generated by the cylinder sets. This is the coarsest topology which makes each cylinder set open.
Observe that the cylinder sets are also closed. An equivalent description of this topology is obtained by considering elements of $\sA^\Z$
as functions $\Z \to \sA$. From this point of view, this is the topology of pointwise convergence on compact subsets of $\Z$, where
$\sA$ is given the discrete topology.
With this topology, the set $\sA^\Z$ is homeomorphic to a Cantor set.

The {\em shift map} $\sigma:\sA^\Z \to \sA^\Z$ is the homeomorphism of the full shift space defined by
\begin{equation}
\name{eq:shift}
[\sigma(\omega)]_n=\omega_{n+1}.
\end{equation}
A {\em shift space} $\Omega$ is a closed, shift-invariant subset of a full shift space. We endow $\Omega$ with the subspace topology.

A {\em shift-invariant measure} on a shift space $\Omega$ is a Borel measure $\mu$ satisfying 
$$\mu \circ \sigma^{-1}(A)=\mu(A) \quad \textrm{for all Borel subsets $A \subset \Omega$}.$$
Full shift spaces admit a plethora of shift-invariant probability measures.

\subsection{Tiling spaces from shift spaces}
\name{sect:tilings from shift spaces}
As before, we let $\Omega_\pm=\{\pm 1\}^\Z$ denote the full shift space on the alphabet $\{\pm 1\}$.  Given a pair of elements $\omega, \omega' \in  \Omega_\pm$, we 
obtain a map $\tau_{\omega,\omega'}: \Z^2 \to \{\pm 1\}$ as in equation \ref{eq:tau omega}. This function in turn determines a tiling $\tiling{\tau_{\omega, \omega'}}$ as described in the introduction.
Note that the map $(\omega, \omega') \to \tau_{\omega, \omega'}$ is two-to-one, with $\tau_{\omega,\omega'}=\tau_{-\omega,-\omega'}$, where
$-\omega$ denotes the element of $\Omega_\pm$ given by $(-\omega)_n=-\omega_n$. 

The collection $\big \{\tau_{\omega, \omega'}~:~\omega, \omega' \in \Omega_\pm\big\}$ is easily seen to be translation invariant, and therefore
the set $\big \{\tau_{\omega, \omega'}~:~\omega, \omega' \in \Omega_\pm\big\} \times N$ is invariant under $\Phi_0$. There is a natural lift of the action
of $\Phi_0$ on this set to an action on the set $X=\Omega_\pm \times \Omega_\pm \times N$ given by 
\begin{equation}
\name{eq:Phi2}
\Phi\big(\omega, \omega', (a,b)\big)=\big(\sigma^{sb} (\omega), \sigma^{sa} (\omega'), s(b,a)\big),
\end{equation}
with $s=\omega_0 \omega'_0$. (Here, $\sigma^{-1}$ denotes the inverse of the shift map defined by $\big(\sigma^{-1}(\omega)\big)_n=\omega_{n-1}$.)
We call $\Phi$ a lift of $\Phi_0$ because if $\Phi(\omega,\omega',\v)=(\eta, \eta',\v')$ then $\Phi_0(\tau_{\omega,\omega'},\v)=(\tau_{\eta,\eta'},\v')$.
Observe that the inverse of $\Phi$ is given by 
\begin{equation}
\Phi^{-1}\big(\omega, \omega', (a,b)\big)=\big(\sigma^{-a} (\omega), \sigma^{-b} (\omega'), \omega_{-a} \omega'_{-b}(b,a)\big).
\end{equation}
We now make some preliminary observations about $\Phi$. 

\begin{proposition}
Let $\Omega, \Omega' \subset \Omega_\pm$ be shift spaces. The set $\Omega \times \Omega' \times N$ is a closed $\Phi$-invariant subset of $\Omega_\pm \times \Omega_\pm \times N$. 
\end{proposition}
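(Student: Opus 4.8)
The plan is to verify two essentially independent claims: that $\Omega \times \Omega' \times N$ is closed in $\Omega_\pm \times \Omega_\pm \times N$, and that it is carried onto itself by $\Phi$. The first is purely topological and routine; the second is a short combinatorial check that uses only the shift-invariance of $\Omega$ and $\Omega'$ together with the symmetry of the set $N$ of inward normals. No renormalization machinery is needed.

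For closedness, I would first note that $N = \{(1,0),(0,1),(-1,0),(0,-1)\}$ is a finite set, hence discrete, compact, and in particular closed. By the definition of a shift space recalled in Section \ref{sect:topology}, each of $\Omega$ and $\Omega'$ is a closed subset of $\Omega_\pm$. Since a finite product of closed sets is closed in the product topology, $\Omega \times \Omega' \times N$ is closed.

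For $\Phi$-invariance, take any $(\omega, \omega', (a,b)) \in \Omega \times \Omega' \times N$ and set $s = \omega_0 \omega'_0 \in \{\pm 1\}$, so that by Equation \ref{eq:Phi2}
$$\Phi\big(\omega, \omega', (a,b)\big) = \big(\sigma^{sb}(\omega), \sigma^{sa}(\omega'), s(b,a)\big).$$
Because $\Omega$ is shift-invariant (so $\sigma(\Omega)=\Omega$, and hence every integer power of $\sigma$ preserves $\Omega$), we have $\sigma^{sb}(\omega) \in \Omega$, and likewise $\sigma^{sa}(\omega') \in \Omega'$. It remains to see that $s(b,a) \in N$: the coordinate swap $(a,b) \mapsto (b,a)$ permutes the four elements of $N$, and $N$ is symmetric under negation, so multiplication by $s = \pm 1$ again lands in $N$. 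This proves $\Phi(\Omega \times \Omega' \times N) \subseteq \Omega \times \Omega' \times N$. To upgrade the inclusion to equality I would run the identical argument on the explicit inverse
$$\Phi^{-1}\big(\omega, \omega', (a,b)\big) = \big(\sigma^{-a}(\omega), \sigma^{-b}(\omega'), \omega_{-a}\omega'_{-b}(b,a)\big),$$
which by the same reasoning also maps $\Omega \times \Omega' \times N$ into itself; combining the two inclusions (and using that $\Phi$ is a homeomorphism) yields $\Phi(\Omega \times \Omega' \times N) = \Omega \times \Omega' \times N$.

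There is no real obstacle here; the only point requiring attention is the bookkeeping of the action of $\Phi$ on the $N$-coordinate, namely confirming that the coordinate-swap $(a,b) \mapsto (b,a)$ followed by multiplication by $s \in \{\pm 1\}$ never leaves $N$. This is immediate from the explicit four-element description of $N$, so the whole statement reduces to the definition of a shift space as a closed, shift-invariant set, plus the evident symmetry of the four inward normals under coordinate-swap and negation.
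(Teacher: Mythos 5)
Your proof is correct and is essentially the paper's own argument: the paper simply states that the proposition ``trivially follows from the definitions,'' and your write-up is precisely that routine verification (closedness from the definition of a shift space plus finiteness of $N$, and invariance by reading off Equation \ref{eq:Phi2} and its inverse). Nothing in your treatment diverges from what the paper intends, and your attention to the $N$-coordinate bookkeeping and to invariance under $\Phi^{-1}$ is sound.
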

The above proposition trivially follows from the definitions. 

\begin{proposition}
\name{prop:product measures}
Suppose $\mu$ and $\mu'$ are shift invariant probability measures on $\Omega$ and $\Omega'$, respectively. Let $\mu_N$ be the discrete probability measure
on $N$ so that $\mu_N(\{\v\})=\frac{1}{4}$ for each $\v \in N$. Then $\mu \times \mu' \times \mu_N$ is a $\Phi$-invariant probability measure on $\Omega \times \Omega' \times N$.
\end{proposition}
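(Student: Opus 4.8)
The plan is to verify the two assertions separately: that $\nu := \mu\times\mu'\times\mu_N$ is a probability measure, and that it is $\Phi$-invariant. The first is immediate, since a product of probability measures is again a probability measure. For invariance I would exploit that $\Phi$ is a homeomorphism, having the explicit continuous inverse recorded above; consequently it suffices to check that $\nu$ satisfies $\nu\big(\Phi(E)\big) = \nu(E)$ for every $E$ in a $\pi$-system generating the Borel $\sigma$-algebra of $\Omega\times\Omega'\times N$, and then to invoke the uniqueness of measures agreeing on such a family. The natural $\pi$-system is the collection of sets $E = B\times B'\times\{\v\}$ with $B\subset\Omega$ and $B'\subset\Omega'$ Borel and $\v=(a,b)\in N$; these form a $\pi$-system (intersections over distinct normals are empty), generate the full Borel $\sigma$-algebra since $N$ is finite, and their finite union is all of $X$.

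The main subtlety is that $\Phi$ is \emph{not} simply a product of shift maps: the shift exponents and the outgoing normal both depend on the sign $s=\omega_0\omega'_0$, so $\Phi$ is defined piecewise. To handle this I would split $B\times B'$ according to this sign, writing $A_+ = (B\times B')\cap\{\omega_0\omega'_0 = 1\}$ and $A_- = (B\times B')\cap\{\omega_0\omega'_0 = -1\}$. On $A_+\times\{\v\}$ the map $\Phi$ acts by $(\omega,\omega')\mapsto(\sigma^{b}\omega,\sigma^{a}\omega')$ and sends the normal to $(b,a)$, while on $A_-\times\{\v\}$ it acts by $(\omega,\omega')\mapsto(\sigma^{-b}\omega,\sigma^{-a}\omega')$ and sends the normal to $(-b,-a)$. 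The key observation is that since $\v\neq\0$ the two target normals $(b,a)$ and $(-b,-a)$ are distinct, so the images of the two pieces lie in disjoint $N$-fibers and their measures may be computed independently and then added.

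It remains to compute each contribution. Shift invariance of $\mu$ and $\mu'$ yields that $\mu\times\mu'$ is invariant under $\sigma^{k}\times\sigma^{l}$ for every $k,l\in\Z$; since each such map is a homeomorphism it preserves the $\mu\times\mu'$-measure of Borel sets, whence $[\mu\times\mu']\big((\sigma^{b}\times\sigma^{a})(A_+)\big) = [\mu\times\mu'](A_+)$ and likewise for $A_-$. Because each target fiber carries weight $\mu_N(\{\cdot\}) = \tfrac14$, I would then obtain
\[
\nu\big(\Phi(E)\big) = \tfrac14[\mu\times\mu'](A_+) + \tfrac14[\mu\times\mu'](A_-) = \tfrac14[\mu\times\mu'](B\times B') = \nu(E),
\]
using $A_+\sqcup A_- = B\times B'$. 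The hardest part is precisely the bookkeeping of the previous paragraph: correctly tracking how the sign $s$ partitions the domain and how the two pieces are routed into separate normal-fibers. Once that is set up, shift invariance together with the uniform weight on $N$ makes the computation collapse, and a Dynkin $\pi$-$\lambda$ argument upgrades the identity from the generating $\pi$-system to all Borel sets, completing the proof.
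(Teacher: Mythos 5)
Your proof is correct and follows essentially the same route as the paper: both verify invariance on product sets $A\times B\times\{\v\}$ using shift invariance of $\mu,\mu'$ and the uniform weights of $\mu_N$, the only cosmetic difference being that the paper refines the generating sets so that the sign $s=\omega_0\omega_0'$ is constant on each one, whereas you split a general product set into the pieces $A_\pm$ inside the computation and add the two contributions.
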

\begin{proof}
It is sufficient to show $\Phi$-invariance
of $\nu=\mu \times \mu' \times \mu_N$ on sets of the form $A \times B \times \{\v\}$, where $\v \in N$ and $A \subset \Omega$ and $B \subset \Omega'$ are Borel sets
chosen so that the product $s=\omega_0 \omega'_0$ is independent of the choice of $\omega \in A$ and $\omega' \in B$. 
Then by definition of $\Phi$, shift invariance of the measures $\mu$ and $\mu'$, and the permutation invariance of $\mu_N$,
$$\nu \circ \Phi(A \times B \times \{(a,b)\})=\nu\big(\sigma^{sb} (A) \times \sigma^{sa} (B) \times \{s(b,a)\}\big)=
\nu(A \times B \times \{(a,b)\}).$$
\end{proof}

\subsection{Periodic orbits}
\name{sect:periodic orbits}
Suppose $(\omega, \omega',\v) \in X$ satisfies $\Phi^n(\omega, \omega',\v)=(\omega, \omega',\v)$. We say $(\omega, \omega',\v)$ has an {\em stable periodic orbit of period $n$} if $n$ is the smallest number for which there are open neighborhoods $U$ and $U'$ of $\omega$ and $\omega'$ respectively for which each $(\eta, \eta', \v) \in U \times U' \times \{\v\}$ satisfies $\Phi^n(\eta, \eta',\v)=(\eta, \eta',\v)$. The following proposition characterizes the points with stable periodic orbits.

\begin{proposition}[Stability Proposition]
\name{prop:stable}
Let $(\omega, \eta, \v) \in X$, and use $(\omega^k, \eta^k, \v^k)$ to denote $\Phi^k(\omega, \omega', \v)$.
The following statements hold.
\begin{enumerate}
\item If $(\omega, \omega',\v) \in X$ has a stable periodic orbit of period $n$, then it is also (least) period-$n$ under $\Phi$ in the usual sense.
\item $(\omega, \omega',\v) \in X$ has a stable periodic orbit if and only if the curve of the tiling $[\tau_{\omega, \omega'}]$ passing through the normal $\v$ to the square centered at the origin is closed. In this case, the period of $(\omega, \omega',\v)$ is the number of squares the associated closed curve of the tiling intersects, counting multiplicities.
\item $(\omega, \omega',\v) \in X$ has a stable periodic orbit of period $n$ if and only if 
$n$ is the smallest positive integer for which both $\Phi(\omega, \omega',\v)=(\omega, \omega',\v)$ and 
$\sum_{i=0}^{n-1} \v^i=(0,0)$. 
\end{enumerate}
\end{proposition}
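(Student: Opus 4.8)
The plan is to reduce all three parts to a single explicit formula for $\Phi^n$ together with one elementary fact about the shift topology. First I would unwind the definition of $\Phi$. Since $\Phi(\omega,\omega',(a,b))=\big(\sigma^{sb}(\omega),\sigma^{sa}(\omega'),s(b,a)\big)$ and the new normal $s(b,a)=(sb,sa)$ is exactly $\v^{i+1}$, the $i$-th step shifts the first coordinate by $(\v^{i+1})_x$ and the second by $(\v^{i+1})_y$. Because shifts add under composition, after $n$ steps
\[
\Phi^n(\omega,\omega',\v)=\big(\sigma^{M_n}(\omega),\ \sigma^{M'_n}(\omega'),\ \v^n\big),\qquad (M_n,M'_n)=\sum_{i=1}^{n}\v^i=:P_n.
\]
Two remarks make this usable. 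Geometrically, $P_n$ is the center, in the original tiling, of the square reached after $n$ steps, so $P_n=(0,0)$ says the walk has returned to the origin square. Combinatorially, every index read off $\omega$ or $\omega'$ during the first $n$ steps lies in $[-n,n]$, so the entire finite trajectory --- all normals $\v^i$ and partial shifts $(M_i,M'_i)$ for $i\le n$ --- depends only on the coordinates $\omega_j,\omega'_j$ with $|j|\le n$, and hence is constant on a cylinder neighborhood $U\times U'$ of $(\omega,\omega')$.

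The heart of the argument is the characterization I would prove next: $\Phi^n(\eta,\eta',\v)=(\eta,\eta',\v)$ holds for every $(\eta,\eta')$ in a neighborhood of $(\omega,\omega')$ if and only if $\v^n=\v$ and $P_n=(0,0)$. The $(\Leftarrow)$ direction is immediate from the displayed formula and the locally-constant property: when $M_n=M'_n=0$ and $\v^n=\v$, $\Phi^n$ restricts to the identity on all of $U\times U'\times\{\v\}$. For $(\Rightarrow)$ the key point --- and the main obstacle --- is that $\sigma^{M_n}\eta=\eta$ holding for all $\eta$ in an open set forces $M_n=0$ (and likewise $M'_n=0$): an open cylinder set constrains only finitely many coordinates, so whenever $M_n\ne 0$ it contains a sequence that is not $M_n$-periodic. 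This is exactly what separates stability from ordinary periodicity of the single point $\omega$, for which $\sigma^{M_n}\omega=\omega$ merely says $\omega$ is $M_n$-periodic, and it is what ties stability to the geometric requirement that the curve return to the origin square rather than to a shifted copy of the configuration. Since $\Phi^k(\omega,\omega',\v)=(\omega,\omega',\v)$ already forces $\v^k=\v$, under which $\sum_{i=0}^{k-1}\v^i=P_k$, the stability property at level $k$ is equivalent to the conjunction $\Phi^k(\omega,\omega',\v)=(\omega,\omega',\v)$ and $\sum_{i=0}^{k-1}\v^i=(0,0)$; taking the least such $k$ proves statement (3).

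For statement (1), suppose the orbit is stably periodic of period $n$ and let $m$ be its least ordinary period. Then $m\mid n$, so $n=\ell m$, and periodicity of the trajectory gives $\v^{i+m}=\v^i$, whence $\sum_{i=0}^{n-1}\v^i=\ell\sum_{i=0}^{m-1}\v^i$. Stability at level $n$ makes the left side vanish, and since $\ell\ge 1$ and both sides are integer vectors, $\sum_{i=0}^{m-1}\v^i=(0,0)$ as well; thus $m$ already satisfies the stability condition, and minimality of $n$ forces $m=n$. For statement (2), I would identify the forward $\Phi$-orbit with the walk along the curve: one application of $\Phi$ traverses the arc in the current square and steps into the adjacent square, so the sequence of (square, inward-normal) pairs visited is exactly $(P_i,\v^i)$, beginning at $(0,\v)$. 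The curve is closed precisely when this sequence is periodic, i.e. when $(P_n,\v^n)=(0,\v)$ for some $n>0$; by the characterization above this is exactly stable periodicity, and the least such $n$ counts the squares traversed in one loop with multiplicity. The only care needed is to note that ordinary $\Phi$-periodicity is genuinely weaker --- it permits $\omega,\omega'$ to be shift-periodic without the curve closing --- which is why stability, not periodicity, is the correct notion for a closed curve.
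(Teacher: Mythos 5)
Your proposal is correct and follows essentially the same route as the paper: the same explicit formula $\Phi^n(\omega,\omega',\v)=\big(\sigma^{a_n}(\omega),\sigma^{b_n}(\omega'),\v^n\big)$ with $(a_n,b_n)=\sum_i \v^i$, the same openness argument (a cylinder set always contains shift-nonperiodic sequences, so stability forces the displacement sum to vanish), and the same cylinder-neighborhood construction for the converse. Your treatment of statements (1) and (2) is somewhat more explicit than the paper's (which handles (1) via $a_{kn}=k\,a_n$ and leaves (2) as an observation), but the underlying argument is identical.
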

The multiplicities mentioned in the proposition deal with the fact that curves may (a priori) intersect the same square twice. (In fact, no curve intersects a square twice. This follows from Lemma \ref{lem:M}, below.)

\begin{proof}
For $n \geq 1$ let 
$a_n=\sum_{k=1}^n \v^k_1$ and $b_n=\sum_{k=1}^n \v^k_2$.
By induction using equation \ref{eq:Phi2}, we observe that $\omega^n=\sigma^{a_n}(\omega)$ and $\eta^n=\sigma^{b_n}(\omega')$.
Now suppose $(\omega^k, \eta^k, \v^k)$ is period $n$. Note that $(a_n, b_n)$ equals the vector sum in statement (3).
We see $\sigma^{a_n}(\omega)=\omega$ and $\sigma^{b_n}(\omega')=\omega'$. 
Therefore if $a_n \neq 0$, we see that $\omega$ must be period-$a_n$ under the shift map. This is not an open condition, so 
  $(\omega, \omega',\v)$ cannot have a stable periodic orbit of period $n$. This is similarly true if $b \neq 0$. Extending this argument, we observe
that $a_{kn}=k a_n$ and $b_{kn}=k b_n$ for integers $k>1$. Therefore, $(\omega, \omega',\v)$ cannot have a stable periodic orbit of any period.
Conversely, if both $a_n=0$ and $b_n=0$, then we let $a_+=\max \{a_1, \ldots, a_n\}$ and $a_-=\min \{a_1, \ldots, a_n\}$. Define $b_+$ and $b_-$ similarly. 
Then consider the open sets
$$U=\set{\alpha \in \Omega_\pm}{$\alpha_k=\omega_k$ for all $k$ with $a_- \leq k \leq a_+$}, \textrm{and}$$
$$U'=\set{\beta \in \Omega_\pm}{$\beta_k=\omega'_k$ for all $k$ with $b_- \leq k \leq b_+$}.$$
Then observe that for $\alpha \in U$ and $\beta \in U'$ we have $\Phi^n(\alpha, \beta,\v)=(\alpha, \beta,\v)$.
Therefore, $(\omega, \omega',\v) \in X$ has a stable periodic orbit of period $n$.  Finally, observe that the condition that $a_n=0$ and $b_n=0$
is equivalent to the statement that the curve of the tiling $\tiling{\tau_{\omega, \omega'}}$ passing through $\v$ is closed. 
\end{proof}

\begin{remark}
Not all periodic orbits are stable. When $\omega_n=1$ and $\omega'_n=1$ for all $n \in \Z$, we have
$\Phi^2(\omega, \omega',\v)=(\omega, \omega',\v)$ for all $\v$, but $(\omega, \omega',\v)$ is not an $n$-stable 
periodic orbit for any $n$.
\end{remark}

\begin{corollary}
\name{cor:restatement}
Let $\mu$ and $\mu'$ be shift-invariant measures on $\Omega_\pm$. Let $P_n \subset X$ be 
the set of all $(\omega, \omega',\v)$ with stable periodic orbits of period $n$. Fix an edge $e$ of
the tiling of the plane by squares centered at the integer points as in the theorem of the introduction.
Then, $\mu \times \mu' \times \mu_N(P_n)$ is equal to the $\mu \times \mu'$ measure of those
$(\omega, \omega')$ so that the curve of the tiling $\tiling{\tau_{\omega, \omega'}}$ through $e$
is closed and visits $n$ squares (counting multiplicities).
\end{corollary}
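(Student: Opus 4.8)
The plan is to slice $P_n$ according to the normal coordinate, identify each slice with a ``closed curve through a prescribed edge'' event via the Stability Proposition, and then reduce everything to the single fixed edge $e$ using the two symmetries available: translation invariance coming from shift-invariance of $\mu$ and $\mu'$, and $\Phi$-invariance of $\nu=\mu\times\mu'\times\mu_N$ from Proposition \ref{prop:product measures}.

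First I would write
$\nu(P_n)=\tfrac14\sum_{\v\in N}(\mu\times\mu')(P_n^\v)$,
where $P_n^\v=\{(\omega,\omega'):(\omega,\omega',\v)\in P_n\}$, using that $\mu_N$ is uniform on the four normals. For each $\v$ let $e_\v$ be the edge of the square at the origin whose midpoint is the base of $\v$. Part (2) of Proposition \ref{prop:stable} identifies $P_n^\v$ precisely: $(\omega,\omega',\v)\in P_n$ if and only if the curve of $[\tau_{\omega,\omega'}]$ through $e_\v$ is closed and visits $n$ squares (counting multiplicities), since closedness and the number of squares visited are properties of the undirected curve and depend on $\v$ only through the selected edge $e_\v$. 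Thus $(\mu\times\mu')(P_n^\v)$ is exactly the event attached to the edge $e_\v$.

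Next I would reduce to one edge. The four edges $e_\v$ are the top, bottom, left, and right edges of the square at the origin. For edges of the same orientation this is routine: translating $[\tau_{\omega,\omega'}]$ by a lattice vector $(j,k)$ replaces it by $[\tau_{\sigma^{\pm j}\omega,\,\sigma^{\pm k}\omega'}]$ and carries curves to curves while preserving closedness and the number of squares, so shift-invariance of $\mu$ and $\mu'$ shows the event has equal measure for any two edges related by a translation; in particular the two horizontal edges each carry measure $(\mu\times\mu')(C_n)$, where $C_n$ denotes the event for the fixed edge $e$. Comparing horizontal with vertical edges is the crux, and for this I would use $\Phi$ instead of translations. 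By the formula (\ref{eq:Phi2}), $\Phi$ sends a normal $(a,b)$ to $s(b,a)$, so it interchanges the vertical normals $\{(0,\pm1)\}$ with the horizontal normals $\{(\pm1,0)\}$. Since $P_n$ is $\Phi$-invariant (again by part (2) of Proposition \ref{prop:stable}, as $\Phi$ merely advances along the same closed curve), $\Phi$ carries $P_n\cap\{\text{vertical normals}\}$ bijectively onto $P_n\cap\{\text{horizontal normals}\}$; as $\nu$ is $\Phi$-invariant, these sets have equal $\nu$-measure, giving
$(\mu\times\mu')(P_n^{(0,1)})+(\mu\times\mu')(P_n^{(0,-1)})=(\mu\times\mu')(P_n^{(1,0)})+(\mu\times\mu')(P_n^{(-1,0)})$.

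Combining the two reductions, the horizontal normals contribute $2(\mu\times\mu')(C_n)$ and the vertical normals contribute the same total, whence $\nu(P_n)=\tfrac14\big(2(\mu\times\mu')(C_n)+2(\mu\times\mu')(C_n)\big)=(\mu\times\mu')(C_n)$, as claimed. I expect the genuinely substantive step to be the horizontal--vertical comparison: unlike the same-orientation case it cannot be obtained from shift-invariance alone (the diagonal reflection that swaps the two orientations also swaps the roles of $\mu$ and $\mu'$), and it relies essentially on the $\Phi$-invariance of $\nu$ from Proposition \ref{prop:product measures} together with the alternation of normal directions under $\Phi$. Everything else is bookkeeping once Proposition \ref{prop:stable} is used to pass between periodic orbits and closed curves.
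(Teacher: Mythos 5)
Your proof is correct and takes essentially the same route as the paper, which sketches exactly this argument: the Stability Proposition, translation invariance coming from shift-invariance of $\mu$ and $\mu'$, and the remark that curves alternate between horizontal and vertical edge crossings --- the latter being precisely your observation that $\Phi$ interchanges the two orientations of normals, with $\Phi$-invariance of $\nu$ (Proposition \ref{prop:product measures}) completing the comparison. The paper omits the detailed verification; your write-up supplies those details along the same lines.
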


The proof follows from \hyperref[prop:stable]{the Stability Proposition} together with the observation that
both quantities are translation invariant. The fact that the horizontal or vertical orientation of $e$ is irrelevant follows from the fact that curves
of the tiling alternate intersecting horizontal and vertical edges. We omit a detailed proof of this corollary.

\subsection{An invariant function and drift}
\name{sect:drift}
In this section, we prove \hyperref[thm:drift]{the Drift Theorem}. Ideas for this result come from the drift vector of an interval exchange transformation. See \cite{PLV08}, for instance.

The first observation
of this section is that there is a simple $\Phi$ invariant function on
$X=\Omega \times \Omega' \times N$.

\begin{lemma}[Invariant function]
\name{lem:M}
Let $M\big(\omega, \omega', (a,b)\big)=b \omega_0+a \omega_0'$. This is a $\Phi$-invariant
function from $X$ to $\{\pm 1\}$.
\end{lemma}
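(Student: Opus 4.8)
The plan is to verify the two assertions separately: that $M$ indeed takes values in $\{\pm 1\}$, and that $M \circ \Phi = M$. The first is immediate from the structure of $N$. Every inward normal $(a,b) \in N$ has exactly one of $a, b$ equal to $\pm 1$ and the other equal to $0$, so the expression $b\omega_0 + a\omega_0'$ collapses to one of $\pm \omega_0$ or $\pm \omega_0'$; since $\omega_0, \omega_0' \in \{\pm 1\}$, this lies in $\{\pm 1\}$.

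For invariance, I would compute $M \circ \Phi$ directly from the defining formula \eqref{eq:Phi2}. Writing $s = \omega_0 \omega_0'$, we have $\Phi\big(\omega, \omega', (a,b)\big) = \big(\sigma^{sb}(\omega), \sigma^{sa}(\omega'), (sb, sa)\big)$. Using the shift convention \eqref{eq:shift}, the zeroth coordinates of the shifted sequences are $[\sigma^{sb}(\omega)]_0 = \omega_{sb}$ and $[\sigma^{sa}(\omega')]_0 = \omega'_{sa}$. Applying the definition of $M$ to the image, whose normal is $(sb, sa)$, therefore gives $sa\,\omega_{sb} + sb\,\omega'_{sa}$, and the goal is to show this equals $b\omega_0 + a\omega_0'$.

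The key simplification is once more that $(a,b) \in N$ forces one coordinate to vanish. If $b = 0$ (so $a = \pm 1$), then $sb = 0$ regardless of $s$, whence $\omega_{sb} = \omega_0$ and the second term drops out, leaving $sa\,\omega_0$; since $s\omega_0 = \omega_0\omega_0'\omega_0 = \omega_0'$ (using $\omega_0^2 = 1$), this equals $a\omega_0' = b\omega_0 + a\omega_0'$. Symmetrically, if $a = 0$ then $sa = 0$, so $\omega'_{sa} = \omega_0'$, and the surviving term $sb\,\omega_0'$ equals $b\omega_0$ by the companion identity $s\omega_0' = \omega_0$. This establishes invariance in all four cases at once.

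I expect the only real care needed is index bookkeeping: getting the shift index right (that $[\sigma^{sb}(\omega)]_0 = \omega_{sb}$ and not $\omega_{-sb}$), correctly reading off the new normal $(sb, sa)$ from \eqref{eq:Phi2}, and noticing that because a normal has exactly one nonzero entry the shift exponent attached to the \emph{surviving} term is forced to $0$. That observation is precisely what makes $M$ invariant, and beyond it the argument reduces to the two elementary identities $s\omega_0 = \omega_0'$ and $s\omega_0' = \omega_0$. There is no substantive obstacle.
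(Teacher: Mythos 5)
Your proof is correct, and it takes a genuinely different route from the paper's. The paper's argument (labeled only a sketch) is combinatorial: it partitions $X$ into the $16$ cells $G\big(s,s',(a,b)\big)$ determined by $\omega_0$, $\omega_0'$ and the normal, draws the transition graph recording which cells $\Phi$ can send into which, observes that this graph has two connected components of eight cells each, and checks that $M\equiv 1$ on one component and $M\equiv -1$ on the other. You instead verify invariance algebraically: from equation \ref{eq:Phi2} the image point is $\big(\sigma^{sb}(\omega),\sigma^{sa}(\omega'),(sb,sa)\big)$, so $M\circ\Phi = sa\,\omega_{sb}+sb\,\omega'_{sa}$, and since a normal has exactly one nonzero coordinate, the shift exponent attached to the surviving term vanishes, after which the identities $s\omega_0=\omega_0'$ and $s\omega_0'=\omega_0$ (using $\omega_0^2=(\omega_0')^2=1$) finish both cases. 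Your computation is complete and self-contained, arguably more rigorous than the paper's sketch, which displays only one of the two equivalence classes and leaves the other to the reader. What the paper's approach buys in exchange is structural information beyond the bare lemma: the transition graph exhibits exactly how $\Phi$ permutes the $16$ combinatorial cells, which visualizes the two dynamically invariant ``sectors'' of the system rather than merely certifying that $M$ is constant along orbits. As a proof of the stated lemma, however, yours is the tighter argument, and the index bookkeeping you flag (reading $[\sigma^{sb}(\omega)]_0=\omega_{sb}$ and the new normal $(sb,sa)$ off of \ref{eq:Phi2}) is handled correctly.
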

\begin{proof}[Sketch of proof]
We partition the space $\Omega_\pm \times \Omega_\pm \times N$ into $16$ subsets $G\big(s,s',(a,b)\big)$ according to choices of $s,s' \in \{\pm 1\}$ and $(a,b) \in N$. These groups are defined 
$$G\big(s,s',(a,b)\big)=\big\{\big(\omega, \omega', (a,b)\big) \in \Omega_\pm \times \Omega_\pm \times N~:~
\textrm{$\omega_0=s$ and $\omega'_0=s'$}\}.$$
Write $\sG$ for the set of these 16 subsets. Let $\sim$ be the strongest equivalence relation on $\sG$ for which 
$G_1 \sim G_2$ whenever $\Phi(G_1)$ intersects $G_2$. The equivalence classes can be computed by drawing the graph where
the nodes are elements of $\sG$ and the arrows are drawn from $G_1$ to $G_2$ whenever $\Phi(G_1)$ intersects $G_2$;
the equivalence classes are then the connected components of this graph.
One of the two maximal equivalence classes is shown below. 
\begin{center}
\begin{xy}
(10,20)*+{G\big(1,1,(1,0)\big)}="a"; 
(50,20)*+{G\big(1,-1,(0,1)\big)}="b";
(90,20)*+{G\big(-1,-1,(-1,0)\big)}="c";
(130,20)*+{G\big(-1,1,(0,1)\big)}="d";
(10,0)*+{G\big(1,1,(0,1)\big)}="ap"; 
(50,0)*+{G\big(1,-1,(-1,0)\big)}="bp";
(90,0)*+{G\big(-1,-1,(0,-1)\big)}="cp";
(130,0)*+{G\big(-1,1,(1,0)\big)}="dp";
{\ar@{->} "a";"b"};
{\ar@{->} "b";"c"};
{\ar@{->} "c";"d"};
{\ar@{<-} "ap";"bp"};
{\ar@{<-} "bp";"cp"};
{\ar@{<-} "cp";"dp"};
{\ar@{<->} "a";"ap"};
{\ar@{<->} "b";"bp"};
{\ar@{<->} "c";"cp"};
{\ar@{<->} "d";"dp"};
{\ar@{->}@/_{1.5pc}/ "d";"a"};%
{\ar@{->}@/_{1.5pc}/ "ap";"dp"};%
\end{xy}
\end{center}
Note that $M\equiv1$ on this equivalence class. The function $M$ takes the value $-1$ on the eight remaining
subsets.
\end{proof}

The following is a restatement of the Drift Theorem in the introduction. Equivalence follows from the Corollary \ref{cor:restatement}.
\begin{theorem}[Restated Drift Theorem]
Suppose $\mu$ and $\mu'$ are shift-invariant probability measure on $\Omega_\pm$ satisfying
$$p=\int_{\Omega_\pm} \omega_0~d\mu(\omega) \and q=\int_{\Omega_\pm} \omega'_0~d\mu'(\omega').$$
Then the $\mu \times \mu' \times \mu_N$ measure of the set of all $(\omega, \omega',\v)$ 
without stable periodic orbits is at least $\max \{|p|, |q|\}$. 
\end{theorem}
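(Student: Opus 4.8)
The plan is to distill from each $\Phi$-orbit a scalar displacement observable whose Birkhoff sums are precisely the quantities $a_n,b_n$ appearing in \hyperref[prop:stable]{the Stability Proposition}, and then to twist these observables by the $\Phi$-invariant function $M$ of Lemma~\ref{lem:M} so that they acquire a nonzero average proportional to $q$ and $p$. Writing $\x=(\omega,\omega',\v)$ for a point of $X$ with $\v=(a,b)$, set $H(\omega,\omega',\v)=\omega_0\omega'_0\,b$ and $V(\omega,\omega',\v)=\omega_0\omega'_0\,a$, the two coordinates of the displacement $s(b,a)$ produced by $\Phi$. By the computation in the proof of \hyperref[prop:stable]{the Stability Proposition}, $\sum_{k=0}^{n-1}H(\Phi^k\x)=a_n$ and $\sum_{k=0}^{n-1}V(\Phi^k\x)=b_n$. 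Writing $P=\bigcup_n P_n$ for the set of points with stable periodic orbits, on $P$ the sequences $a_n,b_n$ are bounded (they are periodic in $n$ and return to $0$), so the Birkhoff sums of $H$ and $V$ stay bounded there.

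The honest displacement functions see no drift: the integrals factor and $\int b\,d\mu_N=\int a\,d\mu_N=0$, so $\int H\,d\nu=\int V\,d\nu=0$ for $\nu=\mu\times\mu'\times\mu_N$. The key step is to twist by $M$. Since $\omega_0^2=(\omega'_0)^2=1$ and $ab=0$ for every $\v\in N$, one computes $MH=b^2\omega'_0$ and $MV=a^2\omega_0$, whence, using that $\nu$ is $\Phi$-invariant by Proposition~\ref{prop:product measures} and that $\int b^2\,d\mu_N=\int a^2\,d\mu_N=\tfrac12$,
\[
\int_X MH\,d\nu=\tfrac12\,q \and \int_X MV\,d\nu=\tfrac12\,p.
\]

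Now apply the Birkhoff ergodic theorem to $\nu$. Because $M$ is $\Phi$-invariant with $M^2=1$, the Birkhoff average of $MH$ equals $M\overline{H}$, where $\overline{H}=\lim_n\tfrac1n\sum_{k=0}^{n-1}H\circ\Phi^k$; it satisfies $\int_X M\overline{H}\,d\nu=\tfrac12 q$ and vanishes on $P$, so $\{M\overline{H}\neq0\}\subseteq X\setminus P$. Finally I would bound $|\overline{H}|\le\tfrac12$: along any orbit the normal alternates between horizontal and vertical (the new normal $s(b,a)$ is vertical exactly when the old one is horizontal), so at most half of the terms $H(\Phi^k\x)$ are nonzero and each lies in $\{\pm1\}$. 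Hence $\tfrac12|q|=\bigl|\int_X M\overline{H}\,d\nu\bigr|\le\tfrac12\,\nu(\{M\overline{H}\neq0\})\le\tfrac12\,\nu(X\setminus P)$, giving $\nu(X\setminus P)\ge|q|$; the identical argument with $V$ in place of $H$ gives $\nu(X\setminus P)\ge|p|$, and together $\nu(X\setminus P)\ge\max\{|p|,|q|\}$.

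The main obstacle is conceptual rather than computational: the genuine displacement observables average to zero, so the entire argument hinges on the realization that twisting by the invariant function $M$ converts the cancelling horizontal (resp. vertical) displacement into a quantity with average $\tfrac12 q$ (resp. $\tfrac12 p$), while the alternation of normals simultaneously supplies the matching a-priori bound $|\overline{H}|\le\tfrac12$ needed to turn that average into a lower bound on measure. A secondary point to verify carefully is that the twisted Birkhoff average truly vanishes on stable periodic orbits, i.e.\ that $a_n$ and $b_n$ remain \emph{bounded} along such orbits and not merely zero at the period.
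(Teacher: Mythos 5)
Your proposal is correct and follows essentially the same route as the paper: both arguments hinge on the invariant function $M$ of Lemma \ref{lem:M}, the computation that the $M$-weighted horizontal (resp.\ vertical) drift equals $\tfrac12 q$ (resp.\ $\tfrac12 p$), the vanishing of the displacement sums on stable periodic orbits, and the a priori bound $\tfrac12$ coming from the alternation of horizontal and vertical normals. The only real difference is bookkeeping: you twist the displacement observables by $M$ and invoke the pointwise Birkhoff ergodic theorem, whereas the paper integrates the vector $(a,b)$ directly over the invariant level sets $X_s=M^{-1}(\{s\})$ and needs only the invariant-set identity for consecutive normals (no ergodic theorem), the two computations being related by $\int_X MH\,d\nu=\sum_{s} s\int_{X_s} H\,d\nu$.
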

\begin{proof}
Let $X_s=M^{-1}(\{s\})$ for $s \in \{\pm 1\}$. We would like to compute the integral
$$I=\int_{X_s} (a,b)~d(\mu \times \mu' \times \mu_N),$$
with the integral taken over all $\big(\omega, \omega',(a,b)\big) \in X_s.$
Let $X_s(a,b)$ denote those $(\omega, \omega',\v) \in X_s$ with $\v=(a,b)$. Then,
$$I=\frac{1}{4} \sum_{(a,b) \in N} \int_{X_s(a,b)} (a,b)~d(\mu \times \mu'),$$
with the integral take over all pairs $(\omega,\omega')$ with $(a,b)$ fixed by the sum.
The $\frac{1}{4}$ appears because of the removal of $\mu_N$. Consider the case
$(a,b)=(1,0)$. Note that $M=\big(\omega,\omega',(1,0)\big)=\omega'_0$, so that for this term
$$\int_{X_s(1,0)} (1,0)~d(\mu \times \mu')=\int_{X_s(1,0)} (s \omega'_0,0) ~d(\mu \times \mu')=s(q,0).$$
Similarly, in the case $(a,b)=(-1,0)$, we have $M=\big(\omega,\omega',(-1,0)\big)=-\omega'_0$, so again
$$\int_{X_s(-1,0)} (-1,0)~d(\mu \times \mu')=\int_{X_s(s \omega'_0,0)} (-\omega'_0,0) ~d(\mu \times \mu')=s(q,0).$$
Similar analysis holds for the cases $(a,b)=(\pm 0,1)$ and show that the total integrals is given by 
$I=\frac{s}{2}(q,p)$. 

Let $P_s$ denote the set of all $\big(\omega, \omega',(a,b)\big) \in X_s$ which have stable periodic parameters.
This set is $\Phi$-invariant, and the proposition above guarantees that 
$$\int_{P_s} (a,b)~d(\mu \times \mu' \times \mu_N)=0.$$
Also note that for any $\big(\omega, \omega',(a,b)\big)$ that if $(a',b')$ is the $N$ component 
of $\Phi(\big(\omega, \omega',(a,b)\big)$ then $(a,b)+(a',b')$ is one of the four vectors of the form $(\pm 1, \pm 1)$.
Therefore, for any $\Phi$-invariant set $A \subset \Omega_\pm \times \Omega_\pm \times N$ with
$$\int_{A} (a,b)~d(\mu \times \mu' \times \mu_N)=(x,y)$$
we have $|x| \leq \frac{1}{2} \mu \times \mu' \times \mu_N(A)$ and $|y| \leq \frac{1}{2} \mu \times \mu' \times \mu_N(A)$. Applying this to the invariant set $X_s \smallsetminus P_s$, we see 
$$\frac{s}{2}(q,p)=I=\int_{X_s \smallsetminus P_s} (a,b)~d(\mu \times \mu' \times \mu_N)$$
so that $\mu \times \mu' \times \mu_N(X_s \smallsetminus P_s) \leq \max \{|p|, |q|\}$, as desired.
\end{proof}

We get a stronger result using the ergodic decomposition. Let us briefly recall the statement in this context. Let $\sM$ denote the collection of shift-invariant probability measures on $\Omega_\pm$, and $\sE \subset \sM$ denote those measures which are ergodic. For any shift invariant probability measure $\mu$, there is unique probability measure $\lambda$ defined on $\sM$ so that $\lambda(\sE)=1$ and for all continuous $f:\Omega_\pm \to \R$ we have
$$\int_{\Omega_{\pm}} f~d\mu=\int_\sE \int_{\Omega_\pm} f(x)~d\nu(x)~d\lambda(\nu).$$

Now let $\chi:X \to \R$ be the characteristic function on the set of all $(\omega, \omega',\v)$ without stable periodic orbits. 
Let $\mu$ be a shift-invariant probability measure as above. And $\lambda$ be the measure obtained from the ergodic decomposition. Then, applying the Drift Theorem to the ergodic measures yields
$$\begin{narrowarray}{0pt}{rcl}
\int_X \chi(x)~d (\mu \times \mu' \times \mu_N)(x) & = & 
\int_{\Omega_\pm} \int_{N} \int_{\Omega_\pm} \chi(\omega, \omega', \v)~d \mu'(\omega')~d\mu_N(\v)~d\mu(\omega) \\
& = &
\int_\sE \int_{\Omega_\pm} \int_{N} \int_{\Omega_\pm} \chi(\omega, \omega', \v)~d \mu'(\omega')~d\mu_N(\v)~d\nu(\omega)~d\lambda(\nu) \\
& \geq & \int_\sE \big|\int_{\Omega_\pm} \omega_0~d \nu(\omega)|~d\lambda(\nu).
\end{narrowarray}$$
The following is what is weaker than the above argument gives.

\begin{corollary}
Let $\lambda$ and $\lambda'$ be the measures obtained from the ergodic decomposition applied to $\mu$ and $\mu'$, respectively. If the $\mu \times \mu' \times \mu_N$ measure of the set of all $(\omega, \omega',\v)$ 
without stable periodic orbits is zero, then for $\lambda$-a.e. (and $\lambda'$-a.e) $\nu \in \sE$
we have $\int_{\Omega_\pm} \omega_0~d \nu(\omega)=0$. 
\end{corollary}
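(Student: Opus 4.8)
The plan is to read the conclusion off directly from the chain of (in)equalities displayed immediately before the corollary, using only the hypothesis together with the nonnegativity of the relevant integrand; no new mechanism is needed, since the corollary is explicitly advertised as being weaker than what that chain already establishes.

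First I would observe that $\chi$ is the indicator of the set of points of $X$ without stable periodic orbits, so that $\int_X \chi \, d(\mu \times \mu' \times \mu_N)$ is exactly the $\mu \times \mu' \times \mu_N$-measure of that set. The hypothesis says this measure is zero, whence $\int_X \chi \, d(\mu \times \mu' \times \mu_N) = 0$. Next I would invoke the displayed estimate preceding the corollary, which yields
$$0 = \int_X \chi \, d(\mu \times \mu' \times \mu_N) \ge \int_\sE \Big| \int_{\Omega_\pm} \omega_0 \, d\nu(\omega) \Big| \, d\lambda(\nu) \ge 0.$$
Hence the integral over $\sE$ vanishes. Since the integrand $\nu \mapsto \big| \int_{\Omega_\pm} \omega_0 \, d\nu \big|$ is nonnegative and $\lambda$-integrable, the standard fact that a nonnegative function with zero integral is almost everywhere zero forces $\int_{\Omega_\pm} \omega_0 \, d\nu = 0$ for $\lambda$-almost every $\nu \in \sE$, which is the first assertion.

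For the parenthetical $\lambda'$-statement I would rerun the same computation with the two shift factors interchanged: apply the ergodic decomposition to $\mu'$ rather than $\mu$, carry out the iterated integral over $X$ with the first-factor and normal-vector integrations performed first, and in the final step bound the resulting inner integral below by $|q|$ rather than $|p|$, where now $q = \int_{\Omega_\pm} \omega_0 \, d\nu$ is the mean of the $0$-th coordinate with respect to the ergodic component $\nu$ of $\mu'$. The Drift Theorem supplies exactly this bound, since its conclusion is $\max\{|p|,|q|\}$. The identical vanishing argument then gives $\int_{\Omega_\pm} \omega_0 \, d\nu = 0$ for $\lambda'$-a.e. $\nu$.

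The only step demanding any care — and it is routine rather than a genuine obstacle — is justifying the middle equality of the displayed chain, namely the legitimacy of reordering the integrals and of substituting the ergodic decomposition inside the iterated integral. Because $\chi \ge 0$ and every measure in sight is a probability measure, Tonelli's theorem applies with no integrability concerns, and the ergodic decomposition identity quoted in the excerpt (stated for continuous test functions) passes to the bounded measurable $\chi$ by a standard monotone-class or bounded-convergence approximation. Everything after that is a direct read-off from the inequality.
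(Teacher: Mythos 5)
Your proposal is correct and follows essentially the same route as the paper: the paper's ``proof'' of this corollary is precisely the displayed chain of (in)equalities preceding it (ergodic decomposition plus the Drift Theorem applied to each ergodic component), and the corollary is read off by noting that a nonnegative function with vanishing $\lambda$-integral is $\lambda$-a.e.\ zero, with the $\lambda'$ statement obtained by symmetry. Your added remarks on Tonelli and on extending the decomposition identity from continuous to bounded measurable test functions fill in details the paper leaves implicit, but do not change the argument.
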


\section{Renormalization of Truchet tilings}
\name{sect:renormalization}
In this section, we will describe the renormalization procedure for the dynamical system
$\Phi:X \to X$. Informally, this procedure can be described in terms of tilings as in the introduction. Given a tiling  $[\tau_{\omega, \omega'}]$,
we renormalize to obtain a new tiling $[\tau_{\eta, \eta'}]$ with $\eta=c(\omega)$ and $\eta'=c(\omega')$. 
The function $c$ is called a collapsing map and is defined in Subsection \ref{sect:collapsing}. 
The renormalized tiling is obtained from the original by collapsing some rows and columns of tiles to lines. This is explained in the following subsection.

\subsection{Notation for words}
A {\em word} in $\{\pm 1\}$ is an element $w$ of a set $\{\pm 1\}^{\{1,\ldots,n\}}$ for some $n=\ell(w) \geq 0$, called the {\em length} of $w$. 
We use $\sW$ to denote the collection of all words. 
We write $w=w_1 \ldots w_n$ with $w_i \in \{\pm 1\}$ to denote a word, and 
$\emptyset$ to denote the unique word of length $0$. 
To simplify notation of the elements in $\{\pm 1\}$, we use $+$ to denote $1$ and $-$ to denote $-1$. So the word 
$w$ where $w_1=1$ and $w_2=-1$ can be simply written as $w=+-$. 

Adjacency indicates the {\em concatenation} of words; if $w,w' \in \sW$ then 
$$ww'=w_1 \ldots w_{\ell(w)} w'_1 \ldots w'_{\ell(w')}.$$ 
For an integer $n \geq 0$, the expression $w^n$ indicates the n-fold concatenation $ww \ldots w$, with $w$ appearing $n$ times. 

If $a \leq b \leq c$ and $w=w_a \ldots w_c$ with $w_i \in \{\pm 1\}$, then we can consider the function
$f:\{a-b, \ldots, c-b\} \to \{\pm 1\}$ given by $f(i-b)=w_i$. By equation \ref{eq:cylinder set}, this determines a cylinder set $\sC(f)$, which we also denote by 
$\sC(w_a \ldots \widehat w_b \ldots w_c)$, with the hat indicating that $w_b$ represents the value of the zeroth entry of the words in the cylinder set.

\subsection{The collapsing map on shift spaces}
\name{sect:collapsing}
The idea of the collapsing function $c$ mentioned at the beginning of this section is to removed any substrings of the form $-+$ and then slide the remaining
entries together. For example,
$$c(\ldots \underline{-+}+--\underline{-+}\underline{-+}\widehat+-\underline{-+}++\ldots)=\ldots +--\widehat+-++\ldots,$$
where underlined entries have been removed. There are two potential reasons why $c(\omega)$ may not be well defined.
First, the zeroth entry might be removed by this process. Second, the remaining list may not be bi-infinite.

Formally, we define $S \subset \Omega_\pm$ to be the union of two cylinder sets
$S=\sC(\widehat{-}+) \cup \sC(-\widehat{+}).$
Given $\omega \in \Omega_\pm$, we set $K(\omega) \subset \Z$ to be
\begin{equation}
\name{eq:K}
K(\omega)=\{k \in \Z~:~\sigma^k(\omega) \not \in S\}.
\end{equation}
We say that $\omega \in \Omega_\pm$ is {\em unbounded-collapsible} if $K(\omega)$ is unbounded in both directions,
{\em zero-collapsible} if $0 \in K(\omega)$, and {\em collapsible} if it is both unbounded- and zero-collapsible.
We use $C$ to denote the collection of collapsible elements of $\Omega_\pm$.
If $\omega \in C$, then there is a unique order preserving indexing $\Z \to K(\omega)$ denoted $i \mapsto k_i$ so that 
$K(\omega)=\{k_i~:~i \in \Z\}$ and $k_0=0$. We use $c(\omega) \in  \Omega_\pm$ to denote the {\em collapse of $\omega$},
which we define by $c(\omega)_i=\omega_{k_i}$. So, the {\em collapsing map} is a map $c:C \to \Omega_\pm$. 

In the remainder of this subsection, we investigate properties of this map.
\begin{proposition}
The collapsing map is surjective.
\end{proposition}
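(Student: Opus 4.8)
The plan is to produce an explicit preimage for every $\eta \in \Omega_\pm$ by interleaving the entries of $\eta$ with removable $-1,+1$ blocks. Concretely, I would define $\omega \in \Omega_\pm$ by setting, for every $i \in \Z$,
$$\omega_{3i}=\eta_i, \qquad \omega_{3i+1}=-1, \qquad \omega_{3i+2}=+1,$$
so that $\omega$ reads $\ldots\, \eta_{-1}\,(-1)(+1)\,\eta_0\,(-1)(+1)\,\eta_1\,(-1)(+1)\,\ldots$, with $\omega_0=\eta_0$. The point is that consecutive entries of $\eta$ are separated by a single copy of the forbidden pattern $-1,+1$, which will collapse away, while each $\eta_i$ is shielded on both sides.

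Next I would compute $K(\omega)$ directly from equation \ref{eq:K}. Recall that $p \notin K(\omega)$ exactly when $\sigma^p(\omega)\in S$, i.e.\ when $p$ is the left entry of an adjacent $-1,+1$ pair (so $\omega_p=-1$ and $\omega_{p+1}=+1$) or the right entry of one (so $\omega_{p-1}=-1$ and $\omega_p=+1$). For the inserted symbols this is immediate: position $3i+1$ carries $-1$ and is followed by the $+1$ at $3i+2$, so it is a left entry of a pair and is deleted; symmetrically $3i+2$ is a right entry and is deleted. For the entry $\eta_i$ at position $3i$, its right neighbor is the inserted $-1$ at $3i+1$, so the pair $(\omega_{3i},\omega_{3i+1})$ is never $-1,+1$; and its left neighbor is the inserted $+1$ at $3i-1$, so $\omega_{3i-1}=+1\neq -1$, ruling out the right‑entry case. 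Hence $3i$ survives regardless of the value of $\eta_i$, and $K(\omega)=3\Z$. I would also note the one across‑block adjacency $(\omega_{3i+2},\omega_{3i+3})=(+1,\eta_{i+1})$, which is $++$ or $+-$ and therefore harmless.

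Since $3\Z$ is unbounded in both directions and contains $0$, the sequence $\omega$ is both unbounded‑collapsible and zero‑collapsible, so $\omega\in C$. The order‑preserving enumeration of $K(\omega)$ with $k_0=0$ is simply $k_i=3i$, whence $c(\omega)_i=\omega_{k_i}=\omega_{3i}=\eta_i$ for all $i$; that is, $c(\omega)=\eta$. As $\eta$ was arbitrary, $c$ is surjective.

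I do not expect a serious obstacle here; the only thing requiring care is the buffering. Padding after \emph{every} entry (rather than inserting blocks only where $\eta$ itself contains a $-1,+1$) is precisely what forces each surviving symbol to have left neighbor $+1$ and right neighbor $-1$ simultaneously, and that double condition is exactly what guarantees survival independently of the value $\eta_i$. The verification that $K(\omega)=3\Z$ is then the whole content of the argument.
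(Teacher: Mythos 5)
Your construction is correct, and it is essentially the paper's own approach: the paper proves surjectivity by building preimages through ``insertion'' of $(-+)$ blocks (Proposition \ref{prop:inverses of collapsing map}), and your $\omega$ is precisely the insertion $\sI_f(\eta)$ with $f(i)=-+$ for every $i$, verified directly via $K(\omega)=3\Z$. The only difference is one of scope, not method: the paper characterizes \emph{all} preimages (which it needs later for the cylinder-set and measure computations), whereas you exhibit a single explicit witness, which suffices for surjectivity.
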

To prove this proposition, we explicitly construct the inverse images. For this, we define a process we call insertion.
An insertion rule is determined by a function $f:\Z \to \sW$. Let $\omega \in \Omega_\pm$. 
From $f$, we determine a strictly increasing bi-infinite sequence of integers $\langle m_i \rangle_{i \in \Z}$ inductively according to the following two rules.
\begin{itemize}
\item $m_0=0$. 
\item For all $i \in \Z$, then $m_{i+1}-m_{i}=1+\ell \circ f(i)$.
\end{itemize}
The {\em insertion function determined by $f$}, $\sI_f:\Omega^\pm \to \Omega^\pm$ evaluated at $\eta$ is given by
the following rules.
\begin{itemize}
\item $\sI_f(\eta)_k=\eta_i$ if $k=m_i$ for some $i \in \Z$. 
\item $\sI_f(\eta)_k=f(i)_{k-m_i}$ if $m_{i}<k<m_{i+1}$ for some $i \in \Z$. 
\end{itemize}

\begin{proposition}
\name{prop:inverses of collapsing map}
Let $\sW_{-+}=\{(-+)^n~:~n \geq 0\}$. For all $\eta \in \Omega_\pm$, we have
$$c^{-1}(\eta)=\{\sI_f(\eta)~:~\textrm{$f(i) \in \sW_{-+}$ for all $i$ and $f(i) \neq \emptyset$ whenever $\eta_i=-1$ and $\eta_{i+1}=1$}\}.$$
\end{proposition}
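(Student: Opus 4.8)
The plan is to establish the claimed set equality by proving two inclusions, organized around a single local observation about the collapsing map. For $\omega \in \Omega_\pm$, a position $k \in \Z$ fails to lie in $K(\omega)$ (I will say it is \emph{removed}) precisely when $\sigma^k(\omega) \in S = \sC(\widehat{-}+) \cup \sC(-\widehat{+})$, that is, when either $\omega_k = -1$ and $\omega_{k+1} = 1$ (call this \emph{type L}) or $\omega_{k-1} = -1$ and $\omega_k = 1$ (\emph{type R}). These two conditions force $\omega_k = -1$ and $\omega_k = 1$ respectively, so a removed position has exactly one type; and a one-line check shows removed positions pair off, since a type-L position at $k$ forces $k+1$ to be type R, while a type-R position at $k$ forces $k-1$ to be type L. This pairing is the engine behind both inclusions.

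For the inclusion $\supseteq$, I would fix a rule $f$ with $f(i) \in \sW_{-+}$ for all $i$ and $f(i) \neq \emptyset$ whenever $\eta_i = -1$ and $\eta_{i+1} = 1$, set $\omega = \sI_f(\eta)$, and prove $K(\omega) = \{m_i : i \in \Z\}$. The positions strictly between $m_i$ and $m_{i+1}$ carry the alternating word $f(i) = (-+)^{n_i}$, and inspection shows each is type L or type R, hence removed. For the endpoints $m_i$, which carry $\eta_i$, I would verify $\sigma^{m_i}(\omega) \notin S$ by cases on $\eta_i$. If $\eta_i = 1$ the only danger is the type-R pattern $\omega_{m_i - 1} = -1$; but $\omega_{m_i - 1}$ is either the final $1$ of the block $f(i-1)$ or, when $f(i-1) = \emptyset$, equals $\eta_{i-1}$, which cannot be $-1$ by the boundary hypothesis applied at $i-1$. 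The case $\eta_i = -1$ is symmetric, using the hypothesis at $i$ to control $\omega_{m_i + 1}$. Then $m_0 = 0$ gives zero-collapsibility, $\{m_i\}$ is unbounded in both directions, and $c(\omega)_i = \omega_{m_i} = \eta_i$, so $\omega \in c^{-1}(\eta)$.

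For the inclusion $\subseteq$, given $\omega \in C$ with $c(\omega) = \eta$ and $K(\omega) = \{k_i\}$ (order preserving, $k_0 = 0$), I would define $f(i)$ to be the removed block $\omega_{k_i + 1} \ldots \omega_{k_{i+1} - 1}$. The crux is showing each block has the form $(-+)^{n_i}$: since $k_i$ survives, the first removed position $k_i + 1$ cannot be type R (that would make $k_i$ type L), so it is type L and equals $-1$; the pairing from the first paragraph then forces the block to alternate $-1, 1, -1, 1, \ldots$; and since $k_{i+1}$ survives, the last removed position $k_{i+1} - 1$ cannot be type L, so it equals $1$, forcing even length and the pattern $(-+)^{n_i} \in \sW_{-+}$. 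The boundary condition follows immediately: if $\eta_i = -1$, $\eta_{i+1} = 1$, yet $f(i) = \emptyset$, then $k_{i+1} = k_i + 1$ with $\omega_{k_i} = -1$, $\omega_{k_i + 1} = 1$ would make $k_i$ type L, contradicting survival of $k_i$. Finally the recursion $m_{i+1} - m_i = 1 + \ell(f(i)) = k_{i+1} - k_i$ with $m_0 = 0 = k_0$ gives $m_i = k_i$, whence $\sI_f(\eta) = \omega$.

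I expect the main obstacle to be the bookkeeping at the junctions between an inserted block and a surviving entry. The boundary requirement that $f(i) \neq \emptyset$ whenever $\eta_i = -1$ and $\eta_{i+1} = 1$ is precisely what prevents the collapse from re-removing a position it ought to preserve, and conversely what rules out the degenerate empty block in the reverse direction; getting this local case analysis right—tracking whether each neighbor $\omega_{m_i \pm 1}$ comes from an inserted word or from a survivor—is where all the content lies, while the remaining checks (unboundedness, $m_i = k_i$, and $c(\omega)_i = \eta_i$) are routine. Surjectivity of $c$ then follows as a corollary, since taking $f(i) = -+$ when $\eta_i = -1$, $\eta_{i+1} = 1$ and $f(i) = \emptyset$ otherwise produces an explicit preimage of any $\eta \in \Omega_\pm$.
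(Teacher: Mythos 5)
Your proof is correct and takes essentially the same route as the paper's: one inclusion by showing $K(\sI_f(\eta))=\{m_i\}$ (so that collapsing recovers $\eta$), the other by decomposing the removed positions of $\omega$ into alternating blocks that become the insertion rule $f$. Your explicit type-L/type-R pairing analysis fills in precisely the local case checks that the paper compresses into ``by definition of $c$'' and ``it is clearly true'' --- in particular the verification that the kept positions $m_i$ really avoid $S$, which is where the boundary hypothesis $f(i)\neq\emptyset$ is genuinely used --- so your write-up is, if anything, more complete than the paper's at those junction points.
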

\begin{proof}
Suppose $c(\omega)=\eta$. We will show that $\omega$ is an element of the set on the right hand side of the equation. 
Consider the set $K(\omega)$ indexed by $i \mapsto k_i$ as in the definition of the collapsing map. By definition of $c$, 
for all $i$ we have $\eta_{k_i}=\omega_i$, and $\eta_{k_i+1} \ldots \eta_{k_{i+1}-1} \in \sW_{-+}$. This proves that $\eta=\sI_f(\omega)$ with $f(i) \in \sW_{-+}$ for all $i$.  
Finally, by definition of $K(\eta)$, when $k_{i+1}=k_i+1$ we must have $k_{i} \neq -1$ or $k_{i+1} \neq 1$. This is equivalent to the statement
that $f(i) \neq \emptyset$ whenever $\omega_i=-1$ and $\omega_{i+1}=1$. 

Conversely, suppose $\omega=\sI_f(\eta)$ with $f$ as above. Define $K(\omega)$. 
Also define $m_i$ as in the definition of insertion function applied to $\eta$. 
Let $M=\{m_i~:~i \in \Z\}$. We will show that $M=K(\omega)$. Then it follows from the definitions of $c$ and $\sI_f$ that $c(\omega)=\eta$ as desired.
It is clearly true that 
$M \subset K(\eta)$, because every inserted word is of the form $(-+)^n$ for some $n \geq 0$. 
Now we show that $K(\omega) \subset M$. Suppose $k \in K(\omega) \smallsetminus M$.
Since $k \not \in M$, we note that $m_i<k<m_{i+1}$ for some $i \in \Z$. 
Therefore by the definition of insertion function and the fact we only insert words in $\sW_{-+}$, we have that the word
$$\omega_{m_i+1} \omega_{m_i+2} \ldots \omega_{m_{i+1}-1} =(-+)^n.$$
for $n=\frac{m_{i+1}-m_{i}-1}{2} >0$. Observe by definition of $K(\omega)$ that
$\{m_i+1, m_i+2, \ldots, m_{i+1}-1\} \not \subset K(\omega)$ and therefore $k \not \in K(\omega)$, which is a contradiction.
\end{proof}

Suppose $a,b \in \Z$ with $a \leq 0 \leq b$ and $w_{a}, \ldots, w_{b} \in \{\pm 1\}$. Recall, we use $\sC(w_a \ldots \widehat{w_0} \ldots w_b)$
to denote the cylinder set consisting of those $\omega$ for which $\omega_k=w_k$ for all $k$ satisfying $a\leq k\leq b$. 
\begin{corollary}[Preimages of cylinder sets]
\name{cor:preimages of cylinders}
Let $C_u$ denote the set of unbounded collapsible elements of $\Omega_\pm$.
Suppose $a \leq 0 \leq b$ and $w_{a}, \ldots, w_{b} \in \{\pm 1\}$. Then $c^{-1}\big(\sC(w_a \ldots \widehat{w_0} \ldots w_b)\big)$
is given by
$$C_{u} \cap \bigcup_{n_a, \ldots, n_{b-1}} \sC(s_- w_a (-+)^{n_a} w_{a+1} (-+)^{n_{a+1}} \ldots \widehat{w_0} \ldots (-+)^{n_{b-2}} w_{b-1} (-+)^{n_{b-1}} w_b s_+),$$
where the union is taken over all choices of integers $n_k$ such that $n_k \geq 1$ if $w_k=-1$ and $w_k{+1}=1$, and $n_k \geq 0$ otherwise. The word $s_-=+$ if $w_a=1$ and $s_-=\emptyset$ otherwise. The word $s_+=-$ if $w_b=-1$ and $s_+=\emptyset$ otherwise.
\end{corollary}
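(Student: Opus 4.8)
The plan is to deduce this from Proposition \ref{prop:inverses of collapsing map} by taking the union of the fiber descriptions $c^{-1}(\eta)$ over all $\eta$ in the cylinder $\sC(w_a \ldots \widehat{w_0} \ldots w_b)$ and rewriting the result as a union of cylinder sets. Writing each insertion word as $f(i) = (-+)^{n_i}$, an element $\omega = \sI_f(\eta)$ reads, around position $0$, as the bi-infinite word $\ldots \eta_{a-1} (-+)^{n_{a-1}} w_a (-+)^{n_a} w_{a+1} \cdots \widehat{w_0} \cdots w_{b-1} (-+)^{n_{b-1}} w_b (-+)^{n_b} \eta_{b+1} \ldots$, in which only the coordinates $w_a, \ldots, w_b$ and the interior blocks $(-+)^{n_a}, \ldots, (-+)^{n_{b-1}}$ are pinned down by the choice of $\eta$, while $\eta_{a-1}, \eta_{b+1}$ and the outer counts $n_{a-1}, n_b$ remain free. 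The interior half of the constraint ``$f(i) \neq \emptyset$ whenever $\eta_i = -1$ and $\eta_{i+1} = 1$'' becomes exactly ``$n_k \geq 1$ if $w_k = -1$ and $w_{k+1} = 1$'' for $a \le k \le b-1$, matching the constraints in the statement.

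The core of the argument is to show that the free outer data collapses into the two boundary symbols $s_-$ and $s_+$. The key point is that a coordinate of $\omega$ carrying the value $w_j$ lies in $K(\omega)$ precisely when it is not swallowed by an adjacent $-+$ block, and at the two endpoints this imposes a single-symbol condition on the neighbor lying outside the window. Concretely, when $w_a = +1$ the definition of $S$ forces the symbol preceding $w_a$ to equal $+$: if $\eta_{a-1} = +1$ it already equals $+$, and if $\eta_{a-1} = -1$ the constraint makes $n_{a-1} \geq 1$ so the block $(-+)^{n_{a-1}}$ ends in $+$; hence that symbol is determined, giving $s_- = +$. When $w_a = -1$ the left neighbor is unconstrained, so $s_- = \emptyset$. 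The symmetric analysis at the right endpoint yields $s_+ = -$ when $w_b = -1$ and $s_+ = \emptyset$ otherwise. Everything strictly outside the displayed window is left free.

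Having pinned down the window, I would verify the two inclusions. Every $\omega$ in a preimage fiber is collapsible, hence lies in $C_u$, and by the previous paragraph lies in one of the listed cylinders. For the converse I must check that each $\omega \in C_u$ lying in one of these cylinders belongs to $C$ and satisfies $c(\omega)_j = w_j$ for $a \le j \le b$. Unbounded-collapsibility is supplied by the hypothesis $\omega \in C_u$, while zero-collapsibility ($0 \in K(\omega)$) is automatic: since $a \le 0 \le b$, the coordinate $w_0$ sits inside the window, and the relevant $n_k$ constraint (or the boundary symbol when $a = 0$ or $b = 0$) forces the appropriate neighbor of position $0$ so that $\sigma^0(\omega) \notin S$. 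One then checks that each $w_j$-coordinate is kept and that the intervening $(-+)^{n_k}$ blocks are deleted in their entirety, so that scanning $K(\omega)$ outward from $k_0 = 0$ recovers the values $w_a, \ldots, w_b$ at indices $a, \ldots, b$. Since zero-collapsibility is automatic, $C \cap \text{(cylinder)} = C_u \cap \text{(cylinder)}$, which is why only $C_u$ appears in the formula.

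The hard part will be the boundary bookkeeping: correctly disentangling which mechanism---the constraints ``$n_k \ge 1$'', the boundary symbols $s_\pm$, or membership in $C_u$---is responsible for each of the three requirements (kept endpoints, zero-collapsibility, unbounded-collapsibility), and checking this uniformly across the degenerate cases $a = 0$, $b = 0$, and $a = b = 0$ where the window abuts position $0$. The interior combinatorics, by contrast, are a routine transcription of Proposition \ref{prop:inverses of collapsing map}.
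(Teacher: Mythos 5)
Your proposal is correct and follows the route the paper intends: the corollary is stated as an immediate consequence of Proposition \ref{prop:inverses of collapsing map} (the paper supplies no separate proof), and your derivation---translating the insertion description $\omega=\sI_f(\eta)$ into cylinder data, observing that the endpoint constraints force the boundary symbols $s_\pm$, and noting that zero-collapsibility is automatic so that only $C_u$ need appear---is exactly that deduction. The boundary and degenerate cases ($a=0$, $b=0$, $a=b=0$) you flag do work out by the same single-symbol mechanism you describe, so there is no gap.
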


\begin{corollary}
The collapsing map is continuous.
\end{corollary}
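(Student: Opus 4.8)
The plan is to verify continuity directly from the topology on $\Omega_\pm$, using the explicit computation of preimages already recorded in Corollary \ref{cor:preimages of cylinders}. Since the cylinder sets form a basis for the topology on $\Omega_\pm$, a map into $\Omega_\pm$ is continuous precisely when the preimage of every cylinder set is open. I would first reduce to cylinder sets of the special shape appearing in that corollary: given any cylinder set $\sC(f)$ with $f$ defined on a finite interval $\{m,\ldots,n\} \subset \Z$, I can enlarge the interval to one of the form $\{a,\ldots,b\}$ with $a \leq 0 \leq b$ by writing $\sC(f)$ as a finite union of cylinder sets $\sC(w_a \ldots \widehat{w_0} \ldots w_b)$ obtained by filling in the finitely many unspecified coordinates in all possible ways. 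Because preimages commute with unions, it suffices to show that $c^{-1}\big(\sC(w_a \ldots \widehat{w_0} \ldots w_b)\big)$ is relatively open in the domain $C$.

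Next I would simply quote Corollary \ref{cor:preimages of cylinders}, which expresses this preimage as $C_u \cap U$, where $C_u$ is the set of unbounded-collapsible elements and $U = \bigcup_{n_a,\ldots,n_{b-1}} \sC(\ldots)$ is a union of cylinder sets. A union of cylinder sets is open in $\Omega_\pm$, so $U$ is open. The one bookkeeping point to check is the relationship between $C$ and $C_u$: by definition every collapsible element is in particular unbounded-collapsible, so $C \subseteq C_u$. Since the image of $c^{-1}$ lies in the domain $C$, we have $c^{-1}\big(\sC(\ldots)\big) \subseteq C$, and therefore
$$c^{-1}\big(\sC(w_a \ldots \widehat{w_0} \ldots w_b)\big) = (C_u \cap U) \cap C = U \cap C,$$
using $C \subseteq C_u$ in the last equality. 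This exhibits the preimage as the intersection of the open set $U$ with $C$, hence as a relatively open subset of $C$, which is exactly what is needed.

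I do not expect any genuine obstacle here, since the substantive work — identifying which bi-infinite words collapse into a prescribed finite pattern around the origin — has already been carried out in Proposition \ref{prop:inverses of collapsing map} and Corollary \ref{cor:preimages of cylinders}. The only care required is the reduction to cylinder sets centered at the origin and the observation that $C \subseteq C_u$, so that intersecting the open set $U$ with the domain $C$ yields precisely the preimage. In short, continuity is a formal consequence of the preimage formula: preimages of basic open sets are traces on $C$ of open subsets of $\Omega_\pm$.
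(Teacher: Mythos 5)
Your proof is correct and follows essentially the same route as the paper: both deduce continuity from Corollary \ref{cor:preimages of cylinders}, using that cylinder sets form a basis and that the preimage of each basic cylinder set is the trace on $C$ of a union of cylinder sets, hence relatively open. Your extra bookkeeping (reducing to cylinders centered at the origin and checking $C \subseteq C_u$) merely makes explicit details the paper's two-line proof leaves implicit.
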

\begin{proof}
By the above corollary, the inverse image of a cylinder set is a union of cylinder sets intersected with $C$ and therefore open in $C$. Since the cylinder sets form a basis for the topology, the inverse image of any open set is open in $C$. Therefore, $c$ is continuous.
\end{proof}

\begin{proposition}
\name{prop:omegaalt}
If $\Omega \subset \Omega_\pm$ is a shift space
and the alternating element $\omegaalt$ defined by 
$\omegaalt_n=(-1)^n$ is not an element of $\Omega$,
then every element of $\Omega$ is unbounded-collapsible.
\end{proposition}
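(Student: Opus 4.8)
The plan is to prove the contrapositive: I will show that if some $\omega \in \Omega$ fails to be unbounded-collapsible, then $\omegaalt \in \Omega$. Since the two directions of unboundedness are mirror images of each other, I would assume without loss of generality that $K(\omega)$ is bounded above, so that there is an integer $N$ with $k \notin K(\omega)$, equivalently $\sigma^k(\omega) \in S$, for every $k > N$. The goal is then to extract the alternating sequence from the tail of $\omega$ and use the topology of $\Omega$.

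First I would unpack membership in $S = \sC(\widehat{-}+) \cup \sC(-\widehat{+})$ as a pair of local conditions on the entries of $\omega$: by the definitions of the two cylinder sets, $\sigma^k(\omega) \in S$ holds exactly when either $\omega_k = -1$ and $\omega_{k+1} = +1$, or $\omega_{k-1} = -1$ and $\omega_k = +1$. Applying this for all $k > N$ yields two implications valid for large indices, namely that $\omega_k = -1$ forces $\omega_{k+1} = +1$, and $\omega_k = +1$ forces $\omega_{k-1} = -1$.

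Next I would combine these implications to conclude that $\omega_{k+1} = -\omega_k$ for all $k > N$, i.e. that the tail of $\omega$ is strictly alternating. Concretely, if $\omega_k = -1$ the first implication gives $\omega_{k+1} = +1$; and if $\omega_k = +1$, then $\omega_{k+1}$ cannot also equal $+1$, since the second implication applied at $k+1$ would force $\omega_k = -1$, a contradiction, so $\omega_{k+1} = -1$. Hence there is a fixed sign $\epsilon \in \{\pm 1\}$ with $\omega_k = \epsilon(-1)^k$ for all $k > N$.

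Finally I would run a limiting argument exploiting that $\Omega$ is closed and shift-invariant. Taking $m \to \infty$ through even integers, the shifts $\sigma^m(\omega) \in \Omega$ agree with $\epsilon\,\omegaalt$ on the window $\{j : j > N - m\}$, which exhausts $\Z$, so $\sigma^m(\omega) \to \epsilon\,\omegaalt$ in the product topology and therefore $\epsilon\,\omegaalt \in \Omega$. If $\epsilon = +1$ this is the claim. If $\epsilon = -1$, I would note $\sigma(\omegaalt) = -\omegaalt$ and use shift-invariance to obtain $\omegaalt = \sigma^{-1}(-\omegaalt) \in \Omega$; the case where $K(\omega)$ is bounded below is handled by the symmetric argument. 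The main obstacle I anticipate is purely bookkeeping: converting the two disjunctive $S$-membership conditions into strict alternation without case-gaps, and carefully tracking the parity and the sign $\epsilon$ so that the limit lands on $\omegaalt$ itself rather than on its shift $-\omegaalt$.
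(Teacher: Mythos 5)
Your proof is correct and follows essentially the same route as the paper's: argue the contrapositive, show that failure of unbounded-collapsibility forces a strictly alternating tail, and then obtain $\omegaalt$ as a limit of shifts using that $\Omega$ is closed and shift-invariant. The paper's own proof is a two-line sketch of exactly this; you have merely filled in the details it leaves implicit (unpacking membership in $S$, the alternation argument, and the sign/parity bookkeeping via $\sigma(\omegaalt)=-\omegaalt$).
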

\begin{proof}
Suppose $\omega \in \Omega$ is not unbounded-collapsible. Then, $\omega_{n+1}=-\omega_n$ for all $n>N$ or all $n<N$ for some $N \in \Z$.
Therefore, $\omegaalt$ can be obtained as a limit of shifts of $\omega$.
\end{proof}

We have the following analog for measures.
\begin{proposition}
\name{prop:omegaalt2}
Suppose $\mu$ is a shift-invariant probability measure on $\Omega_\pm$, and that
$$\mu(\set{\omega \in \Omega_\pm}{$\omega$ is not unbounded-collapsible})>0.$$
Then $\mu(\{\omegaalt\})>0$. 
\end{proposition}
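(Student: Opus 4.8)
The plan is to reduce the statement to a concentration phenomenon: shift-invariance will force the set of \emph{eventually} alternating sequences to carry exactly the same mass as the set of \emph{everywhere} alternating sequences, which is precisely $\{\omegaalt, -\omegaalt\}$.

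First I would import the structural description of the non-unbounded-collapsible sequences from the proof of Proposition \ref{prop:omegaalt}, which already shows that if $\omega$ fails to be unbounded-collapsible then $\omega_{n+1}=-\omega_n$ for all $n>N$, or $\omega_{n+1}=-\omega_n$ for all $n<N$, for some $N\in\Z$. Writing $A_{+}$ for the set of sequences with an eventually alternating forward tail and $A_{-}$ for those with an eventually alternating backward tail, this says that the set appearing in the hypothesis is contained in $A_{+}\cup A_{-}$. Hence the assumption forces $\mu(A_{+})>0$ or $\mu(A_{-})>0$; by the reflection symmetry $n\mapsto -n$ the two cases are interchangeable, so I may assume $\mu(A_{+})>0$.

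Next I would filter $A_{+}$ according to where the alternating tail begins. For $N\in\Z$ set
$$A_{+}^{(N)}=\{\omega\in\Omega_\pm : \omega_{n+1}=-\omega_n \textrm{ for all } n\geq N\},$$
which is a closed, hence Borel, set. These sets increase with $N$ and satisfy $A_{+}=\bigcup_{N\in\Z} A_{+}^{(N)}$. The key computation is the identity $\sigma^{-1}\big(A_{+}^{(N)}\big)=A_{+}^{(N+1)}$, which follows directly from $\big(\sigma(\omega)\big)_n=\omega_{n+1}$; shift-invariance of $\mu$ then gives $\mu\big(A_{+}^{(N)}\big)=\alpha$ for a constant $\alpha$ independent of $N$. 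Continuity from below now yields $\alpha=\mu(A_{+})>0$. Passing to the intersection, the sets $A_{+}^{(N)}$ decrease as $N\to-\infty$ with $\bigcap_{N\in\Z}A_{+}^{(N)}=\{\omegaalt,-\omegaalt\}$, so continuity from above (valid since $\mu$ is a probability measure) gives $\mu(\{\omegaalt,-\omegaalt\})=\alpha>0$. Finally, since $\sigma(\omegaalt)=-\omegaalt$, shift-invariance forces $\mu(\{\omegaalt\})=\mu(\{-\omegaalt\})$, whence each equals $\alpha/2>0$, proving the claim.

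The genuinely substantive step is the middle one: recognizing that because $\mu$ is shift-invariant the mass of every translate $A_{+}^{(N)}$ is the same, so that the increasing union $A_{+}$ and the decreasing intersection $\{\omegaalt,-\omegaalt\}$ are squeezed to the identical measure $\alpha$. In effect the positive mass on eventually alternating sequences cannot be spread across the $\sigma$-orbit of tail positions and must concentrate on the two fully alternating points. The remaining ingredients — the tail characterization (already available from Proposition \ref{prop:omegaalt}) and the continuity-from-below/above bookkeeping — are routine.
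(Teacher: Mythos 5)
Your proof is correct, and there is in fact nothing in the paper to compare it against: Proposition \ref{prop:omegaalt2} is stated there without proof, so your proposal supplies the missing argument. The route you take is the natural companion to Proposition \ref{prop:omegaalt}: its tail characterization places the non-unbounded-collapsible set inside $A_+\cup A_-$, and your squeeze is a Poincar\'e-recurrence-style argument that checks out in every step --- the identity $\sigma^{-1}\big(A_+^{(N)}\big)=A_+^{(N+1)}$ shows all the sets $A_+^{(N)}=\sigma^{-N}\big(A_+^{(0)}\big)$ carry the same mass $\alpha$, they increase to $A_+$ as $N\to+\infty$ and decrease to $\{\omegaalt,-\omegaalt\}$ as $N\to-\infty$, so $\mu(A_+)=\alpha=\mu(\{\omegaalt,-\omegaalt\})$, and since $\sigma$ swaps $\omegaalt$ and $-\omegaalt$ each atom has mass $\alpha/2>0$. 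Two details are worth keeping explicit in a final write-up, and you already flag both: continuity from above requires that $\mu$ be finite (it is a probability measure), and the reduction to the $A_+$ case requires either running the mirrored argument on $A_-$ with $\sigma^{-1}$ in place of $\sigma$, or noting that the reflection $(R\omega)_n=\omega_{-n}$ carries shift-invariant measures to shift-invariant measures, fixes $\omegaalt$, and exchanges $A_-$ with $A_+$.
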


\begin{lemma}
Suppose $A \subset \Omega_{\pm}$ is shift-invariant, then so is $c(A \cap C)$. If $\Omega \subset \Omega_\pm$ is a shift space and
$\omegaalt \not \in \Omega$, then $c(\Omega \cap C)$ is a shift space.
\end{lemma}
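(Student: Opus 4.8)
The plan is to treat the two assertions separately: the shift-invariance claim will follow from a commutation relation between the collapsing map $c$ and the shift $\sigma$, and the closedness needed for the second assertion will follow from continuity of $c$ together with compactness. Shift-invariance in the second assertion is just the first assertion applied to $A=\Omega$, so the only genuinely new ingredient for the second statement is closedness.

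First I would establish the equivariance property of $c$. Fix $\omega \in C$ and index $K(\omega)=\{k_i\}_{i\in\Z}$ as in the definition, with $k_0=0$, and set $k_1=\min\{k\in K(\omega):k>0\}$. Since $\sigma^k\circ\sigma^{k_1}(\omega)=\sigma^{k+k_1}(\omega)$, one checks $K(\sigma^{k_1}(\omega))=K(\omega)-k_1$, which is again unbounded in both directions and contains $0$; hence $\sigma^{k_1}(\omega)\in C$, and its order-preserving indexing with value $0$ at index $0$ is $j\mapsto k_{j+1}-k_1$. A direct computation then gives $c\big(\sigma^{k_1}(\omega)\big)_j=\omega_{k_{j+1}}=\sigma\big(c(\omega)\big)_j$ for all $j$, so that
$$\sigma\circ c(\omega)=c\circ\sigma^{k_1}(\omega),$$
and symmetrically $\sigma^{-1}\circ c(\omega)=c\circ\sigma^{k_{-1}}(\omega)$, where $k_{-1}=\max\{k\in K(\omega):k<0\}$ and $\sigma^{k_{-1}}(\omega)\in C$ by the same reasoning.

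Given this relation, the first assertion is formal. If $A$ is shift-invariant and $\eta=c(\omega)$ with $\omega\in A\cap C$, then $\sigma(\eta)=c\big(\sigma^{k_1}(\omega)\big)$ with $\sigma^{k_1}(\omega)\in A\cap C$, so $\sigma(\eta)\in c(A\cap C)$; the same argument with $k_{-1}$ shows $\sigma^{-1}(\eta)\in c(A\cap C)$. Since $\sigma$ is a bijection, these two inclusions give $\sigma\big(c(A\cap C)\big)=c(A\cap C)$, so $c(A\cap C)$ is shift-invariant. Applying this with $A=\Omega$ settles shift-invariance of $c(\Omega\cap C)$ in the second assertion.

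For closedness I would invoke Proposition \ref{prop:omegaalt}: since $\omegaalt\notin\Omega$, every element of $\Omega$ is unbounded-collapsible, so collapsibility for a point of $\Omega$ reduces to zero-collapsibility, i.e. $\Omega\cap C=\{\omega\in\Omega:\omega\notin S\}=\Omega\setminus S$. Because $S$ is a union of cylinder sets it is clopen, so $\Omega\setminus S$ is a closed subset of the compact space $\Omega$ and hence compact; as $c$ is continuous on $C$, the image $c(\Omega\cap C)$ is compact and therefore closed in $\Omega_\pm$. Together with shift-invariance this makes $c(\Omega\cap C)$ a shift space. The main obstacle is the bookkeeping in the equivariance relation: $\sigma$ and $c$ commute only up to the variable shift $\sigma^{k_{\pm 1}}$, and one must verify both that the relevant shifted points stay in $C$ and that the reindexing of $K(\omega)$ is consistent with the normalization $k_0=0$. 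Once that relation is in hand, shift-invariance is purely formal and closedness is a routine compactness argument.
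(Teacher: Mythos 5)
Your proposal is correct and follows essentially the same route as the paper: the same equivariance relation $\sigma^{\pm 1}\circ c(\omega)=c\circ\sigma^{k_{\pm 1}}(\omega)$ drives shift-invariance, and closedness is obtained exactly as in the paper by noting that $\Omega\cap C=\Omega\smallsetminus S$ is closed (via Proposition \ref{prop:omegaalt}) and then using compactness together with continuity of $c$. The only difference is that you verify the reindexing $K(\sigma^{k_1}(\omega))=K(\omega)-k_1$ explicitly, a detail the paper leaves to the reader.
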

\begin{proof}
To prove the first statement we will show that if $\eta=c(\omega)$ with $\omega \in A$ then $\sigma^{\pm 1} (\eta) \in c(A \cap C)$. 
Consider the elements $k_{\pm 1} \in K(\omega) \subset \Z$, as defined in the definition of the collapsing map. We have
$\sigma^{\pm 1}(\eta)=c \circ \sigma^{k_{\pm 1}}(\omega)$. Since $\omegaalt \not \in \Omega$, all elements in $\Omega \smallsetminus C$
are not zero-collapsible. So, $\Omega \cap C$ is the intersection of $\Omega$ with two cylinder sets. So, $\Omega \cap C$ is closed.
Therefore $c(\Omega \cap C)$ is closed by the continuity of $c$ and compactness of $\Omega \cap C$.
\end{proof}

\begin{proposition}
\name{prop:collapsing measures}
Let $\mu$ be a shift invariant measure on a shift space $\Omega \subset \Omega_{\pm}$. Then,
$\mu \circ c^{-1}$ is a shift invariant measure on $c(\Omega)$. 
\end{proposition}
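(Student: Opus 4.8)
The plan is to realize $\mu\circ c^{-1}$ as the push-forward of $\mu$ under a map that semiconjugates the shift on $c(\Omega)$ to a \emph{first-return map} of $\sigma$ on $\Omega$, and then to invoke the classical fact that an induced (first-return) map preserves the ambient measure restricted to its return set. Supportedness is immediate: every point of the domain $\Omega\cap C$ is carried by $c$ into $c(\Omega)$, so $\mu\circ c^{-1}$ assigns zero mass to $\Omega_\pm\sm c(\Omega)$. Thus the whole content is shift-invariance, namely $\mu\big(c^{-1}(\sigma^{-1}A)\big)=\mu\big(c^{-1}(A)\big)$ for every Borel set $A$.

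First I would record how $c$ and $\sigma$ commute. Let $Z=\Omega\sm S$ be the zero-collapsible elements of $\Omega$. Since $n\in K(\omega)$ exactly when $\sigma^n(\omega)\notin S$, we have $\{n:\sigma^n(\omega)\in Z\}=K(\omega)$, so for $\omega\in C$ the first positive return time of $\omega$ to $Z$ under $\sigma$ is precisely $k_1(\omega)$, the smallest positive element of $K(\omega)$. Define $T:C\to C$ by $T(\omega)=\sigma^{k_1(\omega)}(\omega)$. Because $\omega$ is unbounded-collapsible, $K(\omega)$ is unbounded above, so $k_1(\omega)$ is finite and $T$ is everywhere defined on $C$; moreover $T(\omega)\in C$ since $K(T\omega)=K(\omega)-k_1(\omega)$ contains $0$ and is still unbounded. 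The computation $\sigma(\eta)=c\circ\sigma^{k_1}(\omega)$ for $\eta=c(\omega)$, used in the proof of the preceding lemma, is exactly the semiconjugacy $\sigma\circ c=c\circ T$ on $C$. Consequently
$c^{-1}(\sigma^{-1}A)=\{\omega\in C:\sigma(c(\omega))\in A\}=\{\omega\in C:c(T\omega)\in A\}=T^{-1}\big(c^{-1}(A)\big)$,
and the claim reduces to showing $\mu\big(T^{-1}(E)\big)=\mu(E)$ for $E=c^{-1}(A)\subseteq C$; that is, that $T$ preserves $\mu$ on $C$.

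By the previous paragraph $T$ is the first-return map of the measure-preserving invertible transformation $\sigma\colon(\Omega,\mu)\to(\Omega,\mu)$ to the set $Z$, restricted to $C\subseteq Z$. The standard induced-transformation argument (partition $Z$ by return time and telescope using invariance of $\sigma$) shows that the first-return map $R_Z$ preserves $\mu|_Z$. Since every point of $C$ returns to $Z$, we have $T=R_Z|_C$ and hence $T^{-1}(E)=C\cap R_Z^{-1}(E)$; as $\mu\big(R_Z^{-1}(E)\big)=\mu(E)$, it then suffices to know that $\mu(Z\sm C)=0$, so that the discrepancy $R_Z^{-1}(E)\cap(Z\sm C)$ is null. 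I expect controlling $Z\sm C$ to be the only real obstacle. This set consists of zero-collapsible elements that fail to be unbounded-collapsible, and I would dispatch it using Proposition \ref{prop:omegaalt2}: the non-unbounded-collapsible set carries positive measure only if $\mu$ has an atom at $\omegaalt$, yet $\omegaalt$ and $-\omegaalt=\sigma(\omegaalt)$ are not zero-collapsible and so lie outside $Z$. A short refinement settles it directly: the forward- (resp. backward-) eventually-alternating set is an increasing union of sets whose $\mu$-measures all coincide by shift-invariance, which forces its mass to concentrate on the two alternating points $\{\omegaalt,-\omegaalt\}$; intersecting with $Z$ removes these, giving $\mu(Z\sm C)=0$. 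Combining the displayed reductions yields $\mu\big(c^{-1}(\sigma^{-1}A)\big)=\mu\big(c^{-1}(A)\big)$, completing the proof.
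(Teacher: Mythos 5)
Your proposal is correct, but it reaches the conclusion through different machinery than the paper, and the comparison is instructive. The paper's proof is the induced-transformation argument carried out by hand, entirely inside $C$: it partitions $A=c^{-1}(B)$ into the sets $A_j=\{\omega\in A : k_1(\omega)=j\}$, uses the commutation $c\circ\sigma^j=\sigma\circ c$ on $A_j$ (your semiconjugacy $\sigma\circ c=c\circ T$), observes that $c^{-1}(\sigma(B))=\bigsqcup_j \sigma^j(A_j)$ --- disjointness and exhaustion here are exactly injectivity and surjectivity of your $T$, the latter supplied by unbounded-below collapsibility --- and then sums using $\mu\circ\sigma^j=\mu$. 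Because that computation never leaves $C$, no null set and no recurrence theorem ever enter. You instead invoke the classical fact that the first-return map $R_Z$ to the set $Z$ of zero-collapsible points preserves $\mu|_Z$; this buys the telescoping for free, but costs two things the paper never pays: (i) the lemma $\mu(Z\sm C)=0$, which you discharge correctly via the increasing-union argument on eventually-alternating sequences (in effect proving Proposition \ref{prop:omegaalt2}, which the paper states without proof); and (ii) finiteness of $\mu$, implicitly required both by Poincar\'e recurrence (to define $R_Z$ almost everywhere on $Z$) and by the subtraction step in (i) --- whereas the proposition as stated allows an arbitrary shift-invariant measure, and the paper's direct computation covers that case verbatim. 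A streamlined version of your route that avoids both costs: note that $T$ is already the first-return map of $\sigma$ to $C$ itself (a return of a point of $C$ to $Z$ automatically lands in $C$, since $K(\sigma^k\omega)=K(\omega)-k$), and that $T:C\to C$ is a bijection, so the induced-map argument can be run on $C$ with every point returning --- which is precisely the paper's computation.
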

\begin{proof}
The content of this proposition is that $\mu \circ c^{-1}$ is shift invariant.
Let $B \subset c(\Omega)$ be a Borel set and $A=c^{-1}(B)$. Recall the definition of $K(\omega) \subset \Z$ used in the collapsing map. Let $k_1(\omega)$ denote the smallest positive entry of $K(\omega)$. For $j\geq 1$ let 
$A_j=\{\omega \in A~:~k_1(\omega)=j\}.$ 
Then we have $\mu \circ c^{-1}(B)=\sum_{j=1}^\infty \mu(A_j)$. 
For $\omega \in A_j$ we have $c \circ \sigma^j(\omega)=\sigma \circ c(\omega)$. 
Observe that $c^{-1} \circ \sigma(B)=\bigsqcup_{j=1}^\infty \sigma^j(A_j)$. 
Therefore by shift invariance of $\mu$, 
$$\mu \circ c^{-1} \circ \sigma(B)=\sum_{j=1}^\infty \mu \circ \sigma^j(A_j)=\sum_{j=1}^\infty \mu(A_j)=\mu(B).$$
\end{proof}

\subsection{Renormalization Theorems}
\name{sect:renormalization2}
In this section we state our most general renormalization results for the map $\Phi:X \to X$ where $X=\Omega_\pm \times \Omega_\pm \times N$ as in Section \ref{sect:tilings from shift spaces}. The main results are very combinatorial, so we prove them in the next subsection. However, corollaries we state are proved to follow from the main results. 

Define $\sR_1 \subset X$ to be the set of ``once renormalizable'' elements of $X$:
\begin{equation}
\name{eq:R1}
\sR_1=\set{(\omega, \omega', \v)}{both $\omega$ and $\omega'$ are collapsible}=C \times C \times N.
\end{equation}
The renormalization mentioned is the map
\begin{equation}
\name{eq:rho}
\rho:\sR_1 \to X \quad \textrm{given by $\rho(\omega, \omega', \v)=\big(c(\omega), c(\omega'), \v\big)$}.
\end{equation}
The manner in which $\rho$ renormalizes the map $\Phi$ is described by the theorem below.

Before stating the theorem, we define some important subsets of $X$:
$$P_4=\{\textrm{$(\omega, \omega', \v)\in X$ which have a stable periodic orbit of period $4$}\}.$$
$$\NUC=\set{(\omega, \omega', \v)\in X}{either $\omega$ or $\omega'$ is not unbounded-collapsible}.$$
$$NS=\{\textrm{$(\omega, \omega', \v)\in X$ without a stable periodic orbit}\}.$$
The points in $P_4$ correspond to loops in a tiling of smallest possible size. See Proposition \ref{prop:stable}.
The points of $\NUC$ fail to be renormalizable ($\sR_1 \cap \NUC=\emptyset$) in the most dramatic way:
if $(\omega, \omega', \v) \in \NUC$, then $\big(\sigma^m(\omega), \sigma^n(\omega'),\v\big) \in \NUC$
for all $m,n \in \Z$. But, we think of $\NUC$ as a very small set. 
See Propositions \ref{prop:omegaalt} and \ref{prop:omegaalt2}.

\begin{theorem}[Renormalization]
\name{thm:renormalization}
Let $\omega, \omega' \in \Omega_\pm$ and $\v \in N$. Then the following statements hold. 
\begin{enumerate}
\item $P_4 \cap \sR_1= \emptyset$ (stable periodic orbits of period $4$ are not renormalizable.)
\item If $(\omega,\omega',\v) \in X \sm (P_4 \cup \NUC)$, then there are integers $m<0$ and $n>0$ for which $\Phi^m(\omega, \omega', \v), \Phi^n(\omega, \omega', \v) \in \sR_1$. 
In particular, the first return map $\Phi_R:\sR_1 \to \sR_1$ of $\Phi$ to $\sR_1$ is well defined and invertible.
\item If $(\omega, \omega', \v) \in \sR_1$, we have $\rho \circ \Phi_R(\omega, \omega', \v)=\Phi \circ \rho(\omega, \omega', \v).$
\item If $(\omega, \omega', \v) \in \sR_1$ has a stable periodic orbit of period larger than four, then $\rho(\omega, \omega', \v)$ has a stable periodic
orbit of strictly smaller period.
\end{enumerate}
\end{theorem}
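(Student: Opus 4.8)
The plan is to use the conjugacy from part (3), $\rho \circ \Phi_R = \Phi \circ \rho$, to transport the $\Phi_R$-periodic orbit through $p := (\omega,\omega',\v)$ to a $\Phi$-periodic orbit through $\rho(p)$, and then to compare periods. First I would check that the hypotheses place the entire $\Phi$-orbit of $p$ in the region where part (2) applies. By part (1) of Proposition~\ref{prop:stable}, $p$ has least $\Phi$-period $N$, so every point of its orbit has least $\Phi$-period $N \neq 4$; hence the orbit meets neither $P_4$ (whose points have least $\Phi$-period $4$) nor, since $\omega,\omega' \in C$ are unbounded-collapsible and unbounded-collapsibility is shift-invariant ($K(\sigma\omega)=K(\omega)-1$), the set $\NUC$. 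Thus the orbit lies in $X \sm (P_4 \cup \NUC)$ and part (2) makes $\Phi_R$ well defined along it. Let $k$ be the least period of $p$ under $\Phi_R$. Since all $\Phi_R^{\,j}(p)\in\sR_1$, iterating part (3) gives $\rho \circ \Phi_R^{\,k} = \Phi^k \circ \rho$, so $\Phi^k(\rho(p)) = \rho(\Phi_R^{\,k}(p)) = \rho(p)$; writing $q = \rho(p)$ and letting $k'$ be the least $\Phi$-period of $q$, we get $k' \mid k$, hence $k' \le k$.

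Next I would promote $q$ from merely $\Phi$-periodic to stable-periodic. By Proposition~\ref{prop:stable}(2) this is equivalent to the curve of $[\tau_{c(\omega),c(\omega')}]$ through $\v$ being closed, equivalently by part (3) to the vanishing of the total drift $\sum_{i=0}^{k'-1} \v_q^{\,i}$. This is the heart of the matter and the step I expect to be the main obstacle: by the conjugacy the displacements of the $\Phi$-orbit of $q$ are exactly the normals recorded at the return points of the $\Phi_R$-orbit of $p$, a proper subsequence of the displacements of $p$, so the vanishing of the full drift of $p$ does not formally force the vanishing of this subsum. What makes it vanish is the geometric content behind the renormalization: collapsing the rows and columns of $Y$ reassembles the arcs of the closed curve of $p$ lying outside $Y$ into a single closed curve, the net displacement of each excursion into $Y$ being annihilated by the edge identifications of statements (3)--(4) of the introduction. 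I would therefore extract from the combinatorial analysis establishing parts (1)--(3) the statement that the curve of $q$ is the collapse of the closed curve of $p$, hence itself closed; Proposition~\ref{prop:stable}(2) then gives that $q$ has a stable periodic orbit, of period equal to the number of squares its curve visits, namely $k'$.

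Finally I would establish the strict inequality $k < N$, which together with $k' \le k$ yields stable period $k' < N$ for $q$. A square $(m,n)$ lies in $Y$ exactly when its column is a collapsible column of $\omega$ or its row a collapsible row of $\omega'$, so the orbit point at a visited square lies outside $\sR_1$ precisely when that square lies in $Y$. The curve of $p$ has length $N>4$, so by statement (2) of the introduction it is not contained in $Y$ yet meets $Y$; hence at least one orbit point leaves $\sR_1$, some return time exceeds $1$, and $k < N$ (equivalently, $N=\sum_i t_i$ over the $k$ return times $t_i\ge 1$, with $t_i>1$ for some $i$). This completes the comparison. The only nonroutine ingredient is the closedness of the collapsed curve discussed above; everything else is bookkeeping with Proposition~\ref{prop:stable} and parts (1)--(3).
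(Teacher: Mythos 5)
Your proposal addresses only part (4) of the theorem, taking parts (1)--(3) as given. But the statement to be proved is all four items, and in the paper parts (1)--(3) carry essentially all of the work: the definition of the kept squares $\barK=K(\omega)\times K(\omega')$, Proposition \ref{prop:barK} identifying $\Z^2\sm\barK$ with the $1$-neighborhoods of $-+$-dividing lines, Proposition \ref{prop:loop4} (which gives item (1)), and above all the Crossing Dividing Lines Lemma (Lemma \ref{lem:crossing dividing lines}), from which items (2) and (3) are deduced. None of this appears in your argument, so as a proof of the stated theorem it is incomplete from the outset.

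More seriously, even as a conditional derivation of (4) from (1)--(3) the argument is circular at exactly the two places where it needs substance. You correctly identify that the conjugacy $\rho\circ\Phi_R=\Phi\circ\rho$ only yields $\Phi^k(q)=q$ for $q=\rho(p)$, and that stability of $q$ requires the vanishing of the drift $\sum_i \v_q^i$, which is \emph{not} formally implied (each excursion of $p$ through $Y$ contributes a correction of the form $(\pm 2k_i,0)$ or $(0,\pm 2k_i)$, and one cannot rule out by pure dynamics that $c(\omega)$ is shift-periodic, which is what nonzero drift would force via Proposition \ref{prop:stable}). But you then resolve both this point and the strict inequality $k<N$ by appealing to ``statements (2)--(4) of the introduction'' --- and those are precisely the informal, unproven claims that this theorem exists to establish; they are not available as hypotheses. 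In the paper the missing content is supplied by the tiling-level restatement of the theorem in Section \ref{sect:proofs of renormalization}: its item (3) shows that collapsing sends curves to curves \emph{respecting the cyclic or linear ordering of arcs}, which is what makes the collapse of a closed curve closed (hence $q$ stably periodic), and its item (4) shows that no closed curve consists entirely of arcs in $\barK$-squares, proven by observing that a maximal contiguous block of kept squares corresponds to words of the form $(+)^i(-)^j$ and such regions contain no loops; this is what forces some return time to exceed $1$ and gives the strict drop in period. Until you prove these two geometric facts (or equivalents), your argument assumes what it must prove; the plan is a correct skeleton for how (4) follows once the combinatorics is done, but the combinatorics is the theorem.
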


Statement (3) has the following consequence for invariant measures.
\begin{corollary}[Renormalization acts on measures]
\name{cor:renormalization acts on measures}
Let $\nu$ be a $\Phi$-invariant Borel measure on $X$. Then $\nu \circ \rho^{-1}$ is also a
$\Phi$-invariant Borel measure. 
\end{corollary}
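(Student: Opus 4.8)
The plan is to show that $\nu \circ \rho^{-1}$ is both a Borel measure and $\Phi$-invariant, with the $\Phi$-invariance being the actual content. The key tool is statement (3) of the Renormalization Theorem, which conjugates the first return map $\Phi_R$ on $\sR_1$ to $\Phi$ acting on the renormalized space via $\rho$. The natural strategy is to relate $\rho^{-1} \circ \Phi^{-1}$ to $\Phi_R^{-1} \circ \rho^{-1}$ on appropriate sets, then invoke the fact that the first return map $\Phi_R$ preserves the restriction of any $\Phi$-invariant measure to $\sR_1$ (a standard consequence of the Poincaré recurrence/Kac framework).

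First I would observe that since $\rho:\sR_1 \to X$ is defined by applying the continuous collapsing map $c$ in each of the first two coordinates (and the identity on $N$), it is Borel measurable, so $\nu \circ \rho^{-1}$ is a well-defined Borel measure on $X$. The total mass is $\nu(\sR_1) = \nu(C \times C \times N)$, which is finite when $\nu$ is. Next, and this is the heart of the argument, I would verify that $\nu$ restricted to $\sR_1$ is invariant under the first return map $\Phi_R$. By statement (2), $\Phi_R:\sR_1 \to \sR_1$ is a well-defined invertible first return map for points outside $P_4 \cup \NUC$; since first return maps of a measure-preserving invertible transformation preserve the induced measure on the return set, we get $(\nu|_{\sR_1}) \circ \Phi_R^{-1} = \nu|_{\sR_1}$ on the relevant domain. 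One must check that the exceptional sets $P_4$ and $\NUC$ do not interfere: points of $P_4$ are excluded from $\sR_1$ by statement (1), and I would argue that the measure of points whose orbit never returns to $\sR_1$ contributes nothing to the return-map dynamics.

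With $\Phi_R$-invariance of $\nu|_{\sR_1}$ in hand, I would compute for any Borel set $B \subset X$,
$$
\nu \circ \rho^{-1}\big(\Phi^{-1}(B)\big) = \nu\big(\rho^{-1}(\Phi^{-1}(B)) \cap \sR_1\big),
$$
and use statement (3), $\rho \circ \Phi_R = \Phi \circ \rho$, to rewrite $\rho^{-1} \circ \Phi^{-1} = \Phi_R^{-1} \circ \rho^{-1}$ on $\sR_1$. This turns the right-hand side into $\nu\big(\Phi_R^{-1}(\rho^{-1}(B)) \cap \sR_1\big)$, which by the $\Phi_R$-invariance just established equals $\nu\big(\rho^{-1}(B) \cap \sR_1\big) = \nu \circ \rho^{-1}(B)$, giving the desired invariance.

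The main obstacle I expect is the bookkeeping around the sets where $\Phi_R$ is only partially defined. Statement (3) is stated on all of $\sR_1$, but statement (2) guarantees the first return is genuinely well defined and invertible only after removing $P_4 \cup \NUC$; so the technical care goes into showing that the identity $\rho^{-1}\circ\Phi^{-1} = \Phi_R^{-1}\circ\rho^{-1}$ holds up to $\nu$-null discrepancies on $\sR_1$, using that $\NUC$ has the structure described (and by Propositions \ref{prop:omegaalt} and \ref{prop:omegaalt2} is controlled by atoms at $\omegaalt$, which are $\Phi$-invariant and handled separately). Making the passage from ``first return map preserves the induced measure'' fully rigorous for a general $\Phi$-invariant Borel measure — rather than just a finite one — is the part that requires the most attention, though for the finite-mass case the argument reduces to the standard Kac-style computation.
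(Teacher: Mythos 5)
Your proposal is correct and follows essentially the same route as the paper: the paper's proof likewise uses statement (3) to obtain $\nu \circ \rho^{-1} \circ \Phi^{-1}(A)=\nu\circ \Phi_R^{-1}\circ \rho^{-1}(A)$ and then invokes $\Phi_R$-invariance of $\nu$ restricted to $\sR_1=\rho^{-1}(X)$, exactly as you do. The extra care you take with measurability of $\rho$, the exceptional sets $P_4$ and $\NUC$, and the finite-versus-infinite measure issue in the return-map argument fills in details the paper leaves implicit, but it is the same proof.
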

\begin{proof}
Let $A \subset X$ be Borel. Then statement (3) implies that
$\nu \circ \rho^{-1} \circ \Phi^{-1} (A)=\nu \circ \Phi_R^{-1} \circ \rho^{-1}(A).$
Since $\nu$ is $\Phi$ invariant, it's restriction to $\sR_1=\rho^{-1}(X)$ is also $\Phi_R$ invariant. So, $\nu \circ \rho^{-1} \circ \Phi^{-1} (A)=\nu \circ \rho^{-1}(A)$ as desired.
\end{proof}

Statements (2) and (4) of the Renormalization Theorem are useful for detecting stable periodic orbits. 
We will say that $x \in X$ is {\em $n$ times renormalizable}
if $\rho^k(x)$ is well defined for all $k=1, \ldots, n$. 
We use $\sR_n \subset X$ to denote the set of $(\omega, \omega', \v)$ which are $n$ times renormalizable.
So, $\sR_n=\rho^{-n}(X)$ for $n \geq 0$. 
Similarly, we say {\em the orbit of $x$ is  $n$ times renormalizable}
if there is an $m \in \Z$ such that $\Phi^m(x)$ is $n$ times renormalizable.
We use $\sO_n$ to denote the set of all  $x \in X$ whose orbit is $n$ times renormalizable.
Note that $\sO_n$ is the smallest $\Phi$-invariant set containing $\sR_n$. 

Suppose that $x \in \sO_n \sm \sO_{n+1}$.
By (2), the only explanation is that each possible $y=\rho^n \circ \Phi^m(x)$
lies in $\NUC$ or $P_4$. 
In the latter case, we conclude by (4) that $x$ has a stable periodic orbit. We would like to make this conclusion
hold almost always, so we make the following definition.

\begin{definition}
Let $\nu$ be a Borel measure on $X$. We say $\nu$ is {\em robustly renormalizable} if
for all integers $n \geq 0$ we have $\nu \circ \rho^{-n}(\NUC)=0$.
\end{definition}

Assuming $\nu=\mu \times \mu' \times \mu_N$ as in Proposition \ref{prop:product measures}, this is equivalent to the measures of the form $\mu \circ c^{-n}$ and $\mu' \circ c^{-n}$ never having an atom at $\omegaalt$. See Proposition \ref{prop:omegaalt2}.

\begin{corollary}
\name{cor:limit}
Suppose $\nu$ is robustly renormalizable. Then,
$\nu(\NS)=\lim_{n \to \infty} \nu(\sO_n).$
\end{corollary}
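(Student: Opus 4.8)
The plan is to show that, modulo a $\nu$-null set, $\NS$ coincides with $\sO_\infty:=\bigcap_{n\geq 0}\sO_n$, and then apply continuity of measure. We work with the $\Phi$-invariant probability measure $\nu=\mu\times\mu'\times\mu_N$ of Proposition \ref{prop:product measures} (so $\nu$ is finite and preserves $\Phi$-null sets). First note that the $\sO_n$ are nested decreasingly: since $\sR_{n+1}\subseteq\sR_n$ and $\sO_n$ is the smallest $\Phi$-invariant set containing $\sR_n$, we get $\sO_{n+1}\subseteq\sO_n$. Continuity from above then gives $\lim_{n\to\infty}\nu(\sO_n)=\nu(\sO_\infty)$, so it suffices to establish the two facts $\sO_\infty\subseteq\NS$ and $\nu(\NS\sm\sO_\infty)=0$.

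For the inclusion $\sO_\infty\subseteq\NS$ no measure is needed; I argue by period reduction. Suppose $x$ has a stable periodic orbit of period $k$; by Proposition \ref{prop:stable} and the definition of $P_4$ we have $k\geq 4$. If $x\in\sO_n$ with $n\geq 1$, choose $m$ with $w=\Phi^m(x)\in\sR_n$; then $w$ has a stable periodic orbit of the same period $k$, and $w_j:=\rho^j(w)$ is defined for $0\leq j\leq n$. For each $j\leq n-1$ the point $w_j$ lies in $\sR_1$, so $w_j\notin P_4$ by statement (1), whence its period is $>4$, and statement (4) forces the period of $w_{j+1}$ to be strictly smaller. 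Thus $k=\mathrm{period}(w_0)>\mathrm{period}(w_1)>\cdots>\mathrm{period}(w_n)\geq 4$ is a strictly decreasing chain of $n+1$ integers bounded below by $4$, forcing $n\leq k-4$. Hence $x\notin\sO_n$ for $n>k-4$, so $x\notin\sO_\infty$; contrapositively $\sO_\infty\subseteq\NS$.

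For $\nu(\NS\sm\sO_\infty)=0$ I decompose $\NS\sm\sO_\infty=\bigcup_{n\geq 0}\big(\NS\cap(\sO_n\sm\sO_{n+1})\big)$ and show each piece is null. Fix $x\in\NS\cap(\sO_n\sm\sO_{n+1})$ and choose $m$ with $w=\Phi^m(x)\in\sR_n$, so $y=\rho^n(w)$ is defined. Iterating statement (3) through the $n$ levels (the class of tilings is $\rho$-invariant, so Theorem \ref{thm:renormalization} applies at each level and yields a semiconjugacy $\rho^n\circ\Phi_R^{(n)}=\Phi\circ\rho^n$ between the $n$-th return dynamics on $\sR_n$ and $\Phi$), I claim $y\in P_4\cup\NUC$: otherwise statement (2) would carry the $\Phi$-orbit of $y$ into $\sR_1$, and lifting through this semiconjugacy would place a point of the $\Phi$-orbit of $x$ in $\rho^{-n}(\sR_1)=\sR_{n+1}$, contradicting $x\notin\sO_{n+1}$. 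Since $x\in\NS$, the point $y$ cannot lie in $P_4$: a stable periodic orbit for $y$ would, pulling back one level at a time through the biconditional ``for $u\in\sR_1$, $u$ has a stable periodic orbit iff $\rho(u)$ does'' (whose forward half is statements (1) and (4), and whose backward half is the closed-curve/collapsing correspondence underlying Theorem \ref{thm:renormalization} together with Proposition \ref{prop:stable}), produce a stable periodic orbit for $w$, hence for $x$. Therefore $y\in\NUC$, i.e. $w\in\rho^{-n}(\NUC)$ and $x\in\bigcup_{m\in\Z}\Phi^{-m}\big(\rho^{-n}(\NUC)\big)$. Robust renormalizability gives $\nu\big(\rho^{-n}(\NUC)\big)=\nu\circ\rho^{-n}(\NUC)=0$, and $\Phi$-invariance of $\nu$ makes each $\Phi$-translate null; the countable union is therefore null, so the piece is null and $\nu(\NS\sm\sO_\infty)=0$.

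Combining the two facts with $\sO_\infty\subseteq\NS$ yields $\nu(\NS)=\nu(\NS\cap\sO_\infty)+\nu(\NS\sm\sO_\infty)=\nu(\sO_\infty)=\lim_{n\to\infty}\nu(\sO_n)$. I expect the main obstacle to live in the third paragraph: the delicate point is the backward half of the closed-curve biconditional used to exclude $P_4$ for points of $\NS$, since detecting that a stable periodic orbit survives un-renormalization is precisely the content buried in the combinatorial description of $\rho$; the multilevel semiconjugacy is then routine iteration of statement (3), and everything downstream reduces to the single measure-theoretic input $\nu\circ\rho^{-n}(\NUC)=0$.
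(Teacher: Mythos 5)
Your proof is correct and takes essentially the same route as the paper: both arguments establish that $\NS=\bigcap_{n}\sO_n$ modulo $\nu$-null sets --- using statement (4) for period reduction, statements (2)--(3) to force orbits that stop being renormalizable into $P_4\cup\NUC$, the closed-curve/collapsing correspondence to pull stable periodic orbits back through $\rho$, and robust renormalizability (together with $\Phi$-invariance) to discard the $\NUC$ alternative --- and then conclude by continuity of measure along the nested sequence $\sO_n$. The only difference is bookkeeping: you make the inclusion $\bigcap_n\sO_n\subseteq\NS$ exact via the quantitative bound $n\leq k-4$, and you spell out the iterated semiconjugacy and null-set manipulations that the paper's terser proof leaves implicit.
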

\begin{proof}
Since $\nu$ is robustly renormalizable, by statement (4) of the Renormalization Theorem, we know that for $\nu$-almost every
$x \in X \sm \NS$ there are $m,n \in \Z$ with $n \geq 0$ so that $\rho^n \circ \Phi^m(x) \in P_4$. 
And conversely, as stated above if $\rho^n \circ \Phi^m(x) \in P_4$ we know that $x \in X \sm NS$. 
Therefore $X \sm \NS$ is the smallest $\Phi$-invariant set containing $\bigcup_{n=0}^\infty \rho^{-n}(P_4)$.
Observe that $\rho^{-n}(P_4)=\sR_n \sm \sR_{n+1}$ up to a set of $\nu$-measure zero so that
$$X \sm \NS=\bigcup_{n=0}^\infty (\sO_n \sm \sO_{n+1}) \and \NS=\bigcap_{n=0}^\infty \sO_n.$$
This is a nested intersection, so the conclusion follows.
\end{proof}

Because of this Corollary, we wish to iteratively compute the measures of the sets  $\sO_n$. For this, we need some understanding of the return times to $\sR_n$. 
For non-negative $n \in \Z$, the {\em return time} of an element $x \in \sR_n$ to $\sR_n$ is the smallest integer $m>0$ for which 
$\Phi^m(x) \in \sR_n$. We write $m=\ret{n}{x}$. The existence of this number is provided by statement (2) of the Renormalization Theorem.
Observe that if $\nu$ is $\Phi$-invariant then we have 
$$\nu(\sO_n)=\int_{\sR_n} \ret{n}{x}~d\nu(x).$$
This demonstrates the importance of knowing the return times. 

The following lemma explains how return time is related to 
the insertion operation. Let $(\eta, \eta', \v)=\rho(\omega, \omega', \v)$ so that $\eta=c(\omega)$ and $\eta'=c(\omega')$. Recall from 
Proposition \ref{prop:inverses of collapsing map} that $\omega$ can be recovered from $\eta$ by an insertion operation.
That is, $\omega=\sI_f(\eta)$ for some $f:\Z \to \sW$ with $f(i)=(-+)^{n_i}$ for all $i$. Similarly, $\omega'=\sI_f(\eta')$ for some $f':\Z \to \sW$ with $f'(i)=(-+)^{n'_i}$ for all $i$. 

\begin{lemma}[Return times]
\name{lem:returns}
With $(\omega, \omega', \v) \in \sR_1$ as above, let $\v=(a,b)$ and let $\v'=(sb,sa)$, which is the directional
component of $\Phi(\omega, \omega', \v)$ as in equation \ref{eq:Phi2}. Assume $\omega=\sI_f \circ c(\omega)$
and $\omega'=\sI_{f'} \circ c(\omega')$ as above. The following statements give the return time of $(\omega, \omega', \v)$ to $\sR_1$.
\begin{itemize}
\item If $\v'=(1,0)$ then $\ret{1}{x}=2 \ell\big(f(0)\big)+1$. 
\item If $\v'=(-1,0)$ then $\ret{1}{x}=2 \ell\big(f(-1)\big)+1$. 
\item If $\v'=(0,1)$ then $\ret{1}{x}=2 \ell\big(f'(0)\big)+1$.
\item If $\v'=(0,-1)$ then $\ret{1}{x}=2 \ell\big(f'(-1)\big)+1$. 
\end{itemize}
\end{lemma}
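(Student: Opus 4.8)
The plan is to compute the first return time directly by tracking the orbit $\Phi^k(x)=(\sigma^{A_k}\omega,\sigma^{B_k}\omega',\v^k)$, where $A_k=\sum_{j=1}^{k}\v^j_1$ and $B_k=\sum_{j=1}^{k}\v^j_2$ are the cumulative horizontal and vertical shifts, exactly as in the proof of the Stability Proposition. Since unbounded-collapsibility is shift-invariant, $\Phi^k(x)\in\sR_1$ if and only if $\sigma^{A_k}\omega$ and $\sigma^{B_k}\omega'$ are both zero-collapsible, i.e. $A_k\in K(\omega)$ and $B_k\in K(\omega')$. Thus $\ret{1}{x}$ is the least $k>0$ with $A_k\in K(\omega)$ and $B_k\in K(\omega')$. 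I will also use the elementary fact that each application of $\Phi$ interchanges the two entries of the direction vector, so the directions $\v^1,\v^2,\v^3,\ldots$ alternate between horizontal and vertical; consequently $A_k$ changes only on steps where $\v^k$ is horizontal and $B_k$ only on steps where $\v^k$ is vertical.

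Next I would carry out the trace for the case $\v'=(1,0)$; the remaining three cases are identical after interchanging the roles of $\omega$ and $\omega'$ and reversing orientation, and the block indices then become $f(-1),f'(0),f'(-1)$ as in the statement. Writing $f(0)=(-+)^{n_0}$, the hypothesis $\omega=\sI_f\circ c(\omega)$ places this block in the entries $\omega_1\ldots\omega_{2n_0}$, with surviving entries $\omega_0$ and $\omega_{2n_0+1}$; none of the positions $1,\ldots,2n_0$ lie in $K(\omega)$, while $0,2n_0+1\in K(\omega)$ (the latter using the insertion condition of Proposition \ref{prop:inverses of collapsing map}). Starting from $\v^1=(1,0)$, so $A_1=1,\,B_1=0$, I would show by induction that the orbit crosses the block monotonically: at each horizontal step the first coordinate advances by one position, and the intervening vertical step is a single excursion that restores the transverse coordinate so that the next advance can occur. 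The crucial input is zero-collapsibility of $\omega'$: from $0\in K(\omega')$ one gets $\omega'_{-1}=\omega'_0$ when $\omega'_0=+1$ and $\omega'_1=\omega'_0$ when $\omega'_0=-1$, and these are precisely the equalities needed to guarantee that each two-step detour increments $A_k$ by $1$ rather than stalling or retreating. This also confines $B_k$ to the two values $\{0,-\omega'_0\}$ throughout, so that only these two forced entries of $\omega'$ are ever consulted and the excursion is independent of the rest of $\omega'$.

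With the monotone crossing established, the bookkeeping finishes the argument. The horizontal steps occur at $k=1,3,5,\ldots$ with $A_k=1,2,3,\ldots$, so $A_k$ runs through $1,\ldots,2n_0$ (all outside $K(\omega)$) for $0<k\le 4n_0$ before reaching $A_{4n_0+1}=2n_0+1\in K(\omega)$; hence no return can occur for $0<k<4n_0+1$, irrespective of $B_k$. By the alternation of the transverse coordinate, $B_k=0$ exactly when $A_k$ is odd, and in particular $B_{4n_0+1}=0\in K(\omega')$ at the exit. Therefore the first return occurs at $k=4n_0+1=2\ell\big(f(0)\big)+1$, as claimed, and the other cases follow by symmetry.

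I expect the main obstacle to be the induction of the second paragraph: one must verify that each two-step detour advances the first coordinate by exactly one, handling the sign cases $\omega'_0=\pm1$ and checking that the transverse coordinate never escapes the two-row band, so that only the forced values $\omega'_{-1},\omega'_0$ (or $\omega'_0,\omega'_1$) are relevant. Once confinement and monotonicity are in hand, the remaining counting and the reduction to the three symmetric cases are routine.
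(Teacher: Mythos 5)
Your route is sound and genuinely different in style from the paper's. The paper works on the tiling side: it first proves the Crossing Dividing Lines Lemma (Lemma \ref{lem:crossing dividing lines}) by a case analysis of local pictures of tiles, showing that any arc in a collapsed square extends to a four-square path crossing the $1$-neighborhood of a $-+$-dividing line with endpoints translated by $2$; the return-time statement (restated as Lemma \ref{lem:returns3}) then follows by induction on consecutive crossings of such neighborhoods. You instead work purely symbolically with equation \ref{eq:Phi2}, tracking the cumulative shifts $(A_k,B_k)$ and characterizing returns by $A_k\in K(\omega)$ and $B_k\in K(\omega')$; your deferred induction (forced equalities from zero-collapsibility of $\omega'$, confinement of $B_k$ to $\{0,-\omega'_0\}$, one advance of $A$ per pair of steps) re-derives exactly the content of the paper's Crossing Lemma. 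What the paper's formulation buys is reusability --- the same Crossing Lemma also powers statements (2)--(4) of the Renormalization Theorem and the Collapsing Dominos Lemma --- while yours buys independence from the pictures, being a self-contained computation at the level of the shift maps.

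Two cautions. First, your claim ``$B_k=0$ exactly when $A_k$ is odd'' is false as stated. Tracing the orbit with $\omega'_0=+1$ and $f(0)\neq\emptyset$ gives $(A_k,B_k)=(1,0),(1,-1),(2,-1),(2,0),(3,0)$ for $k=1,\dots,5$: at $k=2$ one has $A_2=1$ odd but $B_2=-1$, and at $k=4$ one has $A_4=2$ even but $B_4=0$. The equivalence holds only at the odd times $k$ (the horizontal-direction states), and what you actually need is just that $B$ returns to $0$ at the end of each four-step excursion, i.e.\ $B_{4j+1}=0$; this does follow from the confinement structure you describe, so the slip is reparable, but the justification of the final step should be rewritten accordingly. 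Second, the induction you flag as ``the main obstacle'' is the entire technical content of the lemma --- it is precisely what the paper isolates in Lemma \ref{lem:crossing dividing lines} and settles by checking the local configurations. You have correctly identified the mechanism and the key input (zero-collapsibility of $\omega'$ forces $\omega'_{-1}=\omega'_0$ when $\omega'_0=+1$ and $\omega'_1=\omega'_0$ when $\omega'_0=-1$, so the transverse symbol never obstructs the advance), and the sign cases do close up when written out, but until that case analysis is carried through the proof is a correct plan rather than a complete argument.
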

Observe that since each $f(i)=(-+)^{n_i}$, these word lengths are all multiples of $2$ and the return times are all equivalent to one modulo four.

We now generalize the return time definition to a linear operator on the space $\sM$ of all Borel measurable functions on $X$. Suppose $f\in \sM$. We define the {\em retraction of $f$ to $\sR_n$} to be the function 
$r_n(f):\sR_n \to \R$ given by 
$$r_n(f; x)=\sum_{i=0}^{\ret{n}{x}-1} f \circ \Phi^i(x).$$
We think of this as a generalization of the return time, since for the constant function $\bbone$ we have $\ret{n}{x}=r_n(\bbone; x)$. Observe that by $\Phi$-invariance of $\nu$ we have
\begin{equation}
\name{eq:retraction identity}
\int_{\sO_n} f~d\nu=\int_{\sR_n} r_n(f; x)~\d\nu.
\end{equation}

The theory of conditional expectations demonstrates that for any positive $\nu$-integrable $f:\sR_n \to \R$, there is a Borel measurable function $g:X \to \R$ so that 
$$\int_{\sR_n} f(y)~d\nu(y)=\int_{X} g(x)~d\nu \circ \rho^{-n}(x).$$
This function $g$ is the {\em conditional expectation} of $f(x)$ given $\rho^n(x)$, and it is uniquely defined $\nu \circ \rho^{-n}$-a.e.. See Chapter 27 of \cite{Loeve78}, for example. 

We now combine the theory of conditional expectations with our retraction operator. For $f:X \to \R$ Borel measurable and $\nu$ a $\Phi$-invariant, define 
$C(\nu, n)(f):X \to \R$ to be the {\em conditional expectation of the retraction of $f$ to $\sR_n$}. That is, $C(\nu, n)(f)$ is the $\nu \circ \rho^{-n}$-a.e. unique Borel measurable function such that for all Borel $B \subset X$ we have
\begin{equation}
\name{eq:cond exp integral}
\int_{\rho^{-n}(B)} r_n(f;x)~d\nu(x)=\int_{B} C(\nu, n)(f)(x)~d\nu \circ \rho^{-n}(x).
\end{equation}
In particular, taking $B=X$ we see $C(\nu, n):L^1(\nu) \to L^1(\nu \circ \rho^{-n})$.
Observe that $C(\nu,0)$ may be interpreted is the identity operator $\nu$-a.e., and $C(\nu,n)$ satisfies the following cocycle equation
for integers $m,n \geq 0$.
\begin{equation}
\name{eq:cocycle composition}
C(\nu,m+n)(f)=C(\nu \circ \rho^{-n},m) \circ C(\nu,n)(f) \qquad \textrm{$\nu \circ \rho^{-m-n}$-a.e.}
\end{equation}
In particular, we may write 
\begin{equation}
\name{eq:expand cocycle}
C(\nu,n)=C(\nu \circ \rho^{n-1}, 1) \circ C(\nu \circ \rho^{n-2}, 1) \circ \cdots \circ C(\nu, 1).
\end{equation}
So to evaluate the cocycle is is sufficient understand the operators $C(\nu \circ \rho^{n}, 1)$.

We summarize our approach by restating Corollary \ref{cor:limit} with this language, which is our approach to proving 
results such as Theorem \ref{thm:random tilings}. (This will be equally important for similar results involving polygon exchange maps in \cite{H12}.) Observe that by Equations \ref{eq:retraction identity} and \ref{eq:cond exp integral}, for any Borel measurable $f$ we have 
\begin{equation}
\name{eq:main approach}
\int_{\sO_n} f~d\nu=\int_{\sR_n} r_n(f,x)~d\nu(x)=\int_X C(\nu,n)(f)(x)~d \nu \circ \rho^{-n}(x).
\end{equation}
Specializing to the constant function, we obtain the following.

\begin{corollary}[Main Approach]
\name{cor:approach}
Suppose $\nu$ is robustly renormalizable. Let ${\bbone}(x)=1$ for all $x \in X$. 
Then for all $n$,
$$\nu(\sO_n)=\int_X C(\nu,n)(\bbone)(x)~d \nu \circ \rho^{-n}(x).$$
and $\nu(\NS)=\lim_{n \to \infty} \nu(\sO_n).$
\end{corollary}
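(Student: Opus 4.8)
The plan is to prove the Main Approach corollary by combining the integral identity already assembled in the text with the limit statement from Corollary~\ref{cor:limit}. The corollary has two assertions: first, the formula $\nu(\sO_n)=\int_X C(\nu,n)(\bbone)(x)~d\nu\circ\rho^{-n}(x)$ for each fixed $n$, and second, the limit identity $\nu(\NS)=\lim_{n\to\infty}\nu(\sO_n)$. These are essentially of different character: the first is a specialization of an already-derived equation, while the second is a restatement of a result proven earlier under the robust renormalizability hypothesis.

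For the first assertion, I would simply specialize Equation~\ref{eq:main approach} to the constant function $\bbone$. Taking $f=\bbone$ there, the left-hand side $\int_{\sO_n}\bbone~d\nu$ is exactly $\nu(\sO_n)$, and the rightmost term reads $\int_X C(\nu,n)(\bbone)(x)~d\nu\circ\rho^{-n}(x)$, which is precisely the claimed formula. The middle equality in that chain, $\int_{\sR_n}r_n(\bbone,x)~d\nu(x)$, is the one relating $\nu(\sO_n)$ to the integral of the return time $r_n(\bbone;x)=\ret{n}{x}$, which is valid by $\Phi$-invariance of $\nu$ (Equation~\ref{eq:retraction identity}) and the conditional-expectation identity (Equation~\ref{eq:cond exp integral}) that defines the cocycle $C(\nu,n)$. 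I should note here that applying these identities requires $r_n(\bbone;\cdot)$ to be $\nu$-integrable on $\sR_n$; equivalently, $\nu(\sO_n)<\infty$, which holds because $\nu$ is a probability measure and $\sO_n\subset X$. The return times $\ret{n}{x}$ are finite for $\nu$-a.e.\ point by statement~(2) of the Renormalization Theorem, since robust renormalizability forces $\nu\circ\rho^{-n}(\NUC)=0$, so the return map $\Phi_R$ is defined almost everywhere on $\sR_n$.

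For the second assertion, the limit identity $\nu(\NS)=\lim_{n\to\infty}\nu(\sO_n)$ is exactly the conclusion of Corollary~\ref{cor:limit}, which holds under the standing hypothesis that $\nu$ is robustly renormalizable. So I would invoke that corollary directly. The underlying mechanism, which I need not reprove, is that $\NS=\bigcap_{n=0}^\infty\sO_n$ is a nested (decreasing) intersection of the sets $\sO_n$, so continuity of the measure $\nu$ from above yields the limit. The decreasing nesting $\sO_{n+1}\subset\sO_n$ follows because being $(n+1)$-times renormalizable on some orbit implies being $n$-times renormalizable.

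The main obstacle is not any deep calculation but rather the bookkeeping around null sets and integrability: I must be careful that the cocycle $C(\nu,n)(\bbone)$ is only defined $\nu\circ\rho^{-n}$-almost everywhere, so the integral formula is an identity of integrals rather than of pointwise values, and that the exceptional set where $\ret{n}{x}$ fails to be finite is genuinely $\nu$-null. Both of these are guaranteed by the robust renormalizability hypothesis through statement~(2) of the Renormalization Theorem and Proposition~\ref{prop:omegaalt2}, so the proof reduces to citing Equation~\ref{eq:main approach} with $f=\bbone$ together with Corollary~\ref{cor:limit}, with a brief remark confirming that these hypotheses legitimize the use of the conditional-expectation machinery.
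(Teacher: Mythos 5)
Your proposal is correct and follows essentially the same route as the paper: the first identity is exactly Equation \ref{eq:main approach} specialized to $f=\bbone$ (which the paper obtains from Equations \ref{eq:retraction identity} and \ref{eq:cond exp integral}), and the limit statement is a direct citation of Corollary \ref{cor:limit}. Your added remarks on integrability of $r_n(\bbone;\cdot)$ and the almost-everywhere nature of $C(\nu,n)(\bbone)$ are sound bookkeeping consistent with the paper's use of the robust renormalizability hypothesis.
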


For general $\nu$, this limit is probably impossible to evaluate. But, for the measures associated to Theorem \ref{thm:random tilings},
the situation is very nice. In particular, we will be studying robustly renormalizable measures $\nu$ with the property that there is a finite dimensional
subspace $L \subset \sM$ such that ${\bbone} \in L$ and 
$C(\nu,n)(f) \in L$ for all $f \in L$. 
Such a reduction reduces the problem to studying a cocycle over a finite dimensional vector space.

In the cases of interest, we partition the space $X$ into six non-empty pieces $\Step_1, \Step_2, \ldots, \Step_6$ and define
the linear embedding $\epsilon:\R^6 \to \sM$ by $\epsilon(\bp)(x)=\bp_i$ if $x \in \Step_i$, and 
\begin{equation}
\name{eq:L}
L=\set{\epsilon(\bp)}{$\bp \in \R^6$}.
\end{equation}
We say $x \in \Step_i$ has {\em step class $i$}. This term is defined in the three paragraphs below.

Let $x=(\omega, \omega', \v) \in X$ be arbitrary. A {\em step} will indicate information which depends only on information at the origin, 
and the next square visited by the curve through the normal $\v$ leaving the square centered at the origin. 
Define $(\eta, \eta', \v')=\Phi(\omega, \omega',\v)$. If $\v'=(1,0)$ or $\v'=(-1,0)$, we call $x$ a {\em horizontal step},
otherwise we call $x$ a {\em vertical step}. Note that if $x$ is part of a horizontal step, then $\omega'=\eta'$, but $\eta=\sigma^{sb}(\omega)$ with $s=\omega_0 \omega_0'$ so that 
$sb \in \{\pm 1\}$. See equation \ref{eq:Phi2}. 

We divide the classes of horizontal and vertical steps into smaller classes.
We call
$x=(\omega, \omega', \v)$ a {\em $\omega_0 \omega_1$-horizontal step} if $sb=1$ and a {\em $\omega_{-1} \omega_0$-horizontal step} if $sb=-1$. 
So, we have defined the term {\em $w$-horizontal step} for each word $w$ of length $2$, i.e. $w \in \{--,-+,+-,++\}$. 
If $x$ is either a $++$-horizontal steps or a $--$-horizontal steps, then we call $x$ a {\em matching horizontal step}.
Similarly, if $x$ is a vertical step, then $\eta'=\sigma^{sa} (\omega')$.
If $sa=1$ we define $x$ to be a {\em $\omega_0' \omega_1'$-vertical step}, and if
$sa=-1$ we define $x$ to be a {\em $\omega_{-1}' \omega_0'$-vertical step}.
If $x$ is either a $++$-vertical step or a $--$-vertical step, we call $x$ a {\em matching vertical step}. 

We say $x$ has {\em step class $1$} if $x$ is a $-+$-horizontal step,
has {\em step class $2$} if $x$ is a $+-$-horizontal step,
has {\em step class $3$} if $x$ is a matching horizontal step,
has {\em step class $4$} if $x$ is a $-+$-vertical step,
has {\em step class $5$} if $x$ is a $+-$-vertical step, and
has {\em step class $6$} if $x$ is a matching vertical step. This defines the six sets $\Step_1, \ldots \Step_6 \subset X$, and
the subspace $L \subset \sM$ as in equation \ref{eq:L}.

For the following lemma define $\chi_i$ to be the characteristic function of $\Step_i$, and define $\be_i \in \R^6$ to be the 
standard basis vector with $1$ in position $i$. 

\begin{lemma}[Collapsed Steps]
\name{lem:returns2}
Suppose $x \in \sR_1$ has return time $\ret{1}{x}=4m+1$ and $\rho(x) \in \Step_j$. Then, for all $i \in \{1, \ldots, 6\}$ we have 
$r_1(\chi_i;x)=\be_j^T M \be_i$ with
$$M=\left[\begin{array}{rrrrrr}
m & m-1 & 2 & 0 & 0 & 2m \\
m & m+1 & 0 & 0 & 0 & 2m \\
m & m & 1 & 0 & 0 & 2m \\
0 & 0 & 2m & m & m-1 & 2 \\
0 & 0 & 2m & m & m+1 & 0 \\
0 & 0 & 2m & m & m & 1 \\
\end{array}\right].$$
\end{lemma}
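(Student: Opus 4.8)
The plan is to trace the return orbit of $x$ explicitly and record the step class of each of its $\ret{1}{x}=4m+1$ points. First I would reduce to the case where $\rho(x)$ is a horizontal step, i.e. $j\in\{1,2,3\}$. The diagonal reflection $\theta(\omega,\omega',(a,b))=(\omega',\omega,(b,a))$ fixes each of the two Truchet tiles, so a direct check against \eqref{eq:Phi2} gives $\theta\circ\Phi=\Phi\circ\theta$, and $\theta$ commutes with $\rho$ since $c$ is applied to both coordinates. As $\theta$ interchanges horizontal and vertical steps it carries $\Step_i$ to $\Step_{i+3}$ under the pairing $1\leftrightarrow4$, $2\leftrightarrow5$, $3\leftrightarrow6$, whence $r_1(\chi_i;x)=r_1(\chi_{\theta(i)};\theta(x))$. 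Since the displayed matrix satisfies $M_{j,i}=M_{\theta(j),\theta(i)}$ (the two diagonal $3\times3$ blocks agree and the two off-diagonal blocks agree), the rows $j\in\{4,5,6\}$ follow from the rows $j\in\{1,2,3\}$.

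So suppose $\rho(x)$ is horizontal. By Lemma \ref{lem:returns} the relevant insertion is in $\omega=\sI_f(\eta)$, with $m$ the number of $-+$ blocks inserted adjacent to $\eta_0$: to its right (positions $1,\dots,2m$) if $\rho(x)$ is a rightward step, to its left (positions $-2m,\dots,-1$) if leftward. I will track the orbit by its position $(a_k,b_k)\in\Z^2$ and direction $\v^k$ as in the proof of Proposition \ref{prop:stable}, so that the step class of $\Phi^k(x)$ is read off from $\omega_{a_k-1},\omega_{a_k},\omega_{a_k+1}$, from $\omega'_{b_k-1},\omega'_{b_k},\omega'_{b_k+1}$, and from $\v^k$ via \eqref{eq:Phi2}. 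Because consecutive steps alternate horizontal with vertical, and because the direction after one step agrees at both levels (as $\eta_0=\omega_0$ and $\eta'_0=\omega'_0$), the orbit begins with a horizontal step and the horizontal steps occur at even times $0,2,\dots,4m$ (there are $2m+1$) and the vertical steps at odd times (there are $2m$). The horizontal step at time $2k$ advances the column index from one end towards the other, crossing a pair of adjacent columns and reading the corresponding two-letter word of $\omega$.

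The crux is to confine the orbit to a band of two adjacent rows on which $\omega'$ is constant. Once the curve enters the first inserted column from the side, the tile type there is $-\omega'_0$, which sends it up when $\omega'_0=-1$ and down when $\omega'_0=+1$; the two band rows are then $\{0,1\}$ or $\{-1,0\}$ accordingly, and in each case zero-collapsibility of $\omega'$ (that $\omega'\notin S=\sC(\widehat{-}+)\cup\sC(-\widehat{+})$) forbids exactly the pattern $-+$ that would make the two relevant entries unequal, forcing them equal. This homogeneity is what turns the curve back at each boundary of the band and keeps it confined, and it makes every vertical step read a constant matching word, contributing $2m$ to $\Step_6$. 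The inserted columns carry the alternating values $-+-+\cdots-+$, so the $2m-1$ interior horizontal words are $-+$ for odd index ($m$ of them, class $1$) and $+-$ for even index ($m-1$ of them, class $2$), while the two boundary words have letters $(\eta_0,-1)$ and $(+1,\eta_1)$ in the rightward case (and the mirror letters $(+1,\eta_0)$, $(\eta_{-1},-1)$ in the leftward case). Splitting on $j$, i.e. on $(\eta_0,\eta_1)$ (resp. $(\eta_{-1},\eta_0)$): when $j=1$ both boundary words are matching, giving $(m,m-1,2,0,0,2m)$; when $j=2$ both equal $+-$, giving $(m,m+1,0,0,0,2m)$; and when $j=3$ exactly one boundary word is matching and the other is $+-$, giving $(m,m,1,0,0,2m)$ in both the $++$ and $--$ sub-cases. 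These are precisely the rows $\be_j^TM$.

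I expect the main obstacle to be the confinement claim of the third paragraph: verifying rigorously that the curve really remains in a single two-row band and turns around at each end rather than drifting vertically. The pleasant point, which I would want to make precise, is that this argument is uniform over the remaining choices — the entering direction $\v^0=(0,\pm1)$ (equivalently $s=\pm1$) and rightward versus leftward — because the band direction depends only on $\omega'_0$ and the fixed inserted $\omega$-pattern, and in every case the forbidden $-+$ supplies exactly the needed equality. Granting the confinement, the remaining enumeration of words is a finite, direction-insensitive bookkeeping, and the counts depend on the data only through $m$ and $j$, as asserted.
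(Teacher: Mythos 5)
Your proposal is correct and follows essentially the same route as the paper's own proof: there the statement is recast as the Collapsing Dominos Lemma (Lemma \ref{lem:returns4}), the two-row-band confinement is supplied by the Crossing Dividing Lines Lemma (Lemma \ref{lem:crossing dividing lines}) --- whose proof rests on exactly your observation that the absence of the pattern $-+$ in $\omega'$ at the two relevant rows (in your setting, zero-collapsibility of $\omega'$, since the return orbit starts at a kept square) forces those rows to match --- and the word count and the horizontal-to-vertical symmetry reduction are the same as yours. The one detail to add is the degenerate case $m=0$, where your interior/boundary enumeration does not literally apply: there the single step of the return orbit has the same class $j$ as $\rho(x)$, matching rows $2,3,5,6$ of $M$ with $m=0$, while $j\in\{1,4\}$ is vacuous because a $-+$ step of $\rho(x)$ forces the relevant inserted word to be nonempty, i.e. $m\geq 1$ (Proposition \ref{prop:inverses of collapsing map}) --- the paper disposes of this case separately at the start of its proof.
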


Observe that the $j$-th row of $M$ give the number of each step type which appears in the set $\{x, \Phi(x), \ldots, \Phi^{4m+1}\}$
provided $\rho(x) \in \Step_j$ and $\ret{1}{x}=4m+1$. This lemma is restated and proved as Lemma \ref{lem:returns4}.

The following Corollary is a main tool in the proofs of Theorem \ref{thm:random tilings} and for subsequent work
on polygon exchange maps \cite{H12}.

\begin{corollary}
\name{cor:matrix formula}
Suppose $\nu$ is a measure on $X$ such that for all $j \in \{1, \ldots, 6\}$ the conditional expectation of $\ret{1}{x}$ given that $\rho(x) \in \Step_j$ is a constant $4m_j+1$ at
$\nu$-a.e. point of $\rho^{-1}(\Step_j)$. 
Then $C(\nu,1) \circ \epsilon(\bp) \in L$ for all $\bp \in L$, and $\nu \circ \rho^{-1}$-a.e. we have $C(\nu,1) \circ \epsilon(\bp)=\epsilon \big( M' \bp\big)$ where
$$M'=\left[\begin{array}{rrrrrr}
m_1 & m_1-1 & 2 & 0 & 0 & 2m_1 \\
m_2 & m_2+1 & 0 & 0 & 0 & 2m_2 \\
m_3 & m_3 & 1 & 0 & 0 & 2m_3 \\
0 & 0 & 2m_4 & m_4 & m_4-1 & 2 \\
0 & 0 & 2m_5 & m_5 & m_5+1 & 0 \\
0 & 0 & 2m_6 & m_6 & m_6 & 1 \\
\end{array}\right].$$
\end{corollary}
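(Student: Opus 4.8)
The plan is to exploit the linearity of $C(\nu,1)$ together with the explicit count recorded in \hyperref[lem:returns2]{the Collapsed Steps Lemma}. Since $\epsilon(\bp)=\sum_{i=1}^6 \bp_i\,\chi_i$ and both the retraction $r_1$ and the conditional expectation defining $C(\nu,1)$ (Equation \ref{eq:cond exp integral}) are linear, it suffices to compute $C(\nu,1)(\chi_i)$ for each $i$ and to show that this function is constant on each piece $\Step_j$. Recall that, by definition, $C(\nu,1)(\chi_i)$ is the $\nu\circ\rho^{-1}$-a.e. unique function whose value at $y=\rho(x)$ is the conditional expectation of $r_1(\chi_i;x)$ given $\rho(x)=y$; I will evaluate this conditional expectation on each $\Step_j$ separately.

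First I would restrict attention to $\rho^{-1}(\Step_j)$. On this set \hyperref[lem:returns2]{the Collapsed Steps Lemma} gives $r_1(\chi_i;x)=\be_j^T M\,\be_i$, where $M$ is evaluated at the value $m=m(x)$ determined by $\ret{1}{x}=4m(x)+1$. The crucial structural observation is that every entry of $M$ is an \emph{affine} function of $m$ (its $m$-coefficient lies in $\{0,1,2\}$). Writing $\be_j^T M\,\be_i=\alpha_{ji}m+\beta_{ji}$ with constants $\alpha_{ji},\beta_{ji}$ depending only on the pair $(j,i)$, we may express
\begin{equation*}
r_1(\chi_i;x)=\frac{\alpha_{ji}}{4}\big(\ret{1}{x}-1\big)+\beta_{ji}
\qquad\textrm{for $\nu$-a.e. $x\in\rho^{-1}(\Step_j)$.}
\end{equation*}
Thus, on $\rho^{-1}(\Step_j)$, the retraction $r_1(\chi_i;\cdot)$ is a fixed affine function of the return time $\ret{1}{\cdot}$, with coefficients independent of $x$. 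Substituting $m=m_j$ recovers exactly row $j$ of $M'$, i.e. $\alpha_{ji}m_j+\beta_{ji}=\be_j^T M'\be_i$.

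Next I would take conditional expectations. Because conditional expectation is linear and the coefficients $\alpha_{ji},\beta_{ji}$ are constants, the conditional expectation of $r_1(\chi_i;\cdot)$ given $\rho$ is the same affine function of the conditional expectation of $\ret{1}{\cdot}$ given $\rho$. By hypothesis the latter equals the constant $4m_j+1$ at $\nu\circ\rho^{-1}$-a.e. point of $\Step_j$, so
\begin{equation*}
C(\nu,1)(\chi_i)(y)=\frac{\alpha_{ji}}{4}\big((4m_j+1)-1\big)+\beta_{ji}=\alpha_{ji}m_j+\beta_{ji}=\be_j^T M'\be_i
\qquad\textrm{for a.e. $y\in\Step_j$.}
\end{equation*}
In particular $C(\nu,1)(\chi_i)$ is constant on each $\Step_j$, hence lies in $L$. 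Reassembling by linearity, for $\bp\in\R^6$ and $y\in\Step_j$ I obtain $C(\nu,1)(\epsilon(\bp))(y)=\sum_i\bp_i\,\be_j^T M'\be_i=(M'\bp)_j=\epsilon(M'\bp)(y)$, which is the claimed identity and shows $C(\nu,1)(\epsilon(\bp))\in L$.

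The substantive content is entirely in \hyperref[lem:returns2]{the Collapsed Steps Lemma}; the only genuine subtlety here is interchanging the conditional expectation with the affine dependence on $m$. This is precisely where the hypothesis that the conditional expectation of $\ret{1}{x}$ is an honest \emph{constant} on each $\Step_j$ does the work, letting me pull $m_j$ through the affine expression without any finer conditioning, and the fact that each entry of $M$ is affine (not merely polynomial) in $m$ is exactly what makes the step legitimate. I expect the main care to be bookkeeping rather than analysis: confirming that the affine coefficients read off from $M$ produce precisely $M'$. As a sanity check, each row of $M'$ sums to $4m_j+1$, consistent with applying the argument to $\bbone$, for which $r_1(\bbone;x)=\ret{1}{x}$.
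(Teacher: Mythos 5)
Your proposal is correct and follows essentially the same route as the paper's proof: both reduce to the characteristic functions $\chi_i$ by linearity, invoke the \hyperref[lem:returns2]{Collapsed Steps Lemma} to write $r_1(\chi_i;\cdot)$ on $\rho^{-1}(\Step_j)$ as an affine function of $m(x)$ with constant coefficients, and then use the hypothesis that the conditional expectation of $\ret{1}{x}$ is the constant $4m_j+1$ on each piece (the paper verifies the defining integral identity \ref{eq:cond exp integral} directly and cites a.e.-uniqueness, which is the same interchange of conditional expectation with an affine map that you perform). Your observation that each row of $M'$ sums to $4m_j+1$ is a correct consistency check matching $r_1(\bbone;x)=\ret{1}{x}$.
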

\begin{proof}
Since $r_1(\chi_i;x)=\be_j \cdot M \be_i$ for $x \in \Step_j$ with $M$ as in the above lemma and $m=m(x)$ defined so that $\ret{1}{x}=4m+1$.
Note then that $r_1(\chi_i;x)=a_{i,j} m(x)+b_{i,j}$ for some $a_{i,j},b_{i,j} \in \Z$ coming from the entries of $M$. 
Then, by definition of $C(\nu,1)$, for all $i, j \in \{1, \ldots, 6\}$ and all $B \subset \Step_j$ we have
\begin{equation}
\name{eq:proof1}
\int_{\rho^{-1}(B)} a_{i,j} m(x)+b_{i,j}~d\nu(x)=\int_{B} C(\nu, 1)(\chi_i)(x)~d\nu \circ \rho^{-n}(x).
\end{equation}
By definition of $m_j$ and because we defined $m(x)$ so that $\ret{1}{x}=4m(x)+1$, 
\begin{equation}
\int_{\rho^{-1}(B)} 4m(x)+1~d\nu(x)=\int_{B} 4m_j+1~d\nu \circ \rho^{-n}(x) \quad \textrm{for all Borel $B \subset \Step_j$}.
\end{equation}
Therefore, we see that $C(\nu, 1)(\chi_i)(x)=a_{i,j} m_j+b_{i,j}$ satisfies the equation \ref{eq:proof1}. This determines $C(\nu, 1)(\chi_i)(x)$ by a.e.-uniqueness of conditional expectations.
Finally, we can extend to all of the subspace $L \subset \sM$ by linearity of the operator $C(\nu,1)$. 
\end{proof}

Observe that to evaluate the integral of a function of the form $\epsilon(\bp)$ with respect to some measure $\nu$, it suffices to know the measures of the step classes. That is, 
\begin{equation}
\name{eq:dot product integral}
\int_X \epsilon(\bp)(x)~d\nu=\sum_{i=1}^6 \nu(\Step_i) \bp_i.
\end{equation}
We give formulas for these measures in two cases below.

\begin{proposition}
\name{prop:step characterization of P4}
Let $\nu=\mu \times \mu' \times \mu_N$ where $\mu$ and $\mu'$ are shift invariant measure on $\Omega_\pm$. Then,
$$\nu(\Step_1)=\frac{1}{2} \mu\big(\sC(-+)\big), \quad 
\nu(\Step_2)=\frac{1}{2} \mu\big(\sC(+-)\big), \and
\nu(\Step_3)=\frac{1}{2} \mu\big(\sC(--) \cup \sC(++)\big).$$
$$\nu(\Step_4)=\frac{1}{2} \mu'\big(\sC(-+)\big), \quad 
\nu(\Step_5)=\frac{1}{2} \mu'\big(\sC(+-)\big), \and
\nu(\Step_6)=\frac{1}{2} \mu'\big(\sC(--) \cup \sC(++)\big).$$
In addition, for the restriction of $\nu$ to $P_4$ we have 
$$\nu(\Step_1 \cap P_4)=\nu(\Step_5 \cap P_4)=\mu \big( \sC(-+)\big) \mu' \big( \sC(+-)\big), \and$$
$$\nu(\Step_2 \cap P_4)=\nu(\Step_4 \cap P_4)=\mu \big( \sC(+-)\big) \mu' \big( \sC(-+)\big).$$
Also $\nu(\Step_3 \cap P_4)=\nu(\Step_6 \cap P_4)=0$. 
\end{proposition}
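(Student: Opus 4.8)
The plan is to compute all eight quantities by decomposing each relevant set according to the direction $\v \in N$ and the local values of $\omega,\omega'$ near the origin, and then integrating against $\nu=\mu\times\mu'\times\mu_N$, using shift-invariance of $\mu,\mu'$ together with $\mu_N(\{\v\})=\frac14$.

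For the $\Step_i$ measures I would unwind the definition of step class. A point $x=(\omega,\omega',(a,b))$ is a horizontal step exactly when $\v\in\{(0,1),(0,-1)\}$, and then its defining two-letter word is $\omega_0\omega_1$ when $sb=1$ and $\omega_{-1}\omega_0$ when $sb=-1$, where $s=\omega_0\omega'_0$. Fixing the target word (say $-+$ for $\Step_1$), each choice of $b\in\{\pm1\}$ and the induced sign of $sb$ pins down a constraint on two consecutive entries of $\omega$ (either $\omega_{-1}\omega_0=-+$ or $\omega_0\omega_1=-+$) together with a prescribed value of $\omega'_0$. Listing the four resulting cases and integrating, the factor $\frac14$ appears for each $\v$; shift-invariance gives $\mu\big(\sC(-\widehat{+})\big)=\mu\big(\sC(\widehat{-}+)\big)=\mu\big(\sC(-+)\big)$; and the two complementary prescribed values of $\omega'_0$ sum to total $\mu'$-mass $1$. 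The net result is $\frac12\mu\big(\sC(-+)\big)$, and the same bookkeeping yields $\Step_2$, $\Step_3$ (matching word, giving $\sC(--)\cup\sC(++)$), and, with the roles of $\omega,\omega',a,b$ interchanged, $\Step_4,\Step_5,\Step_6$.

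For the intersections with $P_4$, I would use Proposition \ref{prop:stable}: $x\in P_4$ means the curve through $\v$ at the origin is a loop visiting four squares, i.e.\ a small loop encircling one of the four corners of the square at the origin, with the corner determined by $\v$ and $\tau(0,0)=\omega_0\omega'_0$. Writing out the four tiles in the $2\times2$ block around any corner and using $\tau(m,n)=\omega_m\omega'_n$ shows that such a loop forces $\omega_{1}=-\omega_0$ (or $\omega_{-1}=-\omega_0$) and $\omega'_{1}=-\omega'_0$ (or $\omega'_{-1}=-\omega'_0$); in particular both local two-letter words are non-matching, which immediately gives $\nu(\Step_3\cap P_4)=\nu(\Step_6\cap P_4)=0$. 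For $\Step_1\cap P_4$ I would revisit the four cases describing $\Step_1$: in each, the local arc determined by $\v$ and $\tau(0,0)$ hugs a definite corner, so membership in $P_4$ is equivalent to the four-tile loop condition for that corner, which (given the constraints already defining $\Step_1$) reduces to one additional equation prescribing a further entry of $\omega'$, upgrading the $\omega'_0$-constraint to the word $+-$. Each case then contributes $\frac14\mu\big(\sC(-+)\big)\mu'\big(\sC(+-)\big)$, and the four sum to $\mu\big(\sC(-+)\big)\mu'\big(\sC(+-)\big)$; the computation for $\Step_2\cap P_4$ is identical with $-+$ and $+-$ swapped on the $\omega$-side.

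Finally, the equalities $\nu(\Step_1\cap P_4)=\nu(\Step_5\cap P_4)$ and $\nu(\Step_2\cap P_4)=\nu(\Step_4\cap P_4)$ are cleanest from $\Phi$-invariance. Since $P_4$ is a union of period-four orbits, tracing $\Phi$ once around such an orbit shows its four points carry step classes in the cyclic pattern $(1,5,1,5)$ or $(2,4,2,4)$; hence $\Phi$ carries $\Step_1\cap P_4$ bijectively onto $\Step_5\cap P_4$ and $\Step_2\cap P_4$ onto $\Step_4\cap P_4$, and invariance of $\nu$ yields the claimed equalities (this also re-derives the $\Step_4,\Step_5$ values without extra computation). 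I expect the main obstacle to be the bookkeeping in the $P_4$ step: correctly reading off, for each $\v$ and each admissible corner, the tile orientations in the surrounding $2\times2$ block and translating the loop condition into constraints on the three relevant entries of $\omega$ and of $\omega'$, while keeping the step-class assignment consistent with the first part. Everything else is routine integration.
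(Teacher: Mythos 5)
Your proposal is correct and follows essentially the same route as the paper: the $\nu(\Step_i)$ values come from the direction-by-direction bookkeeping over $\v$ and the local entries (which the paper dismisses as ``trivial from the definitions''), and the $P_4$ computation is exactly the paper's case analysis of the four local loop pictures for a $-+$-horizontal step, each contributing $\frac{1}{4} \mu\big(\sC(-+)\big)\mu'\big(\sC(+-)\big)$. Your only departure is deducing $\nu(\Step_1 \cap P_4)=\nu(\Step_5\cap P_4)$ and $\nu(\Step_2 \cap P_4)=\nu(\Step_4\cap P_4)$ from $\Phi$-invariance and the $(1,5,1,5)$/$(2,4,2,4)$ orbit pattern, where the paper just repeats the direct computation (``the other cases are similar''); this is a harmless, slightly cleaner way to finish.
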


\subsection{Proofs of the Renormalization Theorems}
\name{sect:proofs of renormalization}
In this section we prove the renormalization theorems of the previous section. Since this section contains no substantially new results, it may be skipped by a reader disinterested in
these proofs. If the reader just wishes to understand why the system is renormalizable, he/she may read the first few paragraphs of the section and attempt to understand Figures \ref{fig:renormalize} and \ref{fig:even rectangles}.

Our proofs are primarily based on analysis of tilings rather than the map $\Phi:X\to X$. This is motivated by the natural point of view that
the dynamics of $\Phi$ correspond to following curves in the associated tiling. All proofs in this section utilize this philosophy. Therefore, we fix a tiling $\tiling{\tau_{\omega, \omega'}}$ once and for all. 

In the definition of the collapsing map, we utilized a subset $K(\omega)\subset \Z$ which indicated
the places to remove symbols under the collapsing map. See equation \ref{eq:K}, and observe that
$K(\omega)$ is well defined for all $\omega \in \Omega_\pm$. We define
$\barK=K(\omega) \times K(\omega')$ and call this set the {\em(centers of the) kept squares}. 

The squares with centers in $\Z^2 \sm \barK$ form a family of rows and columns. These are the gray squares in Figures \ref{fig:renormalize} and \ref{fig:even rectangles}. On the level of tilings,
the tiling associated to the image $\rho(\omega, \omega', \v)$ is $\tiling{\tau_{c(\omega), c(\omega')}}$.
This tiling can be obtained from $\tiling{\tau_{\omega, \omega'}}$ by removing all columns consisting of squares whose centers lie in $\Z^2 \sm \barK$.
Then slide the remaining columns together to reconstruct a Truchet tiling. The remaining squares whose centers were in $\Z^2 \sm \barK$ consist of a family of rows of squares. Again, remove these and slide the rows together. 

\begin{remark}
\name{rem:collapsibility and proofs}
Observe that the renormalized tiling is defined modulo translation if $\omega$ and $\omega'$ are
unbounded-collapsible. To define the new tiling precisely, we need $\omega$ and $\omega'$ to also be zero-collapsible.
In this section we will only need $\omega$ and $\omega'$ to be
unbounded-collapsible.\end{remark}

\begin{figure}
\begin{center}
\includegraphics[width=3.5in]{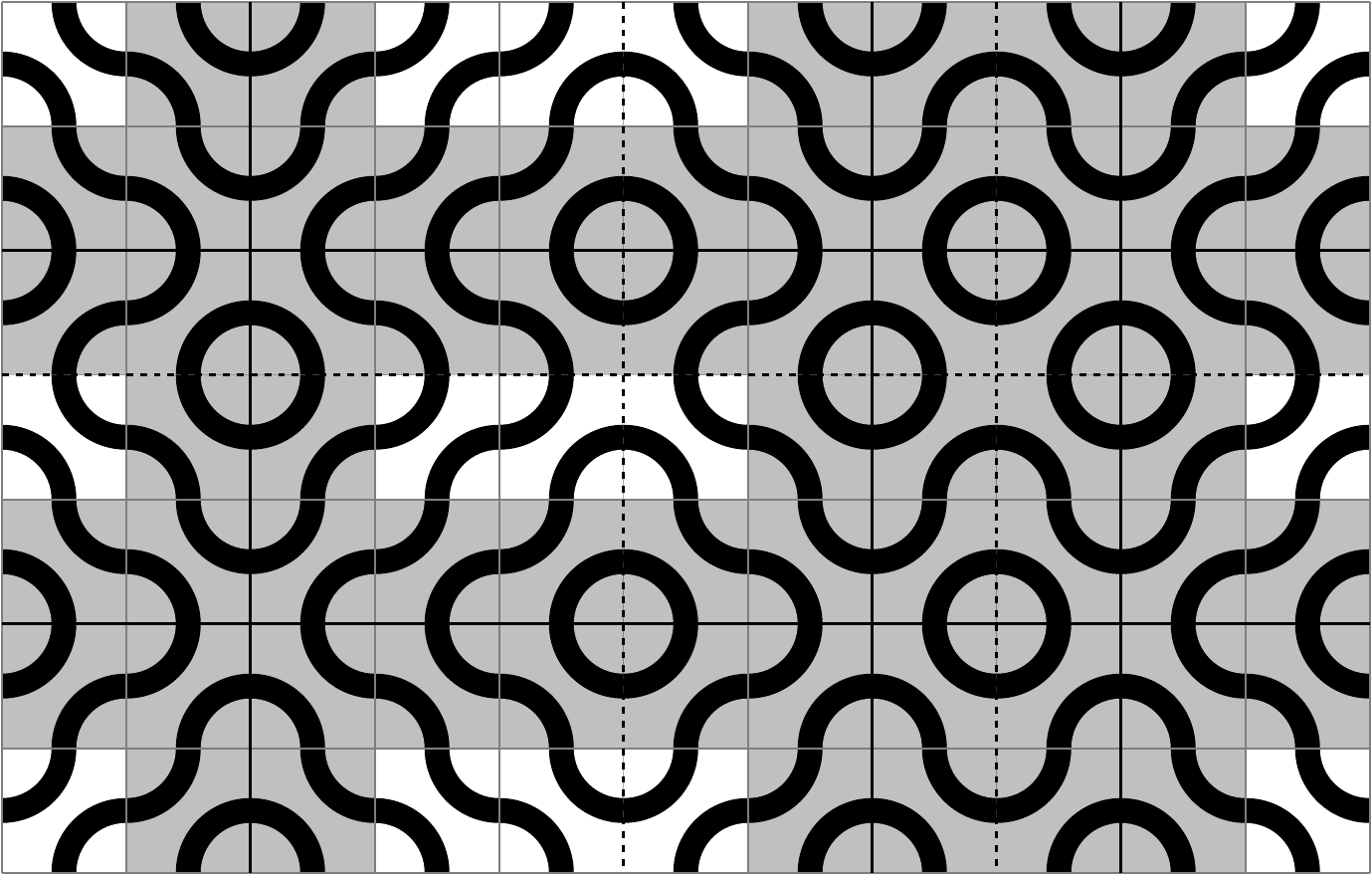}
\caption{The set $\barK$ consists of the indices of white squares.}
\name{fig:even rectangles}
\end{center}
\end{figure}

To analyze this process, we introduce the following definition.

\begin{definition}
Let $m$ be an integer. The line $x=m+\frac{1}{2}$ is a {\em vertical $-+$-dividing line} if
$\omega_{m}=-1$ and $\omega_{m+1}=1$. Similarly, $x=m+\frac{1}{2}$ is a {\em vertical $+-$-dividing line} if
$\omega_{m}=1$ and $\omega_{m+1}=-1$. The line $y=m+\frac{1}{2}$ is a {\em horizontal $-+$- or $+-$-dividing line} if
$\omega'_{m}=-1$ and $\omega'_{m+1}=1$ or $\omega'_{m}=1$ and $\omega'_{m+1}=-1$, respectively.
\end{definition}

The $-+$-dividing line for the tiling shown in figure \ref{fig:even rectangles} are drawn as darkened solid lines.
The $+-$-dividing lines are shown as dashed lines.
The following can be observed by the reader.

\begin{proposition}
\name{prop:barK}
The squares indexed by elements of $\Z^2 \sm \barK$ are those squares in the $1$-neighborhood of a $-+$-dividing line.
\end{proposition}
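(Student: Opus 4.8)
The plan is to unfold the definitions of $K(\omega)$, $K(\omega')$, and $S$ directly and match them against the notion of a dividing line; no facts about the dynamics of $\Phi$ are needed, only the indexing conventions. First I would record what membership in $S=\sC(\widehat{-}+) \cup \sC(-\widehat{+})$ means after shifting. By the definition of the cylinder sets (with the hat marking the zeroth entry) together with the shift $(\sigma^k\omega)_n=\omega_{n+k}$, we have $\sigma^k(\omega)\in\sC(\widehat{-}+)$ exactly when $\omega_k=-1$ and $\omega_{k+1}=1$, and $\sigma^k(\omega)\in\sC(-\widehat{+})$ exactly when $\omega_{k-1}=-1$ and $\omega_k=1$. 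Since $k\notin K(\omega)$ if and only if $\sigma^k(\omega)\in S$ by equation \ref{eq:K}, the index $k$ fails to lie in $K(\omega)$ precisely when a $-+$ pattern occupies positions $(k,k+1)$ or positions $(k-1,k)$.

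Next I would translate these two patterns into the language of vertical dividing lines. A vertical $-+$-dividing line sits at $x=m+\frac{1}{2}$ exactly when $\omega_m=-1$ and $\omega_{m+1}=1$. Hence the first pattern says $x=k+\frac{1}{2}$ is a dividing line with column $k$ bordering it on the left, and the second says $x=k-\frac{1}{2}$ is a dividing line with column $k$ bordering it on the right. In either case column $k$ is one of the two columns immediately bordering a vertical $-+$-dividing line, and these two bordering columns are exactly the columns in the $1$-neighborhood of the line: their centers lie at distance $\frac{1}{2}$ from $x=m+\frac{1}{2}$, whereas the next columns out lie at distance $\frac{3}{2}$. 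Thus $k\notin K(\omega)$ if and only if column $k$ lies in the $1$-neighborhood of some vertical $-+$-dividing line, and the identical argument applied to $\omega'$ shows $n\notin K(\omega')$ if and only if row $n$ lies in the $1$-neighborhood of some horizontal $-+$-dividing line.

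Finally I would combine the two coordinates using $\barK=K(\omega)\times K(\omega')$. A pair $(j,n)$ lies in $\Z^2\sm\barK$ if and only if $j\notin K(\omega)$ or $n\notin K(\omega')$, which by the previous paragraph happens exactly when column $j$ borders a vertical $-+$-dividing line or row $n$ borders a horizontal one; this is precisely the statement that the square indexed by $(j,n)$ lies in the $1$-neighborhood of a $-+$-dividing line, proving the proposition. The only point requiring genuine care is bookkeeping: keeping the hatted index in $\sC(\widehat{-}+)$ versus $\sC(-\widehat{+})$ straight under the shift, and confirming that the two columns (or rows) bordering a dividing line coincide with the columns (or rows) in its $1$-neighborhood. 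Both are routine once the indexing conventions are pinned down, so I do not anticipate any substantive obstacle.
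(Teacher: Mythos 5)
Your proof is correct: the paper offers no argument for this proposition (it is stated as something ``the reader can observe''), and your unfolding of $K(\omega)$, the shifted cylinder sets $\sC(\widehat{-}+)$, $\sC(-\widehat{+})$, and the product structure $\barK=K(\omega)\times K(\omega')$ is exactly the routine verification being left implicit. The indexing bookkeeping (the two cases $\omega_k=-1,\omega_{k+1}=1$ and $\omega_{k-1}=-1,\omega_k=1$ corresponding to the column lying left or right of the dividing line) is handled correctly, so there is nothing to add.
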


We can also connect these dividing lines to the paths which close up after visiting exactly four squares.

\begin{proposition}
\name{prop:loop4}
An arc of a tile in the tiling $\tiling{\tau_{\omega, \omega'}}$ is part of a closed path which visits only four squares if and only if the arc joins two dividing lines with differing signs (e.g. the arc starts on a $-+$-horizontal dividing line and ends on $+-$-vertical dividing line.)
\end{proposition}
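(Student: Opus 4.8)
The plan is to build a dictionary between the two tile types and the corners in which their arcs sit, and then reduce both directions of the equivalence to a single local computation around a lattice vertex. First I would read off the arc structure of the tiles from the definition of $\Phi_0$ in equation (\ref{eq:Phi}). Tracking inward normals shows that $T_1$ carries the arcs joining the left edge to the top edge and the bottom edge to the right edge (quarter-circles in the top-left and bottom-right corners), while $T_{-1}$ carries the arcs joining the left edge to the bottom edge and the top edge to the right edge (bottom-left and top-right corners). The statement I want to extract is the dictionary: a square has an arc in its top-right or bottom-left corner iff it carries $T_{-1}$, and an arc in its top-left or bottom-right corner iff it carries $T_1$. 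Since every arc is a quarter-circle joining a vertical edge to the adjacent horizontal edge, each arc $\alpha$ meets exactly one vertical grid line $L_V\colon x=m+\tfrac12$ and one horizontal grid line $L_H\colon y=n+\tfrac12$; these are the two lines $\alpha$ ``joins,'' and they meet at the corner $v=(m+\tfrac12,n+\tfrac12)$ of $\alpha$'s square.

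Next I would pin down the shape of a length-four loop. Because curves of the tiling are simple and alternate crossing vertical and horizontal edges, and in each square visited the curve turns by $\pm 90^\circ$, a closed curve visiting exactly four squares makes four turns totalling $\pm 360^\circ$. Hence all four turns have the same sign and the loop encircles a single vertex $v$, visiting precisely the four squares of the $2\times 2$ block sharing $v$ and using the corner arc at $v$ in each. By Proposition \ref{prop:stable} these are exactly the stable period-four orbits.

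The heart of the proof is then one local computation at $v=(m+\tfrac12,n+\tfrac12)$. Using the dictionary, the four corner arcs at $v$ are all present iff $\tau(m,n)=\tau(m+1,n+1)=-1$ and $\tau(m+1,n)=\tau(m,n+1)=1$, i.e.
\[
\omega_m\omega'_n=\omega_{m+1}\omega'_{n+1}=-1,\qquad
\omega_{m+1}\omega'_n=\omega_m\omega'_{n+1}=1.
\]
These four equations are equivalent to $\omega_{m+1}=-\omega_m$ and $\omega'_{n+1}=-\omega'_n$ together with the single sign condition $\omega_m\omega'_n=-1$, which splits into the two cases $(\omega_m,\omega_{m+1},\omega'_n,\omega'_{n+1})=(+,-,-,+)$ and $(-,+,+,-)$. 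In the first case $L_V$ is a $+-$-dividing line and $L_H$ is a $-+$-dividing line; in the second the signs are reversed. In both cases $L_V$ and $L_H$ are dividing lines of differing sign, and conversely any pair of opposite-sign vertical and horizontal dividing lines meeting at $v$ falls into one of these two cases. A direct trace (enter the block at the edge midpoint on $L_V$ below $v$ and follow the four corner arcs) shows the four arcs concatenate into a single closed curve, so the conditions are not merely necessary but actually produce the loop.

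Finally I would assemble the two directions. If $\alpha$ lies on a length-four loop, the loop encircles some $v$ and $\alpha$ is a corner arc there, so all four corner conditions hold and, by the computation, $\alpha$ joins the opposite-sign lines $L_V,L_H$. Conversely, if $\alpha$ joins a vertical and a horizontal dividing line of differing sign, they meet at the corner $v$ of $\alpha$'s square; the computation supplies all four corner arcs, which link into a length-four loop through $\alpha$. I expect the only real friction to be bookkeeping: getting the tile-to-corner dictionary exactly right from the $\Phi_0$ convention, and verifying that the four arcs genuinely form one loop rather than two shorter pieces. Everything else reduces to the single sign computation displayed above.
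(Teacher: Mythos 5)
Your proposal is correct and follows essentially the same route as the paper's proof: both reduce the statement to the observation that a four-square loop forces the sign conditions $\omega_m\omega'_n=\omega_{m+1}\omega'_{n+1}=-1$ and $\omega_{m+1}\omega'_n=\omega_m\omega'_{n+1}=1$ at a common vertex, which split into exactly the two cases $(+,-,-,+)$ and $(-,+,+,-)$, i.e.\ a vertical and a horizontal dividing line of opposite sign through that vertex. The extra material you supply (the tile-to-corner dictionary and the turning-angle argument that a length-four loop encircles a single vertex) is just a careful spelling-out of what the paper compresses into ``such a path corresponds to a choice of $m,n$'' together with its figure.
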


This statement also implies statement (1) of the \hyperref[thm:renormalization]{Renormalization Theorem}.

\begin{proof}
Consider such a path in $\tiling{\tau_{\omega, \omega'}}$. Observe that such a path corresponds to 
a choice of $m,n \in \Z$ for which $\omega_m \omega'_n=-1=\omega_{m+1} \omega'_{n+1}$
and $\omega_{m+1} \omega'_n=1=\omega_m \omega'_{n+1}$. Therefore either
$\omega_m=1=\omega'_{n+1}$ and $\omega_{m+1}=-1=\omega'_{n}$, or $\omega_{m+1}=1=\omega'_{n}$ and 
$\omega_m=-1=\omega'_{n+1}$. See the figure below, where values of $\omega$ are written below the tiles,
and values of $\omega'$ are written to the left.
\begin{center}
\includegraphics[height=1in]{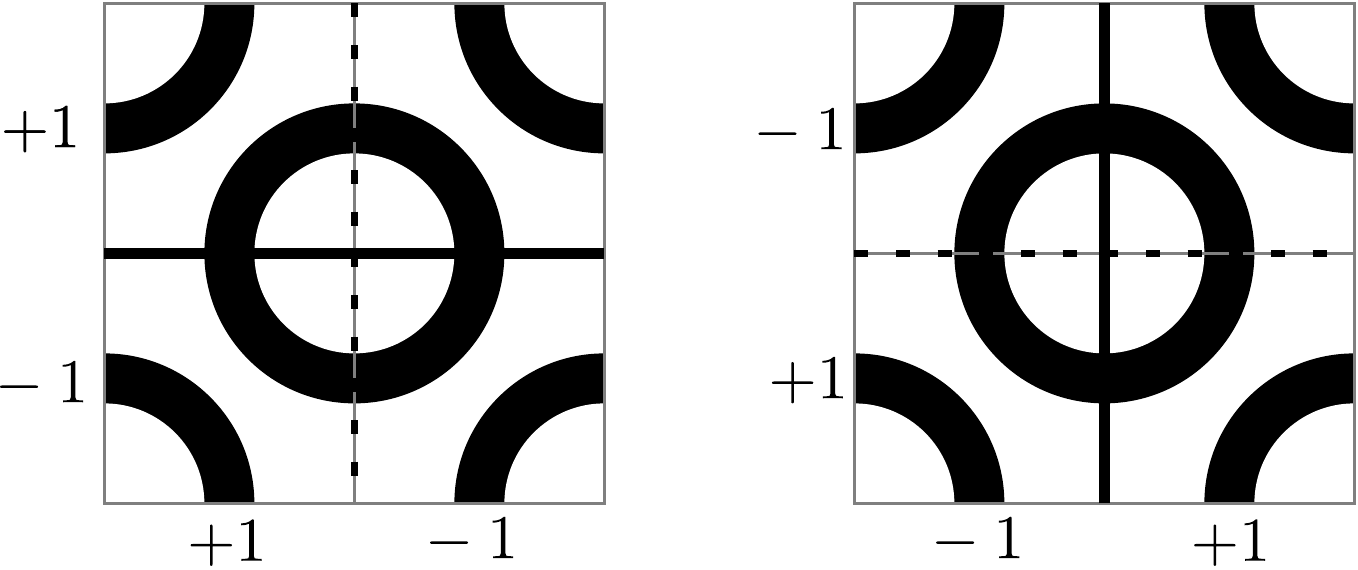}
\end{center}
In both cases, two dividing lines of opposite sign description pass through the center of the circular path.
\end{proof}

We can also use this observation to prove Proposition \ref{prop:step characterization of P4}.
\begin{proof}[Proof of Proposition \ref{prop:step characterization of P4}]
The values of $\nu(\Step_i)$ follow trivially from the definitions of the step classes. We concentrate on 
the intersections of the step classes with $P_4$. 
Suppose $x \in \Step_1 \cap P_4$, i.e. $x$ is a $-+$-horizontal step. Then, there are four possible local pictures
for the loop associated to $x$. These possibilities are shown below.
\begin{center}
\includegraphics[height=1in]{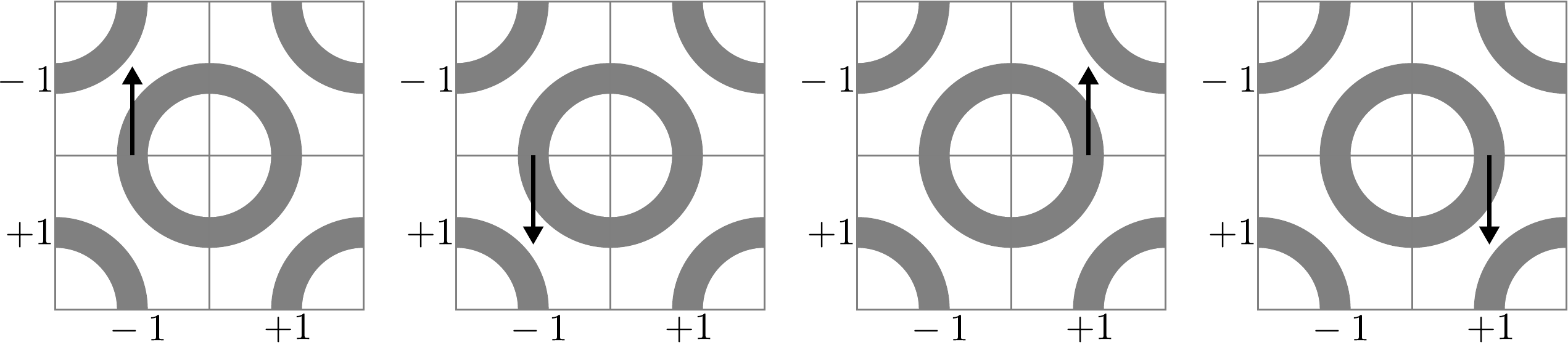}
\end{center}
Above, the arrow lies in the square which should be centered at the origin and represents the choice of the normal vector. Observe that the probability of any of these local pictures occurring is given by $\frac{1}{4} \mu \big( \sC(-+)\big) \mu' \big( \sC(+-)\big)$, with the factor of $\frac{1}{4}$ coming from $\mu_N$. The other cases are similar. 
\end{proof}

The following lemma is the main observation in the proofs of the remaining Renormalization results. 

\begin{lemma}[Crossing dividing lines]
\name{lem:crossing dividing lines}
Let $\alpha$ be an arc of a square whose center lies in $\Z^2 \sm \barK$. If $\alpha$ is not part of a loop
which visits only four squares, then $\alpha$ is part of a path $\gamma$ contained in a $2 \times 2$ square which visits four squares and
joins the two boundaries components of either the $1$-neighborhood of a $-+$-horizontal or the $1$-neighborhood of a $-+$-vertical dividing line. 
In the first case, the endpoints of $\gamma$ differ by a vertical translation by $2$, the two endpoints do not lie in the  $1$-neighborhood of a $-+$-vertical dividing line,
and the values of $\omega$ taken on the $x$-coordinates of centers of the four squares visited by $\gamma$ agree.
In the second case, the endpoints of $\gamma$ differ by a horizontal translation by $2$, the two endpoints do not lie in the  $1$-neighborhood of a $-+$-horizontal dividing line,
and the values of $\omega'$ taken on the $y$-coordinates of centers of the four squares visited by $\gamma$ agree.
\end{lemma}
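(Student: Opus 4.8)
The plan is to reduce the statement to a purely local analysis of the two tiles straddling a single $-+$ dividing line, using Proposition \ref{prop:loop4} to separate the crossing behaviour from the $4$-loop behaviour. First I would record the connection rules of the tiles dictated by their slopes: $T_{+1}$ joins its left edge to its top edge and its bottom edge to its right edge, while $T_{-1}$ joins left to bottom and top to right. By Proposition \ref{prop:barK} the square containing $\alpha$ lies in the $1$-neighborhood of some $-+$ dividing line, and by the symmetry that transposes the coordinates $(x,y)\mapsto(y,x)$ and exchanges $\omega$ and $\omega'$ (which preserves the tiling, the tile types, and the $-+$ convention) I may assume this line is a vertical $-+$ dividing line $x=m+\tfrac12$, so the square sits in column $m$ or $m+1$ with $\omega_m=-1$ and $\omega_{m+1}=+1$. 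The horizontal case then follows by applying the transposition.

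The key observation is that in every row $\ell$ the pair of tiles in columns $m$ and $m+1$ is $\big(T_{-\omega'_\ell},\,T_{\omega'_\ell}\big)$, a pair of opposite type. Following the curve through $\alpha$, the oppositeness forces a strand that enters the column pair through a vertical (side) edge to be deflected immediately through a horizontal edge into an adjacent row $\ell'=\ell\pm1$, where it meets $T_{\pm\omega'_{\ell'}}$. If $\omega'_{\ell'}=\omega'_\ell$, this tile has the chirality needed to deflect the strand back across to the opposite column, so it exits the far vertical boundary of the strip; this is the path $\gamma$, lying in the $2\times2$ block on rows $\{\ell,\ell'\}$ and columns $\{m,m+1\}$, visiting four squares and joining the boundary components $x=m-\tfrac12$ and $x=m+\tfrac32$. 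If instead $\omega'_{\ell'}\neq\omega'_\ell$, then $y=\tfrac{\ell+\ell'}{2}$ is a horizontal dividing line of the opposite ($+-$) sign meeting the vertical $-+$ line, and by Proposition \ref{prop:loop4} the strand closes into a $4$-loop, precisely the case excluded by hypothesis.

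Having isolated the crossing case, I would read off the three remaining assertions from the trace. That the endpoints differ by a horizontal translation by $2$, and that the $\omega'$-values on the two rows of the block agree, are both immediate from the shape of $\gamma$ together with $\omega'_\ell=\omega'_{\ell'}$. The delicate point is that the endpoints avoid every horizontal $-+$ neighborhood: here I would use the slope conventions to show both endpoints exit on the \emph{same} row, namely the top row of the block when $\omega'_\ell=+1$ and the bottom row when $\omega'_\ell=-1$. In the first case that row cannot be the left member of a $-+$ pair (its value is $+1$) nor the right member (the block row just below it also has value $+1$); the second case is symmetric. Hence the endpoint row lies in $K(\omega')$ and is not in a horizontal $-+$ neighborhood.

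The main obstacle is exactly this last bookkeeping step: one must confirm that the $\pm1$-slope chirality of the Truchet tiles, and not merely the oppositeness of the two columns, always routes the crossing strand out on the row whose membership in $K(\omega')$ is guaranteed by the matching of the two block rows. This is the only place where the precise orientation of $T_{\pm1}$ is used, and it is what links the geometric crossing to the combinatorial definition of the collapsing map. Everything else is a finite, if slightly tedious, enumeration of the possible entry edges of $\alpha$, organized by the value of $\omega'$ on the two relevant rows and rendered uniform by the single coordinate-transposition symmetry.
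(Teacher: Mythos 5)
Your reduction to a vertical $-+$-dividing line via the transposition $(x,y)\mapsto(y,x)$ is sound (and is the same symmetry the paper uses), and your analysis of the \emph{matched} case -- the shape of the crossing path, the endpoints landing on a common row, and that row avoiding horizontal $-+$-neighborhoods -- checks out. But the central dichotomy, ``matched deflected row gives a crossing, mismatched gives a $4$-loop,'' is false, and the sign claim it rests on is backwards. Run through the four entry configurations with your own connection rules: a strand entering column $m$ from the left at row $\ell$ meets $T_{-\omega'_\ell}$ and deflects up when $\omega'_\ell=-1$, down when $\omega'_\ell=+1$ (symmetrically from the right). In \emph{every} case a mismatch $\omega'_{\ell'}\neq\omega'_\ell$ makes $y=\tfrac{\ell+\ell'}{2}$ a $-+$-horizontal dividing line -- the \emph{same} sign as the vertical line, not $+-$ -- so Proposition \ref{prop:loop4} yields no $4$-loop; instead the strand exits the strip on the side it entered. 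Concretely, take $\omega_{m-1}=\omega_m=-1$, $\omega_{m+1}=+1$, $\omega'_\ell=-1$, $\omega'_{\ell+1}=+1$: the strand entering $(m,\ell)$ from the left exits leftward at row $\ell+1$ after two squares, lies on no $4$-loop, and the path $\gamma$ the lemma promises for its arcs crosses the $1$-neighborhood of the \emph{horizontal} $-+$-line $y=\ell+\tfrac12$ through columns $m-1,m$ (where the $\omega$-values agree) -- a path leaving the two-column strip and invisible to your local analysis. Worse, whether the $4$-loop alternative occurs here is governed by $\omega_{m-1}$, a symbol outside the strip (if $\omega_{m-1}=+1$, the $+-$-vertical line $x=m-\tfrac12$ meets the $-+$-horizontal line and a $4$-loop does form), so no analysis confined to columns $m$ and $m+1$ can decide it.

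This is precisely why the paper's proof splits into three cases according to whether the square with center in $\Z^2 \sm \barK$ lies in the $1$-neighborhood of only a vertical $-+$-line, only a horizontal one, or both. In the ``vertical only'' case the match of the deflected row is automatic -- the case hypothesis forbids the adjacency $\omega'_\ell\omega'_{\ell+1}=-+$ -- and the $4$-loop alternative enters through an adjacent $+-$-horizontal line, not through the mismatched $-+$-line of your dichotomy. The doubly-covered case, which your proposal never isolates, is where Proposition \ref{prop:loop4} is actually invoked: there the paper splits on the row on the \emph{other} side of the configuration ($\omega'_{n-1}=+1$ produces a $+-$-line and hence a $4$-loop, while $\omega'_{n-1}=-1$ restores a match and gives the crossing through rows $n-1,n$). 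To repair your argument you must add this case distinction on which neighborhoods contain the square; the two-column ``oppositeness'' observation alone cannot deliver the lemma.
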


The lemma was crafted to indicate that the local picture of the tiling at the squares visited by $\gamma$ is determined by one of the four pictures below.
The possible curves $\gamma$ are drawn in black below. The left two cases cross $-+$-vertical dividing lines.
\begin{center}
\includegraphics[height=1in]{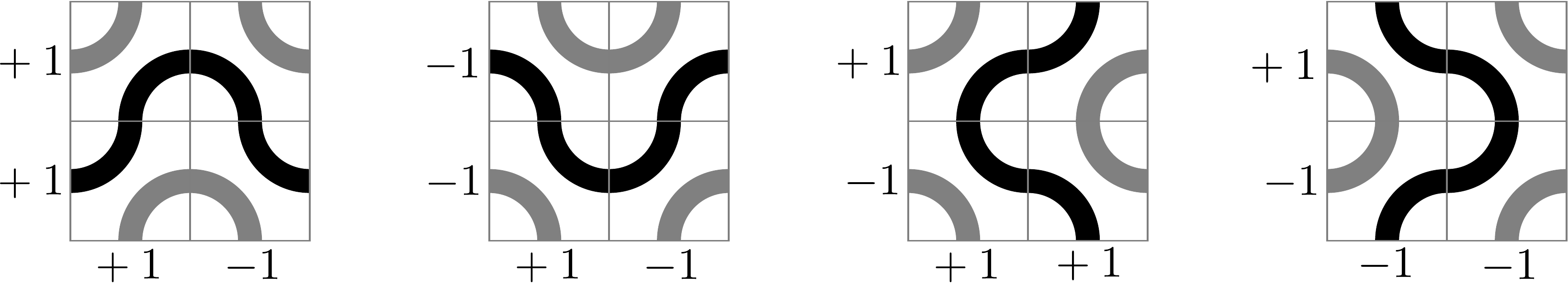}
\end{center}

\begin{proof}
Since $\alpha$ lies a square whose center lies in $\Z^2 \sm \barK$, it lies in the $1$-neighborhood 
of either a $-+$-horizontal dividing line or a $-+$-vertical dividing line. See Proposition \ref{prop:barK}.
We break into cases based on this. 

First suppose that $\alpha$ belongs to a $1$-neighborhood of a $-+$-vertical dividing line but not to a $1$-neighborhood of a $-+$-horizontal dividing line. Assume the center of the square containing $\alpha$ is $(m,n)$, and that this square lies to the left of this vertical line. Then, $\omega_m=-1$ and $\omega_{m+1}=1$. 
Consider the case when $\omega'_n=-1$. Since the square centered at $(m,n)$ is not in the $1$-neighborhood of 
a $-+$-horizontal dividing line, we have $\omega'_{n+1}=-1$. We draw two local pictures, with the left case
$\omega'_{n-1}=1$ and the right case $\omega'_{n-1}=1$. These cases are drawn on the left and right below, respectively, with a dot placed at $(m,n)$. 
\begin{center}
\includegraphics[height=1.25in]{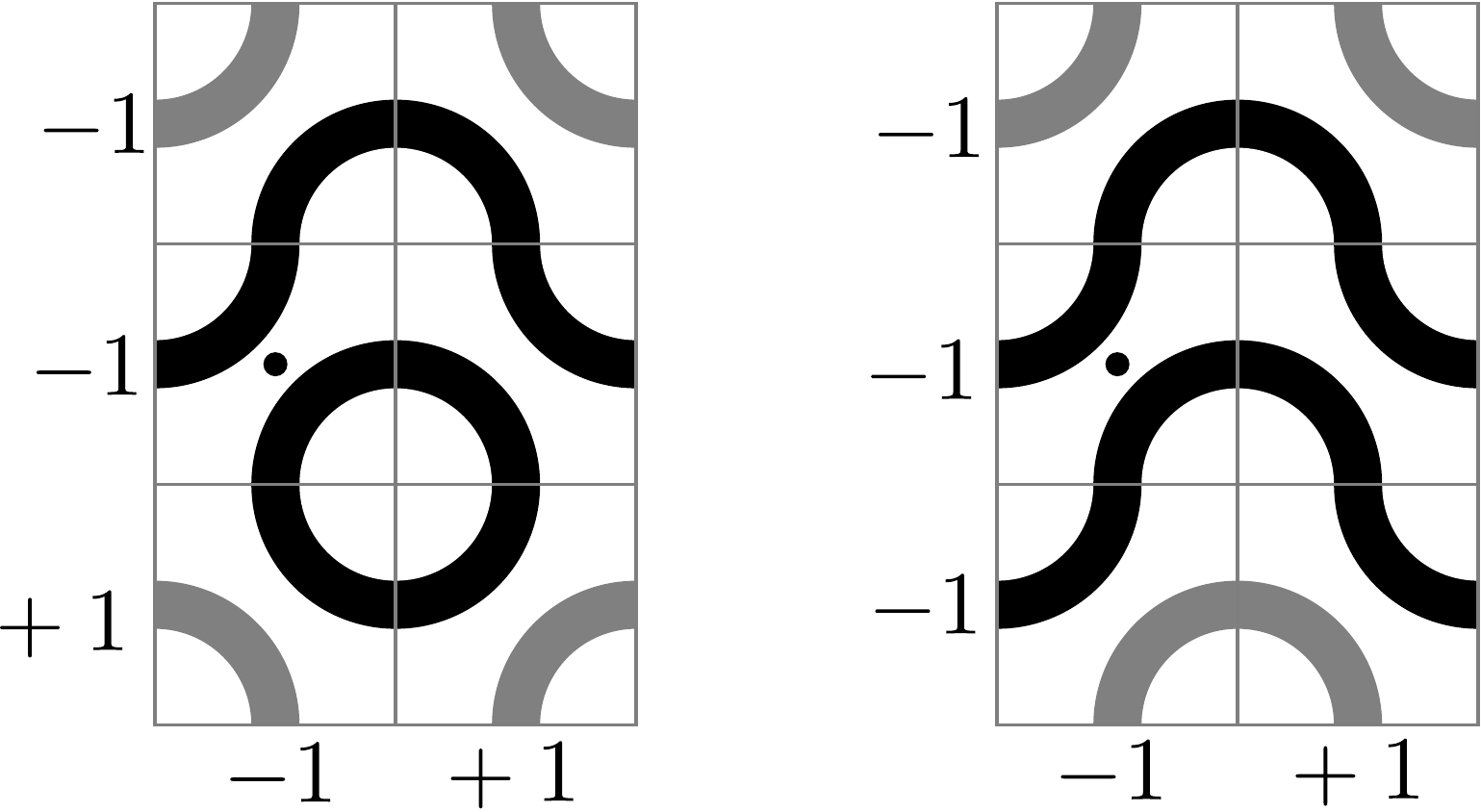}
\end{center}
The arcs $\alpha$ under consideration are the ones that start in the squares with a dot. Observe that unless
the arc is part of a loop of length four, the four black arcs connected to $\alpha$ join opposite sides of the 
$1$-neighborhood of the $-+$-vertical dividing line (the central vertical line). 
The case of $\omega'_n=1$ is similar, and produces the same pictures but reflected with respect to the $x$-axis.
The case where $(m,n)$ is to the right of the dividing line is also similar: just move the dot one square to the right.

The case where $\alpha$ belongs to the neighborhood of a $-+$-horizontal dividing line and
not to the neighborhood of a $-+$-vertical dividing line differs by a reflection in the line $y=x$. 

Finally, we suppose that the square with center $(m,n)$ containing $\alpha$ lies in both the $1$-neighborhood of a $-+$-horizontal dividing line and the $1$-neighborhood of a $-+$-vertical dividing line. The intersections of the $1$-neighborhoods form a $2 \times 2$ square as depicted on the left below.
\begin{center}
\includegraphics[height=1.25in]{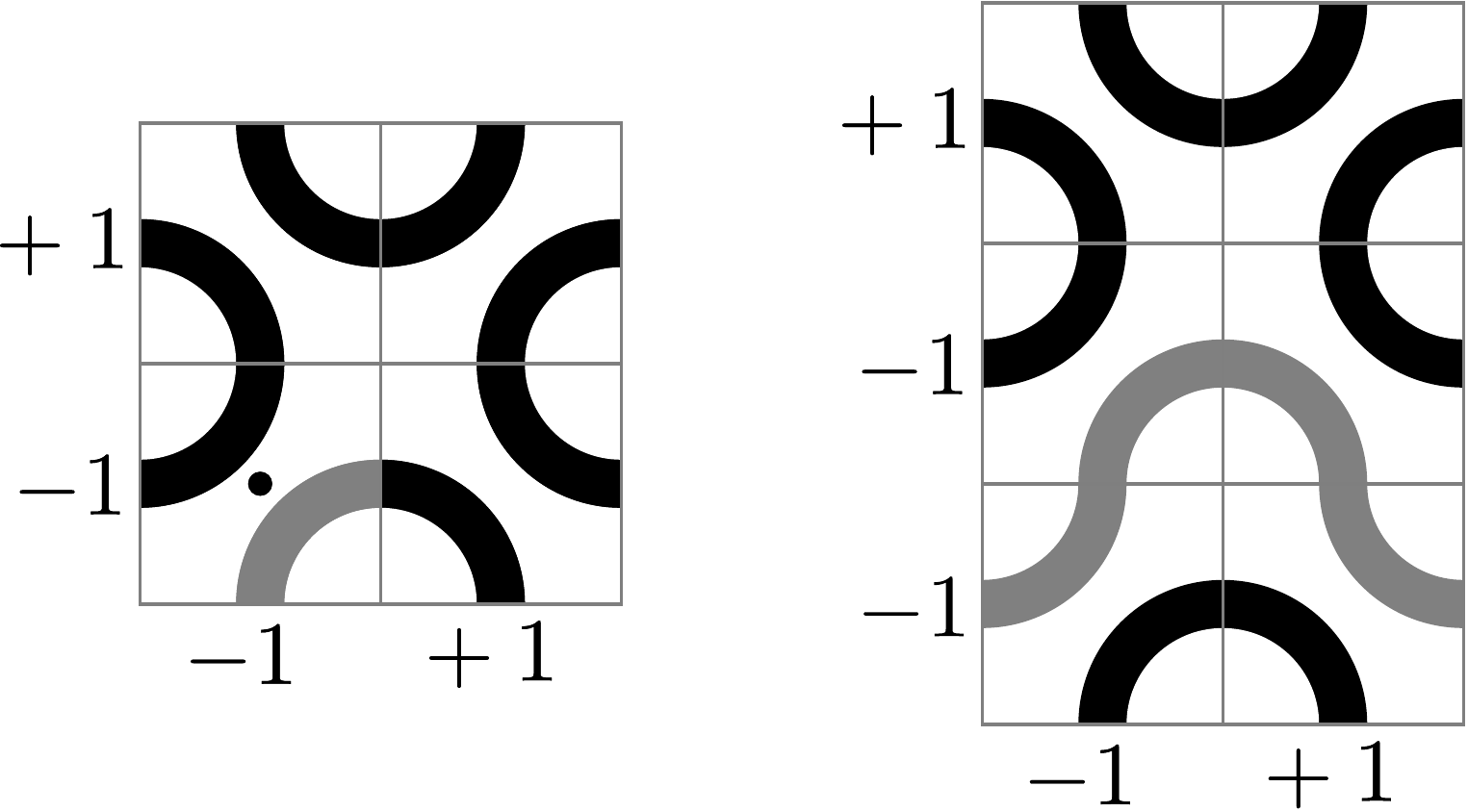}
\end{center}
We will assume that $\alpha$ is the gray arc in the above left picture, and $(m,n)$ is the coordinates of the dot. The other cases will be similar. 
In this case,  $\omega_m=-1$, $\omega_{m+1}=1$ and $\omega'_n=-1$. 
If $\omega'_{n-1}=1$, then $y=m-\frac{1}{2}$ is a $+-$-horizontal dividing line, and by Proposition \ref{prop:loop4} we know $\alpha$ is part of loop visiting four squares.
Therefore, we may assume $\omega'_{n-1}=-1$. In this case, the gray arc extends to a curve of length four crossing the neighborhood of the vertical dividing line as on the right side. 
\end{proof}

The following is equivalent to the remaining three statements of the \hyperref[thm:renormalization]{Renormalization Theorem}. 
\begin{theorem}
Assume $\omega$ and $\omega'$ are unbounded-collapsible.
\begin{enumerate}
\item[(2)] There is no infinite connected union of arcs of tiles contained entirely 
in squares with centers in $\Z^2 \sm \barK$. 
\item[(3)] If $\gamma$ is a curve of the associated tiling, then 
collapsing the $1$-neighborhoods of the $-+$-dividing lines slides the arcs of tiles with centers in $\barK$ together 
so that the images of the arcs of $\gamma$ form a curve in $\tiling{\tau_{c(\omega), c(\omega')}}$, with the natural cyclic or linear ordering
of these arcs respected. (The type of ordering depends if $\gamma$ is closed or bi-infinite.)
\item[(4)] No closed curve of the tiling is consists entirely of arcs of tiles with centers in $\barK$.
\end{enumerate} 
\end{theorem}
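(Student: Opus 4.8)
The plan is to translate all three statements into assertions about how the curves of $\tiling{\tau_{\omega,\omega'}}$ interact with the removed squares (centers in $\Z^2\sm\barK$) and to extract everything from the local \emph{bridge} structure of Lemma \ref{lem:crossing dividing lines}. By Proposition \ref{prop:barK} the removed squares are exactly the $1$-neighborhoods of the $-+$-dividing lines, so a curve meets a removed square precisely when it enters such a neighborhood. Lemma \ref{lem:crossing dividing lines} tells us that every arc in a removed square not belonging to a length-$4$ loop lies in a four-square \emph{bridge} crossing one neighborhood transversally: either a horizontal bridge (matching $\omega'$-values, net horizontal displacement $2$, both endpoints in a kept row) or a vertical bridge (matching $\omega$-values, net vertical displacement $2$, both endpoints in a kept column). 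The whole proof is organized around chaining these bridges.

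For (2), suppose $U$ is an infinite connected union of arcs in removed squares; being connected, $U$ is a sub-arc of a single curve. Choose an arc of it; by the Lemma it lies in, say, a horizontal bridge pinned to a kept row $n$. Following $U$ to the right, whenever a bridge exits a vertical strip it lands at height $n$ in a kept row, so the next square is either kept (and $U$ has ended) or again a vertical-strip square in row $n$, which forces a further horizontal bridge at the same height. Thus $U$ marches monotonically through consecutive removed columns at the fixed row $n$; it can never convert to a vertical bridge because it stays in a kept row. Unbounded-collapsibility of $\omega$ makes every run of consecutive removed columns finite, so after finitely many bridges $U$ reaches a kept column, hence a kept square, and the same holds to the left; therefore $U$ is finite, a contradiction. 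Vertical bridges are symmetric, using unbounded-collapsibility of $\omega'$. (This simultaneously rules out closed curves of length $>4$ lying wholly in removed squares.)

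For (3), statement (2) shows the portions of a curve $\gamma$ in removed squares are finite bridge-chains, each — by the argument above — either a purely horizontal chain pinned to one kept row or a purely vertical chain pinned to one kept column, joining two flanking kept-square arcs. Collapsing the columns contracts every horizontal chain, gluing the kept arc entering its left end to the kept arc leaving its right end along edges that, by the ``endpoints agree'' clause of Lemma \ref{lem:crossing dividing lines}, become identified in $\tiling{\tau_{c(\omega),\omega'}}$; collapsing the rows then does the same to the vertical chains in $\tiling{\tau_{c(\omega),c(\omega')}}$. Since horizontal chains live in kept rows and vertical chains in kept columns, the two collapses act on disjoint data and do not interfere, so the surviving kept arcs of $\gamma$ reassemble, in their original cyclic (if $\gamma$ is closed) or linear (if bi-infinite) order, into one curve of the renormalized tiling. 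The only verification is that each gluing matches edge midpoints, which is precisely the height/abscissa agreement recorded in the Lemma.

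For (4), suppose a closed curve $\gamma$ lies entirely in kept squares, with bounding box $[c,C]\times[\rho,r]$. Being connected, $\gamma$ visits every column of $[c,C]$ and every row of $[\rho,r]$; since all of these are kept, $\omega$ has no $-+$-transition in $[c,C]$ and $\omega'$ none in $[\rho,r]$, so $\omega|_{[c,C]}=+^a-^b$ and $\omega'|_{[\rho,r]}=+^{a'}-^{b'}$ and the tiling inside the box is a four-quadrant pattern. I claim such a pattern admits no curve confined to the box. At the leftmost column an arc cannot use the left edge; the unique arc of each tile avoiding the left edge uses the right edge and one horizontal edge, and following that horizontal edge into the neighbor above or below forces — because $\omega'|$ has only its single $+-$-transition — either a crossing of the box boundary or the use of a left-edge arc, both impossible. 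Hence if $\omega_c=+$ no square of column $c$ can be used by $\gamma$, contradicting that $c$ is visited; and if $\omega_c=-$ (so $\omega|_{[c,C]}$ is constant $-$ and $\omega_C=-$) the mirror argument at column $C$ gives the contradiction. As $+^a-^b$ always has $\omega_c=+$ or $\omega_C=-$, one case always applies. This statement (4) is the main obstacle: unlike (2) and (3) it is not a direct corollary of the bridge lemma but needs the global reduction to a four-quadrant pattern plus the fact that such patterns carry only unbounded curves, and the care lies in closing the extreme-column analysis across the degenerate cases (empty quadrants, constant words).
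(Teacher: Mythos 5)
Your proof is correct and follows essentially the same route as the paper: statements (2) and (3) are obtained by chaining the four-square bridges of Lemma \ref{lem:crossing dividing lines} across consecutive removed strips and invoking unbounded-collapsibility, and statement (4) is reduced to the observation that a closed curve lying in kept squares forces $\omega$ and $\omega'$ to restrict to words of the form $+^a-^b$, giving a four-quadrant pattern. The only difference is that for (4) the paper stops at that reduction and leaves the verification to the reader (with a figure), whereas you carry out the extreme-column analysis explicitly; that analysis is sound, so your write-up simply supplies a detail the paper omits.
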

\begin{proof}
Consider statement (2). Suppose we had such a union of arcs. Choose one of the arcs, $\alpha$. By the Lemma, it is part of a union of four arcs $\gamma$
which crosses a $-+$-dividing line. Without loss of generality, assume that this dividing line is vertical. The endpoints of $\gamma$ differ by a horizontal translation by $2$
and lie outside of any $1-$-neighborhood of a $-+$-horizontal dividing lines. Without loss of generality, assume that the infinite union of arcs continues to the right. We observe that because
the endpoints were not in a $1-$-neighborhood of a $-+$-horizontal dividing lines, the next square to the right of the right endpoint of $\gamma$ cannot lie within the $1-$-neighborhood.
By assumption, this square must lie in the  $1-$-neighborhood of a $-+$-vertical dividing line. The curve traverses this neighborhood after visiting another four squares. We see by induction
that the infinite union of arcs crosses an contiguous infinite sequence of $1-$-neighborhoods of $-+$-vertical dividing lines. Therefore $\omega$ terminates in the infinite word $(-+)^\infty$,
which shows that $\omega$ is not unbounded-collapsible.

Statement (3) also follows from the lemma. The collections of four arcs crossing $1$-neighborhoods of $-+$-dividing lines
are collapsed.

Statement (4) follows from understanding the maximal contiguous collections of tiles with centers in $\barK$.
Such a collection arises from choosing integers $a \leq b$ and $c\leq d$ such that 
$\omega_a \ldots \omega_b$ and $\omega'_c \ldots \omega'_d$ consist of words in the set $\set{(+)^i(-)^j}{$i\geq 0$ and $j \geq 0$}$.
We leave it to the reader to check no such region contains closed loops. See the example below.
\begin{center}
\includegraphics[height=1.5in]{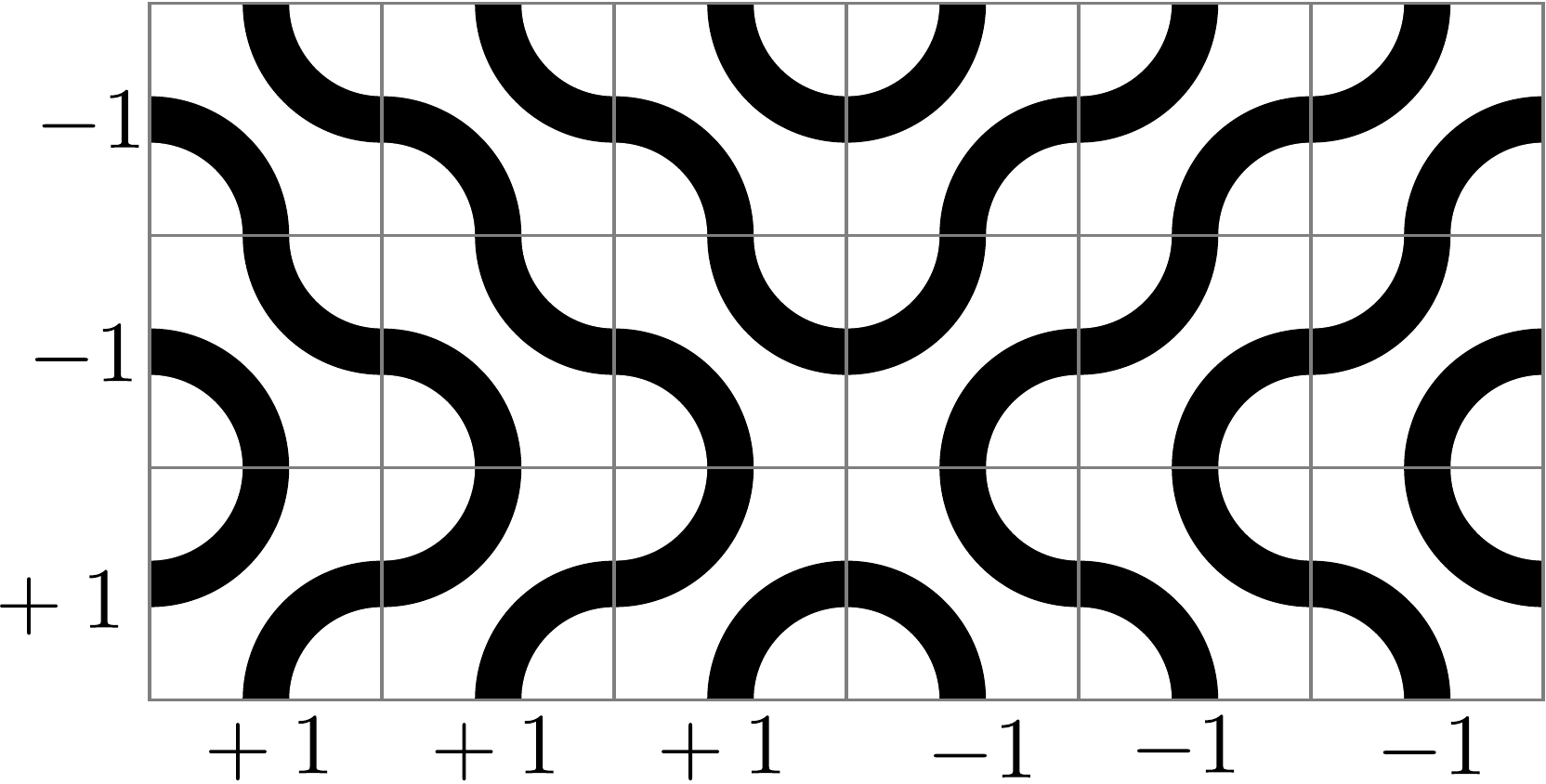}
\end{center}
\end{proof}

The following implies the \hyperref[lem:returns]{Return times lemma}.

\begin{lemma}[Restated Return times Lemma]
\name{lem:returns3}
Suppose that $\omega$ and $\omega'$ are unbounded-collapsible.
Suppose $(m,n) \in \barK$ and $\v'\in N=\{(1,0), (-1,0), (0,1), (0,-1)\}$ is a vector.
By the unbounded-collapsible property, we can define $k \in \Z$ as follows:
\begin{enumerate}
\item If $\v'=(1,0)$, $k$ is the smallest non-negative integer for which 
$\omega_{n+1} \ldots \omega_{n+2k}=(-+)^k$.
\item If $\v'=(-1,0)$, $k$ is the smallest non-negative integer for which 
$\omega_{n-2k} \ldots \omega_{n-1}=(-+)^k$.
\item If $\v'=(0,1)$, $k$ is the smallest non-negative integer for which 
$\omega'_{n+1} \ldots \omega'_{n+2k}=(-+)^k$.
\item If $\v'=(-1,0)$, $k$ is the smallest non-negative integer for which 
$\omega'_{n-2k} \ldots \omega'_{n-1}=(-+)^k$.
\end{enumerate}
Then, the curve of the tiling leaving the tile centered at $(m,n)$ and entering the 
tile centered at $(m,n)+\v'$ visits $4k$ tiles in $\Z^2 \sm \barK$ (starting at $(m,n)+\v'$) before returning to 
$\barK$. 
\end{lemma}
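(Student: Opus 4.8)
The plan is to follow the curve square by square, recognizing each excursion into $\Z^2 \smallsetminus \barK$ as a crossing of the $1$-neighborhood of a $-+$-vertical dividing line, using Lemma \ref{lem:crossing dividing lines} as the engine. By the symmetries of the construction (reflection in the $x$-axis, in the $y$-axis, and in the line $y=x$), it suffices to treat $\v'=(1,0)$; the other three cases follow formally after swapping the roles of $\omega,\omega'$ and of $m,n$. Here $(m,n)\in\barK$ means $m\in K(\omega)$ and $n\in K(\omega')$, and I read $k$ as the number of $-+$ pairs in the maximal run immediately to the right of $m$; that is, $k$ is the largest integer with $\omega_{m+1}\cdots\omega_{m+2k}=(-+)^k$, which is finite because $\omega$ is unbounded-collapsible. (The indices in statement (1) should read $\omega_{m+\bullet}$, not $\omega_{n+\bullet}$.) Equivalently $k$ is pinned down by $\omega_{m+1}\cdots\omega_{m+2k}=(-+)^k$ together with $m+2k+1\in K(\omega)$. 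If $k=0$, then $(\omega_m,\omega_{m+1})\neq(-,+)$ (since $m\in K(\omega)$) and $(\omega_{m+1},\omega_{m+2})\neq(-,+)$ (by maximality), so $m+1\in K(\omega)$ and the curve re-enters $\barK$ at $(m+1,n)$ immediately, having visited $0=4k$ squares of $\Z^2\smallsetminus\barK$.

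For $k\geq 1$ I would argue by induction on $i=0,1,\ldots,k-1$ the invariant that after the $i$-th crossing the curve meets the vertical edge at $(m+2i+\tfrac12,\,n)$ travelling rightward, with $\omega_{m+2i+1}=-1$ and $\omega_{m+2i+2}=+1$. The base case $i=0$ is the hypothesis that the curve leaves $(m,n)$ rightward through the edge at $(m+\tfrac12,n)$, together with $\omega_{m+1}\omega_{m+2}=-+$ (as $k\geq1$). Given the invariant, column $m+2i+1$ is adjacent to the $-+$-vertical dividing line $x=m+2i+\tfrac32$, so $m+2i+1\notin K(\omega)$ and $(m+2i+1,n)\in\Z^2\smallsetminus\barK$ by Proposition \ref{prop:barK}. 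Applying Lemma \ref{lem:crossing dividing lines} to the arc $\alpha$ the curve follows into this square yields a path $\gamma$ through four squares crossing the $1$-neighborhood of this vertical dividing line, whose endpoints differ by horizontal translation by $2$ and avoid the neighborhood of any $-+$-horizontal dividing line. Since the entry endpoint is $(m+2i+\tfrac12,n)$, the exit endpoint is $(m+2i+\tfrac52,n)$, so the curve leaves into column $m+2i+3$ at height $n$, giving the invariant for $i+1$; the four squares of $\gamma$ lie in columns $m+2i+1,m+2i+2$, both removed, hence all four are in $\Z^2\smallsetminus\barK$.

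The crux is checking the hypothesis of Lemma \ref{lem:crossing dividing lines}, that the followed arc $\alpha$ never lies on a loop visiting only four squares. The curve enters $(m+2i+1,n)$ through its left edge, so $\alpha$ meets that left edge; by Proposition \ref{prop:loop4} any four-square loop sits around a vertex where a $-+$-dividing line meets a $+-$-dividing line of the opposite orientation, and in each of its four squares its arc joins the two edges meeting at that vertex. Hence any four-square loop containing $\alpha$ must be centered at a vertex on the \emph{left} edge of $(m+2i+1,n)$, namely at $(m+2i+\tfrac12,\,n\pm\tfrac12)$, which forces $y=n\pm\tfrac12$ to be a $-+$-horizontal dividing line (the vertical line $x=m+2i+\tfrac12$ is of type $+-$ or absent, never $-+$, because $\omega_{m+2i+1}=-1$). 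But a $-+$-horizontal dividing line adjacent to row $n$ means $n\notin K(\omega')$, contradicting $(m,n)\in\barK$. So $\alpha$ lies on no four-square loop and the lemma applies at every step. This loop-exclusion, together with the bookkeeping that the height stays equal to $n\in K(\omega')$ throughout, is the main thing to get right.

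Finally I would close the induction at $i=k$. By maximality of $k$ we have $\omega_{m+2k+1}\omega_{m+2k+2}\neq-+$, while $\omega_{m+2k}=+1$ gives $(\omega_{m+2k},\omega_{m+2k+1})\neq(-,+)$; thus $m+2k+1\in K(\omega)$, and with $n\in K(\omega')$ this shows the curve re-enters $\barK$ exactly at $(m+2k+1,n)$. The $k$ crossings use the disjoint column pairs $\{m+2i+1,m+2i+2\}$ for $i=0,\ldots,k-1$, each contributing four distinct squares of $\Z^2\smallsetminus\barK$, so the curve visits exactly $4k$ such squares before the return, as claimed. The three remaining directions are identical after the appropriate reflection, with $-+$-horizontal dividing lines and the sequence $\omega'$ (indexed by $n$) playing the roles of the $-+$-vertical lines and $\omega$, and with $m\in K(\omega)$ replacing $n\in K(\omega')$ in the loop-exclusion step.
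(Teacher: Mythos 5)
Your proof is correct and takes essentially the same route as the paper's: both reduce to the case $\v'=(1,0)$ by symmetry, dispose of $k=0$ directly from the definition of $\barK$, and then induct along the curve, using Lemma \ref{lem:crossing dividing lines} to traverse one $-+$-vertical dividing-line neighborhood (four squares of $\Z^2 \sm \barK$) per step, terminating via maximality of $k$ and unbounded collapsibility. Your explicit exclusion of four-square loops (via Proposition \ref{prop:loop4} and $n \in K(\omega')$), your check that the crossing is of a vertical rather than horizontal neighborhood, and your flagging of the index typo ($\omega_{n+\bullet}$ should be $\omega_{m+\bullet}$) are verifications of hypotheses the paper leaves implicit, not a different argument.
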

\begin{proof}
Assume we are in case (1), $\v'=(1,0)$. If $(m+1,n) \in \barK$, then by definition of $\barK$ we have 
$\omega_{m+1} \omega_{m+2} \neq -+$. So, $k=0$, and this agrees to the number of squares encountered before
returning to $\barK$: zero. Now suppose $(m+1,n) \not \in \barK$. Since $(m,n) \in \barK$, it must be that
$\omega_{m+1} \omega_{m+2} = -+$, so $k$ is at least one. We apply the Crossing Dividing Lines Lemma to
see that after visiting four squares in $\Z^2 \sm \barK$, the curve enters the tile centered at 
$(m+3,n)$. If this square is in $\barK$ we must have $\omega_{m+1} \ldots \omega_{m+4} \neq (-+)^2$,
and we have proved the Lemma with $k=1$. Otherwise, if $(m+3,n) \not \in \barK$ we have $\omega_{m+1} \ldots \omega_{m+4} = (-+)^2$. We may repeat this argument inductively. Since $\omega$ is unbounded-collapsible, eventually 
$\omega_{n+1} \ldots \omega_{n+2k} \neq (-+)^k$.
\end{proof}

Finally, we consider the \hyperref[lem:returns2]{Collapsed Steps Lemma}. For this we make a definition to parallel the definitions of ``steps.'' Indeed the following definition seems more natural. A {\em domino} is an unordered pair
$D=\{(m, n), (m', n')\}$ with $m, m', n, n' \in \Z$ and $(m,n)-(m',n') \in N$ (i.e. the squares
centered at these points are adjacent). We say $D$ is {\em horizontal} if 
$(m,n)-(m',n') \in \{(1,0), (-1,0)\}$ and {\em vertical} otherwise. 

A horizontal (resp. vertical) domino $D=\{(m, n), (m', n')\}$ is {\em in standard position} if $m'=m+1$ (resp. $n'=n$). For this paragraph we assume all dominoes are in standard position. 
Given $\omega$ and $\omega'$ as in this section, we say that 
a horizontal (resp. vertical) domino $D$ is a {\em $\omega_{m} \omega_{m+1}$-horizontal domino}
(resp. {\em $\omega'_{n} \omega'_{n+1}$-vertical domino}).
As with the definition of steps, the quantities $\omega_{m} \omega_{m+1}$ and $\omega'_{n} \omega'_{n+1}$ are viewed as words of length $2$. If $D$ is a $++$- or $--$-horizontal domino, we call it a {\em matching horizontal domino}. We make the analogous definition of {\em matching vertical domino}.

Suppose that $\gamma$ is a finite connected union of arcs of tiles. 
Then $\gamma$ visits a sequence of tiles with centers listed 
$(m_1, n_1), \ldots, (m_k, n_k)$, written in the order the tiles are visited.
Then each pair $\{(m_i,n_i), (m_{i+1}, n_{i+1})\}$ is a domino. We write $\sD(\gamma)$ for the collection
of such dominoes. (There are $k-1$ such dominoes.)
The {\em domino vector of the path $\gamma$}
is the element $\bd=(\bd_1, \ldots, \bd_6) \in \Z^6$, where 
$\bd_1$ is the number of $-+$-horizontal dominoes,
$\bd_2$ is the number of $+-$-horizontal dominoes,
$\bd_3$ is the number of matching horizontal dominoes,
$\bd_4$ is the number of $-+$-vertical dominoes,
$\bd_5$ is the number of $+-$-vertical dominoes, and
$\bd_6$ is the number of matching vertical dominoes.

We will now state a lemma equivalent to the \hyperref[lem:returns2]{Collapsed Steps Lemma}.
Suppose $\gamma$ is a finite connected union of at least two arcs of tiles
which begins and ends at a tile whose center lies in $\barK$, but for which no other 
visited tile has a center lying in $\barK$. By Lemma \ref{lem:returns3},
$\gamma$ visits a total of $4k+2$ squares for some integer $k \geq 0$. 
The renormalization removes all the squares of $\gamma$ except for the first and the last tile, and slides
these tiles together forming a domino. We call this the {\em domino of $\gamma$} and denote it $D_\gamma$. (This is well defined. See Remark \ref{rem:collapsibility and proofs}.)

\begin{lemma}[Collapsing Dominos Lemma]
\name{lem:returns4}
Suppose that $\omega$ and $\omega'$ are unbounded-collapsible.
Let $\gamma$ and $k$ be as in the previous paragraph. Then the domino vector $\bd$ of
$\gamma$ is determined by $k$ and the domino of $\gamma$, $D_\gamma$. 
\begin{enumerate}
\item If $D_\gamma$ is a $-+$-horizontal domino then $\bd=(k,k-1,2,0,0,2k)$.
\item If $D_\gamma$ is a $+-$-horizontal domino then $\bd=(k,k+1,0,0,0,2k)$.
\item If $D_\gamma$ is a matching horizontal domino then $\bd=(k,k,1,0,0,2k)$.
\item If $D_\gamma$ is a $-+$-vertical domino then $\bd=(0,0,2k, k,k-1,2)$.
\item If $D_\gamma$ is a $+-$-vertical domino then $\bd=(0,0,2k, k,k+1,2)$.
\item If $D_\gamma$ is a matching vertical domino then $\bd=(0,0,2k, k,k,1)$.
\end{enumerate}
\end{lemma}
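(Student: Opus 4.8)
The plan is to reduce the statement to a direct bookkeeping of the dominoes of $\gamma$, built on top of the \hyperref[lem:crossing dividing lines]{Crossing Dividing Lines Lemma} and \hyperref[lem:returns3]{Lemma \ref{lem:returns3}}. I would treat first the case that $D_\gamma$ is horizontal; this is exactly the case in which $\gamma$ leaves its initial tile in a horizontal direction $\v'=(\pm 1,0)$. Say the initial tile is centered at $(m,n)\in\barK$ and $\v'=(1,0)$ (the case $\v'=(-1,0)$ is handled identically, reading indices to the left). By \hyperref[lem:returns3]{Lemma \ref{lem:returns3}}, we have $\omega_{m+1}\ldots\omega_{m+2k}=(-+)^k$, the columns $m+1,\ldots,m+2k$ all lie in $\Z^2\sm\barK$, and $m+2k+1$ is the next element of $K(\omega)$ after $m$. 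Thus $\gamma$ visits the $4k+2$ tiles $t_0=(m,n)$, then $4k$ tiles partitioned into $k$ consecutive ``crossing blocks'' occupying the column-pairs $(m+1,m+2),(m+3,m+4),\ldots,(m+2k-1,m+2k)$, and finally the terminal tile $t_{4k+1}=(m+2k+1,n)\in\barK$. This produces $4k+1$ dominoes, which I would sort into the $3k$ dominoes internal to the blocks and the $k+1$ dominoes joining consecutive blocks together with the two endpoints.

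Next I would analyze a single crossing block. By the \hyperref[lem:crossing dividing lines]{Crossing Dividing Lines Lemma}, such a block crosses a single $-+$-vertical dividing line, occupies a $2\times 2$ square spanning two columns (with $\omega$-values $-$ and $+$) and two rows, say $n$ and $n'$, whose $\omega'$-values agree, i.e. $\omega'_n=\omega'_{n'}$. Entering from the left edge of the left column and exiting the right edge of the right column, the Truchet arcs force the interior path to step vertically, then horizontally across the dividing line, then vertically again. Hence each block contributes exactly one $-+$-horizontal domino (the middle step, whose word is the $-+$ of $\omega$) and two matching vertical dominoes (the two vertical steps, matching because $\omega'_n=\omega'_{n'}$). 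Summing over the $k$ blocks contributes $k$ to $\bd_1$ and $2k$ to $\bd_6$, with nothing toward $\bd_4,\bd_5$.

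Then I would count the $k+1$ connecting dominoes, all of which are horizontal. Using the pattern $\omega_{m+1}\ldots\omega_{m+2k}=(-+)^k$, the $k-1$ dominoes joining one block to the next have word $\omega_{m+2j}\omega_{m+2j+1}=(+)(-)$, so each is a $+-$-horizontal domino. The two remaining connecting dominoes are $\{t_0,t_1\}$ and $\{t_{4k},t_{4k+1}\}$, with words $\omega_m\omega_{m+1}=\omega_m(-)$ and $\omega_{m+2k}\omega_{m+2k+1}=(+)\omega_{m+2k+1}$. Since collapsing identifies the retained columns $m$ and $m+2k+1$, the domino $D_\gamma$ is precisely the $\omega_m\omega_{m+2k+1}$-horizontal domino; reading off the four possibilities for $(\omega_m,\omega_{m+2k+1})$ then pins down these two boundary words (for instance $D_\gamma=-+$ forces boundary words $--$ and $++$, both matching). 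Assembling the three groups yields the claimed $\bd$ in each case. Note in particular that $D_\gamma=-+$ forces $k\geq 1$, since otherwise $m,m+1$ would both lie in $\Z^2\sm\barK$, so $\bd_2=k-1\geq 0$ as required.

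Finally, the vertical case ($D_\gamma$ vertical, $\v'=(0,\pm 1)$) is obtained verbatim by exchanging the roles of $\omega$ and $\omega'$ and of rows and columns, i.e. by reflecting the whole picture in the line $y=x$; this interchanges horizontal and vertical dominoes and reproduces cases (4)--(6). I expect the only delicate point to be the single-block analysis of the second paragraph: one must verify from the arc geometry (equivalently, from the figure accompanying the \hyperref[lem:crossing dividing lines]{Crossing Dividing Lines Lemma}) that the interior path is genuinely forced to be vertical--horizontal--vertical, so that every block contributes the same domino triple regardless of which adjacent row $n'$ is used and regardless of the traversal direction. Everything after that is the elementary index bookkeeping sketched above.
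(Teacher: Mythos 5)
Your proposal is correct and follows essentially the same route as the paper's proof: both reduce everything to the \hyperref[lem:crossing dividing lines]{Crossing Dividing Lines Lemma}, decompose the horizontal case into $k$ crossing blocks (each contributing one $-+$-horizontal domino and two matching vertical dominoes), $k-1$ interior $+-$-horizontal connectors, and two boundary dominoes determined by $D_\gamma$, and then obtain the vertical case by the reflection symmetry in $y=x$. The only differences are cosmetic: the paper disposes of the degenerate case $k=0$ explicitly up front and reads the block contribution off a figure ``by inspection,'' whereas you derive the vertical--horizontal--vertical structure of each block directly and handle $k=0$ via the remark that a $-+$-horizontal $D_\gamma$ forces $k\geq 1$.
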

\begin{proof}
This is again implied by Lemma \ref{lem:crossing dividing lines}. We consider the horizontal case. The vertical case follows by symmetry.  
In the case where $k=0$, we have that the collection of dominoes along the curve is $\sD(\gamma)=\{D_\gamma\}$. 
When $D_\gamma$ is a $-+$-horizontal domino it can not satisfy the conditions of the lemma because the whole domino lies in $\Z^2 \sm \barK$. 
This proves the case $k=0$.

Now suppose $k>0$. Orient $\gamma$ from left to right. We assume $\gamma$ starts at a tile centered at $(m,n)$. By Lemma \ref{lem:crossing dividing lines} it ends at a tile centered
at $(m+2k+1,n)$. This makes $D_\gamma$ a $\omega_m \omega_{m+2k+1}$-horizontal domino. We observe that the first domino in $\sD(\gamma)$
is a $\omega_m \omega_{m+1}=\omega_m -$-horizontal domino. Similarly, the last domino is a $+\omega_{m+2k+1}$-horizontal domino. 
The remaining collection of dominoes consists of $k$ $-+$-horizontal dominoes, $k-1$ $+-$-horizontal dominoes, and $2k$-matching vertical dominoes.
See the image below for the case when $\omega'$ takes the value $+1$ on path $\gamma$. By inspection, we observe that the first three possible conclusions 
of the lemma hold. 
\begin{center}
\includegraphics[height=1in]{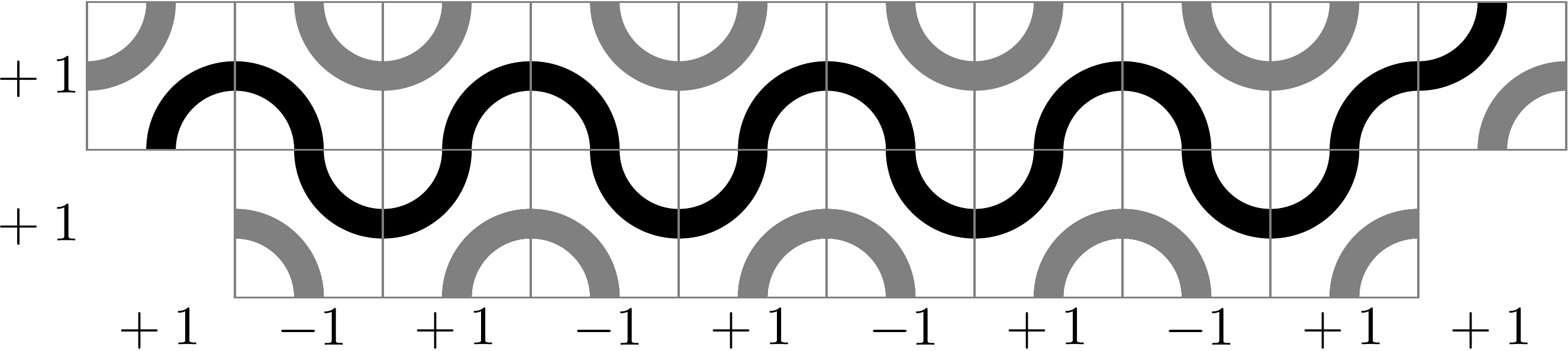}
\end{center}
\end{proof}

\section{Random tilings}
\name{sect:random}
In this section we prove theorem \ref{thm:random tilings} involving tilings generated by random elements of the shift space $\Omega_\pm$. 

For $0 \leq p \leq 1$, we will define a Borel probability measure $\mu_p$ on $\Omega_\pm$. The measure $\mu_p$ is the shift-invariant measure such that 
\begin{enumerate}
\item $\mu_p(\set{\omega \in \Omega_\pm}{$\omega_0=1$})=\frac{1}{2}$, 
\item the conditional expectation that $\omega_0=\omega_1$ given the values of $\omega_n$ for $n \leq 0$ is $p$, and
\item the conditional expectation that $\omega_0=\omega_1$ given the values of $\omega_n$ for $n \geq 1$ is $p$. 
\end{enumerate}
Statement (2) is equivalent to the statement that for any Borel set $A \subset \Omega_\pm$,
with the property that for any $\omega \in A$ and any function $f:\{1, 2, \ldots\} \to \{\pm 1\}$ the element $\eta^f \in A$ where
$\eta^f_n=\omega_n$ if $n \leq 0$ and $\eta^f_n=f(n)$ whenever $n>0$, we have 
\begin{equation}
\name{eq:conditional expectation}
\mu_p(\{\omega \in A~:~\omega_0=\omega_1\})=p \mu_p(A).
\end{equation}

This definition indicates that a random element of $\Omega_\pm$ taken with respect to $\mu_p$ can be constructed as described above Theorem \ref{thm:random tilings} in the introduction. Recall the definition of the cylinder set $\sC(f)$ in Section \ref{sect:topology}. We have the following. 

\begin{proposition}
\name{prop:measure uniquely defined}
Given any $p$ with $0 \leq p \leq 1$, there is a unique shift-invariant Borel probability measure $\mu_p$ satisfying statements (1)-(3) above. Let $f:\{m,\ldots, n\} \to \Z^2 \to \{\pm 1\}$ be arbitrary, and $\sC(f)$ be the associated cylinder set. Then,
\begin{equation}
\name{eq:measures of cylinder sets}
\mu_p\big(\sC(f)\big)=\frac{1}{2}p^k (1-p)^{m-n-k}
\end{equation}
where $k$ is the number of integers $i$ with $m \leq i <n$ and $f(i)=f(i+1)$. 
\end{proposition}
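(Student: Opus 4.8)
The plan is to separate the statement into existence and uniqueness, with the single formula \eqref{eq:measures of cylinder sets} doing the work for both. (The exponent of $(1-p)$ there should be read as $n-m-k$, the number of indices $i$ with $m\le i<n$ and $f(i)\neq f(i+1)$.) The measure $\mu_p$ is the law of the stationary Markov chain on $\{\pm 1\}$ whose transition rule keeps the current symbol with probability $p$ and flips it with probability $1-p$, started from the uniform distribution $(\tfrac12,\tfrac12)$; this distribution is stationary for that symmetric transition matrix, and the chain is reversible, which is exactly what makes conditions (2) and (3) simultaneously compatible.

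For uniqueness I would first upgrade \eqref{eq:conditional expectation} to an arbitrary junction: for every $j\in\Z$ and every Borel set $A$ measurable with respect to the coordinates $\{i:i\le j\}$,
\begin{equation*}
\mu_p\big(\{\omega\in A:\omega_{j+1}=\omega_j\}\big)=p\,\mu_p(A).
\end{equation*}
I would prove this by conjugating with the shift: applying $\sigma^j$ carries $A$ to a set $B=\sigma^j(A)$ depending only on coordinates $\le 0$ and carries $\{\omega_j=\omega_{j+1}\}$ to $\{\eta_0=\eta_1\}$, so \eqref{eq:conditional expectation} applies to $B$, and shift-invariance of $\mu_p$ returns the claim. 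With this in hand I would compute $\mu_p(\sC(f))$ by building the cylinder one coordinate at a time from left to right: set $A_m=\{\omega_m=f(m)\}$, which has measure $\tfrac12$ by shift-invariance and condition (1), and let $A_{j+1}=A_j\cap\{\omega_{j+1}=f(j+1)\}$. Since $A_j$ depends only on coordinates $\le j$, the claim (with factor $p$ or $1-p$ according as $f(j+1)=f(j)$ or not) gives the one-step recursion, and the product telescopes to \eqref{eq:measures of cylinder sets}. As cylinder sets form a $\pi$-system generating the Borel $\sigma$-algebra, a measure is determined by its values on them, so at most one $\mu_p$ can satisfy (1)--(3). Note this step uses only (1), (2), and shift-invariance.

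For existence I would define a set function on cylinders by the right-hand side of \eqref{eq:measures of cylinder sets} and check Kolmogorov consistency: summing $\mu_p(\sC(f))$ over the two values of $f$ at a newly appended endpoint (either $n+1$ or $m-1$) contributes a factor $p+(1-p)=1$ and recovers the measure of the shorter cylinder. The Kolmogorov extension theorem (equivalently Carath\'eodory/Hahn--Kolmogorov) then produces a Borel probability measure $\mu_p$ with these finite-dimensional marginals. Shift-invariance is immediate, since $\sigma^{-1}(\sC(f))$ is again a cylinder with the same length $n-m$ and the same number $k$ of agreeing adjacent pairs. Condition (1) is the length-one case $\mu_p(\sC(\widehat{+}))=\tfrac12$. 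For (2) I would verify \eqref{eq:conditional expectation} first for cylinders $A$ supported on coordinates $\le 0$---there $\{\omega\in A:\omega_0=\omega_1\}$ is a cylinder with one extra agreeing pair, so the formula yields exactly $p\,\mu_p(A)$---and then extend to the full $\sigma$-algebra of coordinates $\le 0$ by a monotone-class argument; condition (3) is handled identically on coordinates $\ge 1$, where the symmetry of the weights under reversing the order of $f$ makes the right-to-left computation agree with the left-to-right one.

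I expect the only genuinely delicate point to be the verification of conditions (2) and (3) for the constructed measure. Checking them on cylinder sets is a one-line application of the formula, but promoting \eqref{eq:conditional expectation} to all sets measurable with respect to a half-line of coordinates requires a monotone-class (or Dynkin) step, and the simultaneous validity of (2) and (3) rests on the reversibility built into the symmetric weights $\{p,1-p\}$ together with the uniform stationary start. Everything else---the consistency sum, shift-invariance, and the telescoping product---is routine bookkeeping.
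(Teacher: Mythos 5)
Your proof is correct and follows the same route the paper itself sketches: the cylinder-set values are forced by (1)--(3) together with shift-invariance, and an extension theorem (Carath\'eodory in the paper, Kolmogorov in your write-up --- the same tool here) yields existence and uniqueness. The paper explicitly leaves every detail to the reader, and your elaboration --- the shift-conjugation upgrade of \eqref{eq:conditional expectation}, the telescoping product, the consistency check, and the monotone-class verification of (2)--(3) for the constructed measure --- supplies exactly those omitted steps, along with the correct reading $n-m-k$ (rather than $m-n-k$) of the exponent in \eqref{eq:measures of cylinder sets}.
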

Note that by Carath\'eodory's extension theorem, the measure $\mu_p$ is uniquely determined by the measures of cylinder sets. We leave it to the reader to check
that $\mu_p\big(\sC(f)\big)$ must be as described, and that this function satisfies the conditions of  Carath\'eodory's theorem.

\subsection{The collapsing map}
Recall the definition of the collapsing map given in Section \ref{sect:collapsing}. In particular, recall Proposition \ref{prop:collapsing measures}: $\mu \circ c^{-1}$ is a shift-invariant measure whenever $\mu$ is. In fact the collection of measures $\set{\mu_p}{$0 < p \leq 1$}$ is invariant under this operation up to scaling. 

\begin{theorem}
\name{thm:img}
For $0 \leq p \leq 1$, let $q=\frac{1}{2-p}$. Then, $\mu_p \circ c^{-1}=p \mu_q.$
\end{theorem}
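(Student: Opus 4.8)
The plan is to verify the identity on cylinder sets and then invoke Carath\'eodory's theorem. Both $\mu_p \circ c^{-1}$ and $p\mu_q$ are finite Borel measures, and since the cylinder sets form a $\pi$-system generating the Borel $\sigma$-algebra, it suffices to show they agree on every cylinder $\sC(w_a \ldots \widehat{w_0}\ldots w_b)$ together with matching total masses. First I would record that $\mu_p(C)=p$: zero-collapsibility fails exactly on the two disjoint events $\{\omega_0=-1,\omega_1=1\}$ and $\{\omega_{-1}=-1,\omega_0=1\}$, each of measure $\frac{1-p}{2}$ by Proposition \ref{prop:measure uniquely defined}, so $\mu_p(C)=1-(1-p)=p$; since $C_u$ has full measure when $p>0$ (Propositions \ref{prop:omegaalt} and \ref{prop:omegaalt2}, as $\mu_p(\{\omegaalt\})=0$), this gives the correct total mass. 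The endpoints $p\in\{0,1\}$ I would dispatch directly: $\mu_0$ is carried by the two non-collapsible alternating sequences, so $\mu_0\circ c^{-1}=0=0\cdot\mu_{1/2}$, while $\mu_1$ is carried by the two constant sequences, which $c$ fixes, so $\mu_1\circ c^{-1}=\mu_1=1\cdot\mu_1$.

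For $0<p<1$, Corollary \ref{cor:preimages of cylinders} expresses $c^{-1}\big(\sC(w_a\ldots\widehat{w_0}\ldots w_b)\big)$ as the intersection with $C_u$ of a union of cylinders $\sC(W)$, where $W=s_- w_a(-+)^{n_a}w_{a+1}\cdots(-+)^{n_{b-1}}w_b s_+$ ranges over admissible insertion exponents $n_a,\ldots,n_{b-1}$. Since each $\omega\in C_u$ has a unique factorization into kept symbols and inserted $(-+)$ blocks, these cylinders are pairwise disjoint, and because $\mu_p(C_u)=1$ I would get $\mu_p\big(c^{-1}(\sC)\big)=\sum_{n_a,\ldots,n_{b-1}}\mu_p(\sC(W))$. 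By Proposition \ref{prop:measure uniquely defined}, $\mu_p(\sC(W))=\frac12\, p^{\#\mathrm{agr}}(1-p)^{\#\mathrm{dis}}$ counted over adjacent pairs of $W$. The key observation is that these counts split across the left boundary $s_-$, the right boundary $s_+$, and the $b-a$ segments $w_k(-+)^{n_k}w_{k+1}$, and that the boundary factors do not depend on the $n_k$; hence the full sum factorizes as $\frac12\,L\,R\,\prod_{k=a}^{b-1}\big(\sum_{n_k}\mathrm{seg}_k\big)$, where $L=p$ if $w_a=+1$ and $L=1$ otherwise, and $R=p$ if $w_b=-1$ and $R=1$ otherwise.

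Each segment sum is a geometric series, which I would evaluate from the single identity $\frac{p}{1-(1-p)^2}=\frac{1}{2-p}=q$. A direct count of agreements and disagreements inside $w_k(-+)^{n_k}w_{k+1}$ yields $\sum_{n_k\geq0}\mathrm{seg}_k=q$ when $w_k=w_{k+1}$ (an agreement), $\sum_{n_k\geq1}\mathrm{seg}_k=p(1-q)$ when $(w_k,w_{k+1})=(-1,1)$, and $\sum_{n_k\geq0}\mathrm{seg}_k=\frac{1-q}{p}$ when $(w_k,w_{k+1})=(1,-1)$; here the asymmetric constraint $n_k\geq1$ in the $-+$ case is exactly the one supplied by the Corollary. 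Thus the product contributes $q^A$ from the $A$ agreements, while the $D$ disagreements alternate in type $-+,+-,-+,\ldots$ as the sign is read from $w_a$ to $w_b$. The crucial cancellation is that each adjacent pair of opposite-type disagreements contributes $p(1-q)\cdot\frac{1-q}{p}=(1-q)^2$, so the stray powers of $p$ cancel in pairs.

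It remains to match the leftover factor against $LR$. When $w_a=w_b$ the count $D$ is even, the disagreement product is exactly $(1-q)^D$, and $LR=p$ in both sub-cases; when $w_a\neq w_b$ the count $D$ is odd, a single unpaired disagreement survives, and its type, hence its residual power of $p$, is precisely compensated by $LR\in\{1,p^2\}$. In all four cases the total collapses to $\frac{p}{2}\,q^A(1-q)^D=p\,\mu_q(\sC)$, which finishes the cylinder identity and thus the theorem. The main obstacle is exactly this final bookkeeping: one must track how the alternating disagreement types pair off and verify that the sealing words $s_\pm$, present only to terminate the finite window, carry precisely the powers of $p$ needed to repair the odd-parity leftover. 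Everything else reduces to the routine geometric-series evaluation resting on $p/\big(1-(1-p)^2\big)=q$.
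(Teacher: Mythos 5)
Your proposal is correct, but it takes a genuinely different route from the paper's. The paper never computes the measure of a general cylinder preimage: it normalizes $\nu=\frac{1}{p}\,\mu_p\circ c^{-1}$ and verifies the three defining properties of $\mu_q$ from Proposition \ref{prop:measure uniquely defined} --- the balance condition $\nu\big(\sC(\widehat{+})\big)=\frac{1}{2}$ and the two one-sided Markov conditions --- by decomposing the relevant preimages into the sets $S_n^+=\sC(+\widehat{+}(-+)^n+)$ and $S_n^-=\sC(\widehat{-}(-+)^n--)$ and summing the single geometric series $\sum_{n\geq 0}(1-p)^{2n}=\frac{1}{p(2-p)}$; shift-invariance of $\mu_p\circ c^{-1}$ (Proposition \ref{prop:collapsing measures}) is what licenses appealing to that characterization. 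You instead match all finite-dimensional distributions directly: the same Corollary \ref{cor:preimages of cylinders} and the same identity $p/\big(1-(1-p)^2\big)=q$ drive your computation, but you must additionally carry the factorization into segment sums, the alternation of $-+$ and $+-$ disagreement types, and the sealing words $s_\pm$. I checked this bookkeeping and it is right: the segment sums are $q$, $p(1-q)$, and $\frac{1-q}{p}$ as you claim, opposite-type pairs contribute $p(1-q)\cdot\frac{1-q}{p}=(1-q)^2$, and in all four boundary cases $(w_a,w_b)\in\{(+,+),(-,-),(+,-),(-,+)\}$ the product of $LR$ with the residual power of $p$ equals $p$, giving $\frac{p}{2}q^A(1-q)^D=p\,\mu_q(\sC)$. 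The paper's route buys brevity (one conditional-probability computation plus uniqueness); yours buys independence from the conditional-expectation machinery and from shift-invariance of the image measure, and it produces explicit preimage formulas for every cylinder, information the paper only extracts later in Lemma \ref{lem:expectation for insertion length}. Two small points you should shore up: the pairwise disjointness of the cylinders in Corollary \ref{cor:preimages of cylinders}'s union deserves a sentence --- the kept/removed status of each position inside the window of $\sC(W)$ is determined locally by $W$ itself, and the symbols $s_\pm$ are exactly what rules out overlaps in the boundary cases --- and you should remark that the anchored cylinders with $a\leq 0\leq b$ form a $\pi$-system generating the Borel $\sigma$-algebra, since Corollary \ref{cor:preimages of cylinders} is stated only for those, so the Dynkin/Carath\'eodory step applies to them rather than to arbitrary cylinders.
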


Observe that for $p \neq 0$, the measures $\mu_p \circ c^{-n}$ converge as $n \to \infty$ after being rescaled to to probability measures to the measure $\mu_1$. In particular, the renormalization dynamics here are not recurrent.

\begin{proof}
We ignore the cases where $p=0$ and $p=1$. The measure $\mu_0$ is supported on $\{\omegaalt, \sigma(\omegaalt)\}$, and
the measure $\mu_1$ is supported on the two constant sequences. These cases are trivial.

First we observe that $\mu_p \circ c^{-1}(\Omega_\pm)=p$. This will prove that $\mu_p \circ c^{-1}$ is $p$ times a probability measure. 
Equation  \ref{eq:measures of cylinder sets} can be used to demonstrate that the measure of the collection of those $\omega$ which are not unbounded collapsible is zero. Therefore, the measure of the collapsible elements of $\Omega_\pm$ equals the measure of the zero-collapsible elements. 
Recall that $C$ denotes the set of all collapsible elements of $\Omega_\pm$. 
By definition of zero-collapsible, 
the measure of the collection of non-zero-collapsible elements of $\Omega_\pm$ is the sum of the measures of two cylinder sets, so that
$$\mu_p(\Omega_\pm \sm C)=\mu\big(\sC(\widehat{-}+)\big)+\mu\big(\sC(-\widehat{+})\big)=1-p,$$
and $\mu_p \circ c^{-1}(\Omega_\pm)=\mu_p(C)=p$. 

Let $\nu$ be the probability measure $\frac{1}{p} \mu_p \circ c^{-1}$. We will prove that $\nu=\mu_q$. By Proposition \ref{prop:measure uniquely defined} it is sufficient to show that
$\nu$ satisfies statements (1)-(3) of the definition of $\mu_q$. Observe
$$\frac{1}{p} \mu_p \circ c^{-1}\big(\sC(\widehat{+})\big)=\frac{1}{p} \mu_p \big(\sC(+\widehat{+})\big)$$
by Corollary \ref{cor:preimages of cylinders}. Using Proposition \ref{prop:measure uniquely defined}, we evaluate
$\mu_p \big(\sC(+\widehat{+})\big)=\frac{p}{2}$ as desired.

We now prove that $\nu$ satisfies statement (2) in the definition of $\mu_q$. Statement (3) has a similar proof.
Define $\sA$ to be the $\sigma$-algebra generated by cylinder sets $\sC(f)$ where $f:\{m, \ldots, n\} \to \{\pm 1\}$ and $n \leq 0$.
By equation \ref{eq:conditional expectation}, it is enough to show that for all $B \in \sA$,
\begin{equation}
\name{eq:conditional expectation2}
\nu(\{\omega \in B~:~\omega_0=\omega_1\})=\frac{1}{2-p} \nu(B).
\end{equation}
Note that the right hand side of the equation then evaluates to $\frac{1}{p(2-p)} \mu_p \circ c^{-1}(B)$. We will show this equals the left hand side. 
Simplifying the left side yields
$$\nu(\{\omega \in B~:~\omega_0=\omega_1\})=\frac{1}{p} \mu_p \circ c^{-1} (\{\omega \in B~:~\omega_0=\omega_1\}).$$
We will try to understand $c^{-1} (\{\omega \in B~:~\omega_0=\omega_1\})$.
For each $n \geq 0$ define the two cylinder sets
$$S_n^+=\sC(+\widehat{+} (-+)^n +) \and S_n^-=\sC(\widehat{-} (-+)^n --).$$
From Corollary \ref{cor:preimages of cylinders}, and the definition of the collapsing map, we have
$$\bigcup_{n=1}^\infty S_n^+ \cup S_n^-=\set{\omega}{$\omega$ is collapsible and $c(\omega)_0=c(\omega)_1$} \quad \textrm{$\mu_p$-almost everywhere.}$$
(An $\omega \in S_n^\pm$ is zero-collapsible but not necessarily unbounded-collapsible,
and the non-unbounded-collapsible $\omega$ form a set of measure zero.) Since this union is disjoint, we have
\begin{equation}
\name{eq:summation}
\frac{1}{p} \mu_p \circ c^{-1} (\{\omega \in B~:~\omega_0=\omega_1\})=\frac{1}{p} \sum_{n=0}^\infty \mu_p\big(c^{-1}(B) \cap (S_n^+ \cup S_n^-) \big).
\end{equation}
We break $B$ into two disjoint pieces: $B_+=\set{\omega \in B}{$\omega_0=1$}$ and 
$B_-=\set{\omega \in B}{$\omega_0=-1$}$. 
Observe that $c^{-1}(B_+) \in \sA$ up to a set of $\mu_p$ measure zero (due to the necessity that
$c^{-1}(\omega)$ be unbounded collapsible on the right.) Since any $\omega \in c^{-1}(B_+)$ satisfies
$\omega_{-1}=\omega_0=1$, we have
$$c^{-1}(B_+) \cap S_n^+=c^{-1}(B_+) \cap \Big(\bigcap_{i=0}^{2n-1} \set{\omega}{$\omega_i \neq \omega_{i+1}$}\Big) \cap \set{\omega}{$\omega_{2n} = \omega_{2n+1}$}.$$
By inductively applying statement (2) in the definition of $\mu_p$ (or equation \ref{eq:conditional expectation}) and shift invariance of $\mu_p$ we have 
$\mu_p \circ c^{-1}(B_+ \cap S_n^+)=(1-p)^{2n} p \mu_p \circ c^{-1}(B_+).$ Note also that $B_+ \cap S_n^-=\emptyset$ for all $n$. 
Therefore, by equation \ref{eq:summation}, we have 
$$\frac{1}{p} \mu_p \circ c^{-1} (\{\omega \in B_+~:~\omega_0=\omega_1\})= \Big(\sum_{n=0}^\infty (1-p)^{2n}\Big)\mu_p \circ c^{-1}(B_+)=
\frac{1}{p(2-p)}\mu_p \circ c^{-1}(B_+),$$
as expected so that $B_+$ satisfies equation \ref{eq:conditional expectation2}. 
The case of $B_-$ is very much similar. But, observe that $c^{-1}(B_-) \not \in \sA$ up to measure zero, because the conditions that $\omega_0=-1$ and $\omega$ is zero-collapsible
imply that $\omega_1=-1$. However, we do have that $\sigma(B_-) \in \sA$. 
$$c^{-1}(B_+) \cap S_n^+=c^{-1}(B_+) \cap \Big(\bigcap_{i=0}^{2n-1} \set{\omega}{$\omega_i \neq \omega_{i+1}$}\Big) \cap \set{\omega}{$\omega_{2n} = \omega_{2n+1}$}.$$
The reader may check that $\mu_p \circ c^{-1}(B_- \cap S_n^-)=(1-p)^{2n} p \mu_p \circ c^{-1}(B_-),$ so that the same computation proves
equation \ref{eq:conditional expectation2} for $B$ replaced by $B_-$. Summing the equation from the $B_+$ case with the equation from the $B_-$ case proves equation \ref{eq:conditional expectation2} in general.
\end{proof}

Recall that if $c(\omega)=\eta$, then $\omega=\sI_f(\eta)$ for some $f:\Z \to \{(-+)^n~:~n \geq 0\}$. See Proposition \ref{prop:inverses of collapsing map}.
The function $\sI_f$ inserts the word $f(0)$ between $\eta_0$ and $\eta_1$. 
Note that $\iota:\omega \mapsto \frac{1}{2}\ell\circ f(0)$ defines a function from the set $C$ of collapsible elements of $\Omega_\pm$  to $\Z$. 
We would like to understand the conditional expectation of $\iota$ given $c(\omega)_0$ and $c(\omega)_1$.  
The reason for interest in this is given by Lemma \ref{lem:returns} and Corollary \ref{cor:matrix formula}.

\begin{lemma}
\name{lem:expectation for insertion length}
Suppose $0<p \leq 1$, and 
define the map $\iota:C \to \Z$ as above. Let $s_0, s_1 \in \{\pm 1\}$. Then the conditional expectation of $\iota$ given $c(\omega)_0=s_0$ and $c(\omega)_1=s_1$ is given by 
\begin{enumerate}
\item $\displaystyle 1+\frac{(1-p)^2}{p(2-p)}$ if $s_0 s_1=-+$, and
\item $\displaystyle \frac{(1-p)^2}{p(2-p)}$ otherwise.
\end{enumerate}
\begin{proof}
Let $n \in \Z$ with $n \geq 0$. By Corollary \ref{cor:preimages of cylinders}, the sets of the form 
$$U(s_0, s_1, n)=\set{\omega}{$c(\omega_0)=s_0$, $c(\omega_1)=s_1$, and $\iota(\omega)=n$}$$
are cylinder sets. (Observe the special case $U(-,+,0)=\emptyset$, by Corollary \ref{cor:preimages of cylinders}.)
Let $q=\frac{1}{2-p}$. Then since each preimage under $c$ of a point the cylinder set $\sC(s_0 s_1)$ lies in some $U(s_0,s_1,n)$ we
have 
$$\sum_{n=0}^\infty \mu_p\big(U(s_0,s_1,n)\big)=p\mu_q\big(\sC(s_0 s_1)\big)$$
The conditional expectation of $\iota=n$ is therefore given by 
$$C(s_0s_1)=\frac{1}{p\mu_q\big(\sC(s_0 s_1)\big)} \sum_{n=0}^\infty n\mu_p\big(U(s_0,s_1,n)\big).$$
We can apply Corollary \ref{cor:preimages of cylinders} to describe exactly which cylinder set $U(s_0,s_1,n)$ is, and
apply Proposition \ref{prop:measure uniquely defined} to compute its measure. This yields the following formulas.
$$C(-+)=\frac{2}{p(1-q)} \sum_{n=0}^\infty \frac{n}{2} p^2 (1-p)^{2n-1}. \qquad
C(+-)=\frac{2}{p(1-q)} \sum_{n=0}^\infty \frac{n}{2} p^2(1-p)^{2n+1}.$$
$$C(--)=C(++)=\frac{2}{pq} \sum_{n=0}^\infty \frac{n}{2} p^2(1-p)^{2n}.$$
\end{proof}
\end{lemma}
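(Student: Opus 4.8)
The plan is to read off each conditional expectation as a ratio whose numerator is the first moment $\sum_{n\ge 0} n\,\mu_p\big(U(s_0,s_1,n)\big)$ and whose denominator is the total measure of the conditioning event, equal to $p\,\mu_q\big(\sC(s_0 s_1)\big)$ by Theorem \ref{thm:img}, and then to evaluate the resulting geometric-type series in closed form. The only inputs needed are the explicit description of the preimage of a cylinder set (Corollary \ref{cor:preimages of cylinders}) and the cylinder-measure formula (Proposition \ref{prop:measure uniquely defined}).

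First I would pin down each level set $U(s_0,s_1,n)=\{\omega\in C:\ c(\omega)_0=s_0,\ c(\omega)_1=s_1,\ \iota(\omega)=n\}$ as a cylinder set. Since $c(\omega)_0=\omega_0$ and $\iota(\omega)=n$ records that the word $(-+)^n$ is inserted between the kept positions $0$ and $2n+1$, Corollary \ref{cor:preimages of cylinders} identifies $U(s_0,s_1,n)$, up to the $\mu_p$-null set of non-unbounded-collapsible points, with $\sC\big(s_-\,\widehat{s_0}\,(-+)^n\,s_1\,s_+\big)$, where $s_-=+$ exactly when $s_0=+$ and $s_+=-$ exactly when $s_1=-$. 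The boundary letters $s_\pm$ are precisely what forces positions $0$ and $2n+1$ to survive the collapse; in particular for $s_0 s_1=-+$ the index $n=0$ is excluded, so $U(-,+,0)=\emptyset$.

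Next I would compute $\mu_p\big(U(s_0,s_1,n)\big)$ from Proposition \ref{prop:measure uniquely defined}. In every one of the four sign cases the defining word begins and ends with a matching adjacent pair, while its interior alternating block $(-+)^n$ contributes only non-matching pairs; hence it has exactly two matching pairs and $\mu_p\big(U(s_0,s_1,n)\big)=\tfrac12 p^2(1-p)^{E(n)}$, where the number of non-matching pairs is $E(n)=2n-1$ for $-+$, $E(n)=2n+1$ for $+-$, and $E(n)=2n$ for the matching cases $--$ and $++$ (which therefore give equal answers). This reproduces the three displayed series. I would then sum them using $\sum_{n\ge 0} n x^n = x/(1-x)^2$ with $x=(1-p)^2$ and the identity $1-(1-p)^2=p(2-p)$; dividing by $p\,\mu_q\big(\sC(s_0 s_1)\big)$ and substituting $q=1/(2-p)$, so that $1-q=(1-p)/(2-p)$, collapses each case to a rational function of $p$, yielding $C(-+)=1/\big(p(2-p)\big)$ and $C(+-)=C(--)=C(++)=(1-p)^2/\big(p(2-p)\big)$. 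The stated form of the first case follows from the identity $1/\big(p(2-p)\big)=1+(1-p)^2/\big(p(2-p)\big)$, which is immediate since $p(2-p)+(1-p)^2=1$.

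The main obstacle is the bookkeeping in the second step: getting the cylinder word for $U(s_0,s_1,n)$ exactly right, with the correct boundary letters $s_\pm$ and the correct match count, so that the exponent $E(n)$ is right in each case. The degenerate case $U(-,+,0)=\emptyset$ and the fact that the interior block is empty when $n=0$ need a moment's care, though the $n=0$ term is immaterial for the first moment. Once the combinatorial input is fixed, the summation and algebraic simplification are entirely routine.
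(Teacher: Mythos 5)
Your proposal is correct and follows essentially the same route as the paper's proof: the same decomposition of the conditioning event into the cylinder sets $U(s_0,s_1,n)$, the same use of Theorem \ref{thm:img} to identify the denominator $p\,\mu_q\big(\sC(s_0 s_1)\big)$, and the same match-count computation via Corollary \ref{cor:preimages of cylinders} and Proposition \ref{prop:measure uniquely defined}, producing exactly the three series displayed in the paper. Your closing evaluation of those series (which the paper leaves implicit) is also correct, including the exclusion of $U(-,+,0)$ and the identity $p(2-p)+(1-p)^2=1$ reconciling the two stated forms.
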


\subsection{The renormalization cocycle}
Let $0<p<1$ and $0<q<1$, and define $\nu=\mu_p \times \mu_q \times \mu_N$. This is a $\Phi$-invariant measure.
In this section, we pursue the philosophy laid out beneath the Renormalization Theorem \ref{thm:renormalization}
to exhibit a formula for the total measure of those $(\omega, \omega', \v)$ without stable periodic orbits.

Recall the definition of the embedding $\epsilon$ of $\R^6$ into the space of Borel measurable functions $\sM$ on $X$ given above equation \ref{eq:L}. 
We use $L$ to denote $\epsilon(\R^6)$. First, we would like to know how to compute $\int_X \epsilon(\bp)(x)~d\nu(x)$. This depends nicely on $p$ and $q$, and linearly on $\bp$.  

\begin{proposition}
\name{prop:measure vector}
Let $p$ and $q$ be as above and $\nu=\mu_p \times \mu_q \times \mu_N$. Then for any $\bp \in \R^6$ we have 
$$\int_X \epsilon(\bp)(x)~d\nu(x)=\m_{p,q} \cdot \bp \qquad \textrm{where $\m_{p,q}=(\frac{1-p}{4}, \frac{1-p}{4}, \frac{p}{2},\frac{1-q}{4}, \frac{1-q}{4}, \frac{q}{2}) \in \R^6$.}$$
\end{proposition}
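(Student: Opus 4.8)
The plan is to reduce everything to the six step-class measures $\nu(\Step_i)$ and then read them off from results already established. By equation \ref{eq:dot product integral}, for any $\bp \in \R^6$ we have $\int_X \epsilon(\bp)(x)~d\nu(x) = \sum_{i=1}^6 \nu(\Step_i)\bp_i$, so the asserted identity is precisely the statement that the vector $\big(\nu(\Step_1), \ldots, \nu(\Step_6)\big)$ equals $\m_{p,q}$. Thus it suffices to compute each $\nu(\Step_i)$ and check it against the corresponding coordinate.

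First I would invoke Proposition \ref{prop:step characterization of P4} with $\mu = \mu_p$ and $\mu' = \mu_q$, which expresses the step-class measures in terms of length-two cylinder measures: $\nu(\Step_1) = \tfrac12 \mu_p\big(\sC(-+)\big)$, $\nu(\Step_2) = \tfrac12 \mu_p\big(\sC(+-)\big)$, $\nu(\Step_3) = \tfrac12 \mu_p\big(\sC(--) \cup \sC(++)\big)$, and analogously $\nu(\Step_4), \nu(\Step_5), \nu(\Step_6)$ in terms of $\mu_q$. This is the step that links the abstract functional $\epsilon(\bp)$ to concrete shift-invariant measures.

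Next I would evaluate these cylinder measures using the explicit formula \ref{eq:measures of cylinder sets} of Proposition \ref{prop:measure uniquely defined}. A length-two cylinder $\sC(s_0 s_1)$ has a single adjacent pair, so its measure is $\tfrac12(1-p)$ when $s_0 \neq s_1$ (no match, $k=0$) and $\tfrac{p}{2}$ when $s_0 = s_1$ (one match, $k=1$). Hence $\mu_p\big(\sC(-+)\big) = \mu_p\big(\sC(+-)\big) = \tfrac{1-p}{2}$ and $\mu_p\big(\sC(--)\cup\sC(++)\big) = \tfrac{p}{2} + \tfrac{p}{2} = p$. Substituting gives $\nu(\Step_1) = \nu(\Step_2) = \tfrac{1-p}{4}$ and $\nu(\Step_3) = \tfrac{p}{2}$; the identical computation with $\mu_q$ yields $\nu(\Step_4) = \nu(\Step_5) = \tfrac{1-q}{4}$ and $\nu(\Step_6) = \tfrac{q}{2}$. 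These are exactly the coordinates of $\m_{p,q}$, which finishes the proof once combined with equation \ref{eq:dot product integral}.

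I do not expect a serious obstacle: the proposition is essentially a bookkeeping exercise assembling two earlier results. The only points that require care are (i) pairing each step class with the correct marginal ($\mu_p$ for the horizontal classes $1$--$3$, $\mu_q$ for the vertical classes $4$--$6$), and (ii) reading the exponents in equation \ref{eq:measures of cylinder sets} so that a single non-matching transition contributes a factor $(1-p)$ while a matching one contributes $p$. Once the four length-two cylinder measures are in hand, the identification with $\m_{p,q}$ is immediate and linearity in $\bp$ is automatic.
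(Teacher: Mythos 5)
Your proposal is correct and is essentially the paper's argument: the paper's proof consists of the single observation that the $i$-th entry of $\m_{p,q}$ equals $\nu(\Step_i)$, which implicitly rests on exactly the two ingredients you cite, namely Proposition \ref{prop:step characterization of P4} and the cylinder-set formula of Proposition \ref{prop:measure uniquely defined}, combined through equation \ref{eq:dot product integral}. Your write-up simply makes these steps explicit, and the arithmetic ($\mu_p(\sC(\mp\pm))=\tfrac{1-p}{2}$, $\mu_p(\sC(--)\cup\sC(++))=p$) is right.
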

\begin{proof}
The $i$-th entry of $\m_{p,q}$ is just $\nu(\Step_i)$. 
\end{proof}

We would also like to compute the matrix $M'$ described in Corollary \ref{cor:matrix formula}. 

\begin{proposition}
\name{prop:matrices random case}
We have $C(\nu,1) \circ \epsilon(\bp) \in L$ for all $\bp \in L$, and $\nu \circ \rho^{-1}$-a.e. we have $C(\nu,1) \circ \epsilon(\bp)=\epsilon \big( M_{p,q} \bp\big)$ where
$$\textstyle M_{p,q}=\left[\begin{array}{rrrrrr}
1 & 0 & 2 & 0 & 0 & 2 \\
0 & 1 & 0 & 0 & 0 & 0 \\
0 & 0 & 1 & 0 & 0 & 0 \\
0 & 0 & 2 & 1 & 0 & 2 \\
0 & 0 & 0 & 0 & 1 & 0 \\
0 & 0 & 0 & 0 & 0 & 1 \\
\end{array}\right]+
\frac{(1-p)^2}{p(2-p)} \left[\begin{array}{rrrrrr}
1 & 1 & 0 & 0 & 0 & 2 \\
1 & 1 & 0 & 0 & 0 & 2 \\
1 & 1 & 0 & 0 & 0 & 2 \\
0 & 0 & 0 & 0 & 0 & 0 \\
0 & 0 & 0 & 0 & 0 & 0 \\
0 & 0 & 0 & 0 & 0 & 0 \\
\end{array}\right]+
\frac{(1-q)^2}{q(2-q)} \left[\begin{array}{rrrrrr}
0 & 0 & 0 & 0 & 0 & 0 \\
0 & 0 & 0 & 0 & 0 & 0 \\
0 & 0 & 0 & 0 & 0 & 0 \\
0 & 0 & 2 & 1 & 1 & 0 \\
0 & 0 & 2 & 1 & 1 & 0 \\
0 & 0 & 2 & 1 & 1 & 0 \\
\end{array}\right].$$
\end{proposition}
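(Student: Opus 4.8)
The plan is to apply Corollary \ref{cor:matrix formula}, which reduces the computation of $M_{p,q}$ to identifying, for each step class $j\in\{1,\dots,6\}$, a constant $4m_j+1$ equal to the conditional expectation of the return time $\ret{1}{x}$ given $\rho(x)\in\Step_j$, at $\nu$-a.e.\ point of $\rho^{-1}(\Step_j)$. Once the six numbers $m_j$ are found, $M_{p,q}$ is exactly the matrix $M'$ displayed in that corollary, and recognizing $M'$ as the stated sum of three matrices is a routine algebraic rearrangement I would carry out last. So the whole argument comes down to computing the $m_j$.

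First I would express the return time through insertion lengths. Write $\rho(x)=(c(\omega),c(\omega'),\v)$ and let $\v'$ be the directional component of $\Phi(x)$. On $\sR_1$ both $\omega$ and $\omega'$ are zero-collapsible, so $c(\omega)_0=\omega_0$ and $c(\omega')_0=\omega'_0$; hence the sign $s=\omega_0\omega'_0$ governing $\Phi$ is the same for $x$ and for $\rho(x)$, and the directional component of $\Phi(\rho(x))$ also equals $\v'$. Consequently the horizontal/vertical dichotomy and the sign $sb$ (resp.\ $sa$) determining the step class of $\rho(x)$ are read off from $\v'$ just as for $x$. By the Return times Lemma (Lemma \ref{lem:returns}), when $\v'=(1,0)$ we have $\ret{1}{x}=2\ell\!\big(f(0)\big)+1=4\iota(\omega)+1$, and similarly with $f(-1)$, $f'(0)$, $f'(-1)$ in the remaining three cases; thus the quantity $m(x)$ defined by $\ret{1}{x}=4m(x)+1$ equals the relevant insertion length $\tfrac12\ell\!\big(f(\cdot)\big)$ or $\tfrac12\ell\!\big(f'(\cdot)\big)$.

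Next I would match each step class to the pair $(s_0,s_1)$ feeding Lemma \ref{lem:expectation for insertion length}. When $\v'=(1,0)$, the step class of $\rho(x)$ is the $c(\omega)_0 c(\omega)_1$-horizontal class and $m(x)=\iota(\omega)$ is governed by $(c(\omega)_0,c(\omega)_1)$, so Lemma \ref{lem:expectation for insertion length} applies directly and gives $1+\tfrac{(1-p)^2}{p(2-p)}$ when $(c(\omega)_0,c(\omega)_1)=(-,+)$ (class $1$) and $\tfrac{(1-p)^2}{p(2-p)}$ otherwise (classes $2$ and $3$). When $\v'=(-1,0)$, the step class is the $c(\omega)_{-1}c(\omega)_0$-horizontal class and $m(x)=\tfrac12\ell\!\big(f(-1)\big)$ is governed by $(c(\omega)_{-1},c(\omega)_0)$; here I would invoke shift-invariance of $\mu_p$ and of $\mu_p\circ c^{-1}$ (Proposition \ref{prop:collapsing measures}) together with the shift-covariance of the insertion data to conclude that the conditional expectation of $\tfrac12\ell\!\big(f(-1)\big)$ given $(c(\omega)_{-1},c(\omega)_0)=(s_0,s_1)$ equals that of $\iota$ given $(c(\omega)_0,c(\omega)_1)=(s_0,s_1)$. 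Hence the two routes to each horizontal step class yield the \emph{same} constant, confirming the hypothesis of Corollary \ref{cor:matrix formula} and giving $m_1=1+\tfrac{(1-p)^2}{p(2-p)}$ and $m_2=m_3=\tfrac{(1-p)^2}{p(2-p)}$. The vertical classes are treated identically with $\omega'$, $f'$, and $q$ replacing $\omega$, $f$, and $p$, yielding $m_4=1+\tfrac{(1-q)^2}{q(2-q)}$ and $m_5=m_6=\tfrac{(1-q)^2}{q(2-q)}$.

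The main obstacle is the bookkeeping in this matching step: one must check that the label of the step class of $\rho(x)$ genuinely selects the pair $(s_0,s_1)$ entering Lemma \ref{lem:expectation for insertion length}, and that the two signs of $\v'$ (equivalently, the inserted word at position $0$ versus $-1$) produce the same conditional expectation. The shift-invariance comparison for the position-$(-1)$ case is the only point requiring genuine care; everything else, namely substituting the six values $m_j$ into $M'$ and reading off the displayed decomposition into three matrices, is a direct computation.
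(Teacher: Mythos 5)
Your proposal is correct and follows essentially the same route as the paper's proof: both apply Corollary \ref{cor:matrix formula}, convert the return times to insertion lengths via Lemma \ref{lem:returns}, evaluate the conditional expectations with Lemma \ref{lem:expectation for insertion length} together with shift invariance (the point you rightly flag for the $f(-1)$ versus $f(0)$ cases), and substitute the resulting $m_j$ into $M'$. The paper simply states these steps more tersely; your extra bookkeeping on matching step classes to the pairs $(s_0,s_1)$ and on why the two signs of $\v'$ give the same constant is a legitimate filling-in of details the paper leaves implicit.
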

\begin{proof}
Let $x=(\omega, \omega', \v) \in X$. 
To apply Corollary \ref{cor:matrix formula}, we need to know the conditional expectations for the return times $\ret{1}{x}$ given that $\rho(x) \in \Step_j$. By Lemma \ref{lem:returns},
this is equivalent to knowing the conditional expectation of one of $\ell \circ f(0)$, $\ell \circ f(-1)$, $\ell \circ f(0)$, or $\ell \circ f(-1)$. Here $f$ and $f'$ are
defined so that $\sI_f \circ c(\omega)=\omega$ and $\sI_{f'} \circ c(\omega')=\omega'$. By Lemma \ref{lem:expectation for insertion length} and shift invariance,
these numbers depend only the fact that $\rho(x)$ has step type $j$.  Moreover, the vector of values $\m=(m_1, \ldots, m_6)$ described in Corollary \ref{cor:matrix formula}
is given by 
$$\m=(1,0,0,1,0,0)+\frac{(1-p)^2}{p(2-p)}(1,1,1,0,0,0)+ \frac{(1-q)^2}{q(2-q)}(0,0,0,1,1,1).$$
Plugging these values into the matrix in Corollary \ref{cor:matrix formula} produces this proposition.
\end{proof}

Now we wish to restate Corollary \ref{cor:approach} in this context. Before we do this, we apply a change of coordinates.
For real numbers $m,n \geq 0$, define
\begin{equation}
\name{eq:coordinate change}
p(m)=\frac{m}{m+1} \and q(n)=\frac{n}{n+1}.
\end{equation}
Observe that we can restate the equation given in Theorem \ref{thm:img} as
$$\mu_{p(m)} \circ c^{-1}=\frac{m}{m+1} \mu_{p(m+1)}$$
So, by induction we have $\mu_{p(m)} \circ c^{-k}=\frac{m}{m+k} \mu_{p(m+k)}$.

Recall that $\NS$ denotes the collection of points in $X$ without stable periodic orbits.
\begin{lemma}
\name{lem:limit for random cases}
Suppose $m$ and $n$ are positive real numbers. Let $\nu=\mu_{p(m)} \times \mu_{q(n)} \times \mu_N$. 
Then
$$\nu(\NS)=\lim_{k \to \infty} \frac{mn}{(m+k)(n+k)} \m_{p(m+k),q(n+k)} \cdot \Big(\big(\prod_{i=k-1}^{0} M_{p(m+i),q(n+i)}\big) \1\Big),$$
where $\prod_{i=k-1}^{0} M_{p(m+i),q(n+i)}=M_{p(m+k-1),q(n+k-1)} \ldots M_{p(m+1),q(n+1)}M_{p(m),q(n)}$. In this formula, 
$\1 \in \R^6$ is the vector with all entries equal one, so that $\epsilon(\1)=\bbone$, the vector $\m_{p(m+k),q(n+k)}$ is defined as in Proposition
\ref{prop:measure vector}, and the matrices $M_{p(n+i),q(n+i)}$ are defined in Proposition \ref{prop:matrices random case}.
\end{lemma}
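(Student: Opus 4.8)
The plan is to apply the Main Approach (Corollary \ref{cor:approach}), which rewrites $\nu(\sO_k)=\int_X C(\nu,k)(\bbone)(x)\,d\nu\circ\rho^{-k}(x)$ and gives $\nu(\NS)=\lim_{k\to\infty}\nu(\sO_k)$, and then to evaluate the right-hand side inside the finite-dimensional invariant subspace $L$. The mechanism that makes this tractable is that $\bbone=\epsilon(\1)\in L$ and, by Proposition \ref{prop:matrices random case}, $L$ is invariant under each one-step operator; so the entire computation reduces to multiplying the vector $\1$ by a string of $6\times 6$ matrices and pairing the result against a measure vector. Before invoking Corollary \ref{cor:approach} I must check that $\nu$ is robustly renormalizable. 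By Theorem \ref{thm:img} together with the coordinate change \ref{eq:coordinate change}, $\mu_{p(m)}\circ c^{-k}=\tfrac{m}{m+k}\mu_{p(m+k)}$, and since $0<p(m+k)<1$ the cylinders shrinking to $\omegaalt$ have mass decaying geometrically by Proposition \ref{prop:measure uniquely defined}; hence $\mu_{p(m+k)}$ (and likewise $\mu_{q(n+k)}$) has no atom at $\omegaalt$. By Proposition \ref{prop:omegaalt2} this yields $\nu\circ\rho^{-k}(\NUC)=0$ for all $k$, i.e.\ robust renormalizability.

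Next I would identify the iterated push-forwards. Since $\rho(\omega,\omega',\v)=(c(\omega),c(\omega'),\v)$, the formula above gives $\nu\circ\rho^{-i}=\tfrac{mn}{(m+i)(n+i)}\,\mu_{p(m+i)}\times\mu_{q(n+i)}\times\mu_N$. The crucial observation is that conditional-expectation operators are insensitive to rescaling the reference measure: both sides of \ref{eq:cond exp integral} scale together under $\nu\mapsto\lambda\nu$ (the retraction $r_n(f;x)$ does not depend on $\nu$), so $C(\nu\circ\rho^{-i},1)$ coincides with the operator for the \emph{normalized} product $\mu_{p(m+i)}\times\mu_{q(n+i)}\times\mu_N$. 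Because $p(m+i),q(n+i)\in(0,1)$, Proposition \ref{prop:matrices random case} applies at every level and tells us this operator acts on $L$ via the matrix $M_{p(m+i),q(n+i)}$.

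Then I would expand the cocycle. Using the composition relation \ref{eq:cocycle composition} iteratively (equivalently \ref{eq:expand cocycle}), $C(\nu,k)$ is the composite of the one-step operators associated to $\nu\circ\rho^{0},\nu\circ\rho^{-1},\ldots,\nu\circ\rho^{-(k-1)}$, applied innermost-first. Feeding in $\epsilon(\1)$ and applying the matrices from the previous paragraph in turn, the $i=0$ factor producing $M_{p(m),q(n)}$ first, yields $C(\nu,k)(\bbone)=\epsilon(\bw_k)$ with $\bw_k=\big(\prod_{i=k-1}^{0}M_{p(m+i),q(n+i)}\big)\1$, matching the ordering in the statement. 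Integrating against $d\nu\circ\rho^{-k}$ as in Corollary \ref{cor:approach}, factoring out the total mass $\tfrac{mn}{(m+k)(n+k)}$ and evaluating the integral over the normalized probability measure by Proposition \ref{prop:measure vector}, gives $\nu(\sO_k)=\tfrac{mn}{(m+k)(n+k)}\,\m_{p(m+k),q(n+k)}\cdot\bw_k$. Sending $k\to\infty$ and invoking $\nu(\NS)=\lim_k\nu(\sO_k)$ completes the argument.

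The one genuinely delicate point is the bookkeeping around normalization. The intermediate measures $\nu\circ\rho^{-i}$ are sub-probability measures, whereas Propositions \ref{prop:matrices random case} and \ref{prop:measure vector} are phrased for the normalized products. Scale-invariance of conditional expectation is what lets the matrices $M_{p(m+i),q(n+i)}$ be applied at full strength regardless of the mass of $\nu\circ\rho^{-i}$, but the accumulated factor $\tfrac{mn}{(m+k)(n+k)}$ must be carried explicitly and released only at the final integration against $\nu\circ\rho^{-k}$. One must also confirm that the several $\nu\circ\rho^{-i}$-a.e.\ identities chain together consistently, which is precisely the content of the cocycle relation \ref{eq:cocycle composition} that is already established.
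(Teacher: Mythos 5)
Your proposal is correct and takes essentially the same route as the paper: both apply Corollary \ref{cor:approach}, identify the push-forwards $\nu\circ\rho^{-i}$ via Theorem \ref{thm:img} and the coordinate change \ref{eq:coordinate change}, expand the cocycle as in Equation \ref{eq:expand cocycle}, and evaluate using Propositions \ref{prop:matrices random case} and \ref{prop:measure vector}. Your explicit verification of robust renormalizability and of the scale-invariance of the conditional-expectation operators merely fills in bookkeeping the paper leaves implicit.
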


We already described all the tools necessary to reformulate Corollary \ref{cor:approach} in this context as above. 
Observe that the quantity in the limit corresponds to $\int_X C(\nu, n)(\bbone)~d\nu \circ \rho^{-n}$ translated under the embedding $\epsilon$. 
The only remark worth making is that 
$$C(\nu, n)(\bbone)=\epsilon \Big(\big( \prod_{i=k-1}^{0} M_{p(m+i),q(n+i)} \big)\1\Big)$$
by expanding out the cocycle as in Equation \ref{eq:expand cocycle}
and applying Proposition \ref{prop:matrices random case}. 

\subsection{Evaluating the limit}
\name{sect:evaluating the limit}
Our goal for the remainder of this section is to prove that the limit described in Lemma \ref{lem:limit for random cases} is zero. 
In this subsection, we introduce several simplifications which make this goal easier to attain.

We write $\sL(m,n)$ for the quantity $\nu(\NS)$ where $\nu=\mu_{p(m)} \times \mu_{q(n)} \times \mu_N$. Observe this is the limit described in the Lemma \ref{lem:limit for random cases}.

\begin{proposition}[Simplification 1]
If $m' \geq m$ and $n' \geq n$, then $\sL(m,n)=0$ implies  $\sL(m',n')=0$.
\end{proposition}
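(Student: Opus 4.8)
The plan is to prove the two-variable monotonicity by reducing it to a one-variable statement and establishing that one-variable statement by a monotone coupling. First I would observe that it suffices to prove the two separate implications $\sL(m,n)=0 \Rightarrow \sL(m',n)=0$ (for $m'\ge m$, with $n$ fixed) and $\sL(m,n)=0\Rightarrow\sL(m,n')=0$ (for $n'\ge n$), since composing them gives the proposition; moreover the second follows from the first by the symmetry of the construction under interchanging the two factors $\mu_{p(m)}$ and $\mu_{q(n)}$, i.e. interchanging the roles of $\omega$ and $\omega'$. Recalling that $\sL(m,n)=\nu(\NS)$ with $\nu=\mu_{p(m)}\times\mu_{q(n)}\times\mu_N$, and that $\NS$ is the set of parameters whose curve is bi-infinite, the one-variable claim says: enlarging $m$ (equivalently enlarging $p(m)\in(0,1)$, hence lengthening the runs of $\omega$) cannot increase the measure of bi-infinite curves.

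To build the coupling I would use the Markov description of $\mu_p$: a sample is determined by $\omega_0$ (a fair coin) together with the i.i.d. Bernoulli$(1-p)$ ``switch'' variables recording where consecutive symbols differ, so that the run lengths of $\omega$ are i.i.d. geometric with mean $1/(1-p)$. Since geometric laws are stochastically increasing in their mean, for $p\le p'$ there is a monotone coupling of the run-length sequences with $\hat L_i\ge L_i$ run by run. Matching the (alternating) run signs, this realizes a sample $\hat\omega\sim\mu_{p(m')}$ as a sample $\omega\sim\mu_{p(m)}$ in which each run has been lengthened, i.e. $\hat\omega$ is obtained from $\omega$ by repeatedly inserting a copy of a symbol next to an equal symbol. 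In tiling language, $\tiling{\tau_{\hat\omega,\omega'}}$ is obtained from $\tiling{\tau_{\omega,\omega'}}$ by duplicating whole columns, each duplicated column being identical to the adjacent column it copies.

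The crux is then the following local lemma: duplicating a column (inserting a column whose symbol equals that of its neighbour, so that the two columns carry identical tiles in every row) sends each closed curve of $\tiling{\tau_{\omega,\omega'}}$ to a closed curve of the enlarged tiling. Granting this, a closed curve stays closed under the coupling — any fixed closed curve is finite and meets only finitely many columns, so only finitely many duplications fall within its span (the rest merely translate it), and each such duplication preserves closedness — whence $\{(\hat\omega,\omega',\v)\in\NS\}\subseteq\{(\omega,\omega',\v)\in\NS\}$ along the coupling; integrating gives $\sL(m',n)\le\sL(m,n)=0$. I expect the lemma itself to be the main obstacle: like the Crossing Dividing Lines Lemma, it is a finite case check on the two identical columns and the seam between them, showing that the arcs transit the inserted column in a way that merely lengthens the curve rather than reconnecting it differently, with care needed where the curve meets the duplicated column several times. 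The remaining technical point is to make the run-length coupling rigorous for two-sided stationary sequences — handling the size-biased run through the origin and the measurability of the duplication map — which I would phrase directly in terms of the shift-invariant switch processes rather than anchoring at a single index.
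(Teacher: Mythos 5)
Your strategy---reduce to one-variable monotonicity by the diagonal symmetry, then realize the change $p(m)\mapsto p(m')$ by a monotone coupling of run lengths, i.e.\ by duplicating columns---is genuinely different from what the paper does, but it fails at exactly the step you identify as the crux: \emph{duplicating a column does not send closed curves to closed curves}. Here is an explicit counterexample. Let $\omega_m=1$ for all $m\neq 1$, $\omega_1=-1$, and let $\omega'_n=1$ for all $n\neq 0$, $\omega'_0=-1$. By Proposition \ref{prop:loop4}, the curve entering the square $(0,0)$ from above (normal $\v=(0,-1)$) is a closed loop of length four; iterating Equation \ref{eq:Phi} gives the cycle of (square, normal) pairs $\big((0,0),(0,-1)\big)\to\big((1,0),(1,0)\big)\to\big((1,1),(0,1)\big)\to\big((0,1),(-1,0)\big)\to\big((0,0),(0,-1)\big)$. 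Now duplicate column $0$, i.e.\ pass to $\hat\omega$ with $\hat\omega_0=\hat\omega_1=1$, $\hat\omega_2=-1$, and $\hat\omega_m=1$ otherwise; this lengthens one run of $\omega$ by one symbol, exactly the elementary move of your coupling. Tracing the curve through the same edge: at the square $(1,0)$ we now have $s=\hat\omega_1\omega'_0=-1$, so instead of turning upward the curve exits through the bottom into $(1,-1)$, and from there it is trapped in the quadrant $\{m\le 1,\ n\le -1\}$, where every tile is $T_{1}$; it therefore runs off in an infinite staircase and is bi-infinite. So closedness through a fixed edge is not preserved, the pointwise inclusion $\{(\hat\omega,\omega',\v)\in\NS\}\subseteq\{(\omega,\omega',\v)\in\NS\}$ fails, and the integration step collapses. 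The structural reason is that whether a curve closes is a matching condition between the \emph{positions} of the sign changes of $\omega$ and of $\omega'$: inserting a single symbol shifts all sign changes to its right by one column and reroutes the curve through different rows. The only insertion/deletion that provably respects curves is that of $(-+)$ \emph{pairs} (Lemma \ref{lem:crossing dividing lines}), which is precisely why the paper's renormalization collapses such pairs and nothing else. Nor can you salvage the lemma ``for a.e.\ configuration'': for interior parameters the a.e.\ statement is true only because $\sL=0$, which is the theorem under proof.

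For contrast, the paper's proof is short and purely algebraic, working with the formula of Lemma \ref{lem:limit for random cases} rather than with tilings. Since $p(m)=m/(m+1)$ is increasing while $p\mapsto (1-p)^2/\big(p(2-p)\big)$ is decreasing, the matrices of Proposition \ref{prop:matrices random case} satisfy $M_{p(m'+i),q(n'+i)}\le M_{p(m+i),q(n+i)}$ entrywise; as all entries are nonnegative, this ordering persists under products and under application to $\1$, and the prefactor vectors $\frac{mn}{(m+k)(n+k)}\,\m_{p(m+k),q(n+k)}$ for the two parameter pairs have coordinatewise ratio tending to a nonzero constant. Hence each term of the limit defining $\sL(m',n')$ is bounded by a constant multiple of the corresponding term defining $\sL(m,n)$, and $\sL(m,n)=0$ forces $\sL(m',n')=0$. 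If you want a coupling-flavored argument, the curve-preserving surgery available in this model is insertion of $(-+)$ pairs, which moves the parameter along the renormalization orbit $m\mapsto m+1$ of Theorem \ref{thm:img}; that is the mechanism behind Simplification 2, not Simplification 1, and the gap between the two is exactly what the matrix comparison is there to bridge.
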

\begin{proof}
Observe that the functions $p(m)$ and $q(n)$ are increasing, and the functions $p \mapsto \frac{(1-p)^2}{p(2-p)}$ and $q \mapsto \frac{(1-q)^2}{q(2-q)}$ are decreasing. 
This second map is relevant to the definition of $M_{p(m),q(n)}$. Observe then that all entries of $M_{p(m+i),q(n+i)}$ are non-strictly larger
than those of  $M_{p(m'+i),q(n'+i)}$ for all $i$. Therefore, all entries of the product $\big( \prod_{i=k-1}^{0} M_{p(m+i),q(n+i)} \big)\1$
are larger than those of $\big( \prod_{i=k-1}^{0} M_{p(m'+i),q(n'+i)} \big)\1$. Also observe that the ratio
$$ \left. \frac{mn}{(m+k)(n+k)} \m_{p(m+k),q(n+k)} \middle/ \frac{m'n'}{(m'+k)(n'+k)} \m_{p(m'+k),q(n'+k)} \right. $$
tends to a non-zero constant vector as $k \to \infty$,  with vector division interpreted coodinatewise. 
\end{proof}

\begin{proposition}[Simplification 2]
$\sL(m+1,n+1)=0$ implies  $\sL(m,n)=0$.
\end{proposition}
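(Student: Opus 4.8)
The plan is to read off from Lemma \ref{lem:limit for random cases} that the limits defining $\sL(m,n)$ and $\sL(m+1,n+1)$ are built from the \emph{same} sequence of matrices, differing only by a single extra factor and a bounded scalar prefactor; nonnegativity of all the data will then let me dominate the one quantity by the other. To organize this, abbreviate $M_i=M_{p(m+i),q(n+i)}$ and $\m_k=\m_{p(m+k),q(n+k)}$, so that Lemma \ref{lem:limit for random cases} reads
\[
\sL(m,n)=\lim_{k\to\infty}\frac{mn}{(m+k)(n+k)}\,\m_k\cdot\big(M_{k-1}M_{k-2}\cdots M_1 M_0\,\1\big).
\]
Writing out the same formula for $\sL(m+1,n+1)$ and shifting the limit index by one (the factor $M_{p((m+1)+i),q((n+1)+i)}$ is exactly $M_{i+1}$, and $\m_{p((m+1)+K),q((n+1)+K)}=\m_{K+1}$), I would verify that
\[
\sL(m+1,n+1)=\lim_{k\to\infty}\frac{(m+1)(n+1)}{(m+k)(n+k)}\,\m_k\cdot\big(M_{k-1}M_{k-2}\cdots M_1\,\1\big).
\]
Thus the only differences are the scalar $mn$ versus $(m+1)(n+1)$ and the presence of the single rightmost factor $M_0$ acting first on $\1$.

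The key step is to absorb this extra factor. Set $\bw=M_0\1\in\R^6$. Since $0<p(m),q(n)<1$ for $m,n>0$, every entry of $M_{p(m),q(n)}$ is finite and nonnegative (the three matrices in Proposition \ref{prop:matrices random case} have nonnegative entries and the two scalar coefficients are nonnegative), so $\bw$ is a fixed nonnegative vector; let $c=\max_i\bw_i<\infty$, so that $\bw\le c\,\1$ entrywise. Because the same reasoning shows that every $M_i$ and every $\m_k$ has nonnegative entries, left multiplication by $M_{k-1}\cdots M_1$ and the dot product with $\m_k$ both preserve this entrywise inequality, giving
\[
0\;\le\;\m_k\cdot\big(M_{k-1}\cdots M_1\,\bw\big)\;\le\;c\,\m_k\cdot\big(M_{k-1}\cdots M_1\,\1\big).
\]

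Finally I would multiply through by $\tfrac{mn}{(m+k)(n+k)}\ge 0$ and rewrite the upper bound as $\tfrac{mnc}{(m+1)(n+1)}$ times the expression whose limit is $\sL(m+1,n+1)$. Since $M_{k-1}\cdots M_1\,\bw=M_{k-1}\cdots M_1 M_0\,\1$, the middle term is precisely the quantity defining $\sL(m,n)$, while the upper bound tends to $\tfrac{mnc}{(m+1)(n+1)}\,\sL(m+1,n+1)=0$ by hypothesis. The squeeze theorem then yields $\sL(m,n)=0$, as desired. The one point demanding care is the re-indexing in the first paragraph — correctly matching the product ranges and the subscript on $\m_k$ so that the two limits genuinely share the tail $M_{k-1}\cdots M_1$; once that bookkeeping is pinned down, the nonnegativity and domination argument is routine.
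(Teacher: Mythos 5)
Your proof is correct and follows the same route the paper sketches: the paper's (omitted) argument is exactly to ``pull out the right-most matrix'' $M_{p(m),q(n)}$ so that the remaining product is the one appearing in the formula for $\sL(m+1,n+1)$, and then compare using nonnegativity. Your re-indexing of the limit and the entrywise domination $M_0\1\le c\,\1$ supply precisely the bookkeeping and the comparison step the paper leaves to the reader.
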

We omit the proof, because it is simpler than the last. The reader should observe that 
pull out the right-most matrix in the product for the $\sL(m,n)$ case obtains a product relevant to the $\sL(m+1,n+1)$ case.

Combining these two propositions yields the following:
\begin{lemma}[Combined Simplification]
\name{lem:combined simplification}
If $\sL(1,1)=0$, then $\sL(m,n)=0$ for all $m,n>0$. 
\end{lemma}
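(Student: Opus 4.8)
The plan is to chain the two preceding Simplification propositions, with the observation that each needs to be invoked only once. Note first that Simplification 1 alone already yields $\sL(m,n)=0$ for every $m,n \geq 1$, starting from $\sL(1,1)=0$; the only genuinely new content of the Combined Simplification is the regime where $m<1$ or $n<1$, and this is exactly what Simplification 2 supplies. I would therefore phrase the argument so that it treats all positive reals $m,n$ uniformly, rather than splitting into cases.

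First I would apply Simplification 1 with base point $(1,1)$. Since $m,n>0$, the shifted pair $(m+1,n+1)$ satisfies $m+1 \geq 1$ and $n+1 \geq 1$, so $(m+1,n+1)$ dominates $(1,1)$ in both coordinates. Hence the monotone propagation of vanishing in Simplification 1 gives $\sL(m+1,n+1)=0$ from the hypothesis $\sL(1,1)=0$. Next I would apply Simplification 2 a single time: it asserts precisely that $\sL(m+1,n+1)=0$ implies $\sL(m,n)=0$, the downward diagonal shift. Composing these two implications yields $\sL(m,n)=0$ for the arbitrary given $m,n>0$, which is the claim.

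I expect there to be essentially no obstacle here, since the substantive work is already packaged into the two Simplification propositions; the whole point of the Combined Simplification is to record their composition. The single subtlety worth flagging is the verification that the shifted base point $(m+1,n+1)$ genuinely dominates $(1,1)$ in both coordinates, which holds because $m$ and $n$ are \emph{strictly} positive. This strictness is exactly what lets one step of Simplification 2 carry the vanishing from $\sL(m+1,n+1)$ back down into the region $0<m<1$ or $0<n<1$ that Simplification 1 cannot reach from $(1,1)$ on its own.
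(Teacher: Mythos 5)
Your proof is correct and is essentially the paper's own argument: the paper simply states that combining Simplification 1 and Simplification 2 yields the lemma, and your composition makes this explicit in the natural way---Simplification 1 carries $\sL(1,1)=0$ up to $\sL(m+1,n+1)=0$ since $m+1\geq 1$ and $n+1\geq 1$, and a single application of Simplification 2 brings it back down to $\sL(m,n)=0$. Your observation that Simplification 2 is only needed to reach the region where $m<1$ or $n<1$ is a correct and worthwhile clarification, but it is the same proof.
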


It turns out that there are some symmetries which appear in case we want to evaluate a limit of the form $\sL(n,n)$. 
Consider the linear projection $\pi:\R^6 \to \R^2$ and section $\s:\R^2 \to \R^6$ satisfying $\bpi \circ s=\id$  given by 
\begin{equation}
\name{eq:pi}
\bpi(a,b,c,d,e,f)=(a+b+d+e,c+f) \and \s(a,b)=(\frac{a}{4}, \frac{a}{4}, \frac{b}{2}, \frac{a}{4}, \frac{a}{4}, \frac{b}{2})
\end{equation}
We think of $\s$ and $\pi$ as sending row vectors to row vectors.
We observe that for all $\bw \in \R^2$ and all $n$ we have 
\begin{equation}
\name{eq:N}
\s(\bw) M_{p(n),q(n)}=\s(\bw N_n) \quad \textrm{where} \quad N_n=\left[\begin{array}{rr}
1 & 2 \\
0 & 1
\end{array}\right]+ \frac{2}{n(n+2)}
\left[\begin{array}{rr}
1 & 1 \\
1 & 1
\end{array}\right].
\end{equation}
Therefore, we can write
\begin{equation}
\name{eq:simp3}
\sL(1,1)=\lim_{k \to \infty} \frac{1}{(1+k)^2} \left[\begin{array}{rr} \frac{1}{k+1} & \frac{k}{k+1} \end{array}\right] \big(\prod_{n=k}^1 N_n \big)\left[\begin{array}{r} 1 \\ 1 \end{array}\right],
\end{equation}
where $\prod_{n=k}^1 N_n=N_k N_{k-1} \ldots N_1$. 

We will further simplify this expression. Define the matrices
\begin{equation}
\name{eq:AB}
A=\left[\begin{array}{rr}
1 & 2 \\
0 & 1
\end{array}\right],
\quad 
B=\left[\begin{array}{rr}
1 & 1 \\
1 & 1
\end{array}\right],
\and
B_n=\frac{1}{n}B.
\end{equation}

\begin{lemma}[Final Simplification]
\name{lem:final simplification}
$$\sL(1,1)=\frac{1}{12} \lim_{k \to \infty} \left[\begin{array}{rr} \frac{1}{k+1} & \frac{k}{k+1} \end{array}\right] 
\prod_{n=k}^1 (\frac{n+2}{n+4} A + \frac{2}{n+4} B_n) \left[\begin{array}{r} 1 \\ 1 \end{array}\right],$$
where the product expands as in Lemma \ref{lem:limit for random cases} (for instance). 
\end{lemma}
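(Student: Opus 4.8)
The plan is to derive the Final Simplification directly from equation \ref{eq:simp3} by absorbing one scalar into each matrix factor of the product. First I would recall, from equation \ref{eq:N} together with the definitions in \ref{eq:AB}, that $N_n = A + \frac{2}{n(n+2)}B$. The essential observation is that rescaling $N_n$ by the scalar $\frac{n+2}{n+4}$ produces exactly the factor appearing in the Final Simplification: since $B_n = \frac1n B$,
\[
\tfrac{n+2}{n+4}N_n = \tfrac{n+2}{n+4}A + \tfrac{n+2}{n+4}\cdot\tfrac{2}{n(n+2)}B = \tfrac{n+2}{n+4}A + \tfrac{2}{n(n+4)}B = \tfrac{n+2}{n+4}A + \tfrac{2}{n+4}B_n.
\]

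Next, because the scalars commute with matrix multiplication, I would pull all of them out of the ordered product, obtaining
\[
\prod_{n=k}^{1}\Big(\tfrac{n+2}{n+4}A + \tfrac{2}{n+4}B_n\Big) = \Big(\prod_{n=1}^{k}\tfrac{n+2}{n+4}\Big)\prod_{n=k}^{1}N_n,
\]
and then evaluate the scalar prefactor as a telescoping product, $\prod_{n=1}^{k}\frac{n+2}{n+4} = \frac{3\cdot 4}{(k+3)(k+4)} = \frac{12}{(k+3)(k+4)}$. Substituting this into the right-hand side of the Final Simplification, the explicit factor $\frac{1}{12}$ cancels the $12$ coming from the telescope. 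What remains is exactly the expression in equation \ref{eq:simp3}, except that its normalizing factor $\frac{1}{(1+k)^2}$ has been replaced by $\frac{1}{(k+3)(k+4)}$.

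Finally I would reconcile the two normalizations. Writing $v_k$ for the common quantity $\left[\begin{array}{rr} \frac{1}{k+1} & \frac{k}{k+1} \end{array}\right]\big(\prod_{n=k}^{1}N_n\big)\left[\begin{array}{r} 1 \\ 1 \end{array}\right]$, equation \ref{eq:simp3} asserts that $\sL(1,1)=\lim_{k\to\infty} v_k/(1+k)^2$, so this limit exists. The only point requiring any care — and the nearest thing to an obstacle — is the swap between the factors $\frac{1}{(1+k)^2}$ and $\frac{1}{(k+3)(k+4)}$; this is harmless because $\frac{(k+1)^2}{(k+3)(k+4)}\to 1$, and hence
\[
\frac{v_k}{(k+3)(k+4)} = \frac{(k+1)^2}{(k+3)(k+4)}\cdot\frac{v_k}{(k+1)^2}\longrightarrow \sL(1,1).
\]
Thus the limit on the right-hand side of the Final Simplification exists and equals $\sL(1,1)$, which is the claimed identity.
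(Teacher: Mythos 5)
Your proof is correct and follows essentially the same route as the paper's: both rest on the telescoping identity $\prod_{n=1}^{k}\frac{n+2}{n+4}=\frac{12}{(k+3)(k+4)}$ to absorb the scalar normalization into the matrix product, together with the observation that $\frac{(k+1)^2}{(k+3)(k+4)}\to 1$ justifies swapping the normalizing factors inside the limit. The only difference is cosmetic --- you run the algebra from the lemma's right-hand side back to equation \ref{eq:simp3}, while the paper rewrites \ref{eq:simp3} forward --- and you are slightly more explicit about why the limit survives the swap.
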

\begin{proof}
We will move the $\frac{1}{(1+k)^2}$ factor into the product in Equation \ref{eq:simp3}. Observe that
$\lim_{k \to \infty} \frac{(k+3)(k+4)}{(1+k)^2}=1.$
Furthermore, we can write
$\frac{12}{(k+3)(k+4)}=\prod_{n=1}^k \frac{n+2}{n+4}.$ Therefore we can rewrite Equation \ref{eq:simp3} as desired:
$$\sL(1,1)=\frac{1}{12} \lim_{k \to \infty} \left[\begin{array}{rr} \frac{1}{k+1} & \frac{k}{k+1} \end{array}\right] 
\prod_{n=k}^1 (\frac{n+2}{n+4} N_n) \left[\begin{array}{r} 1 \\ 1 \end{array}\right].$$
\end{proof}

Finally, we can prove the following theorem, which is just a restatement of Theorem \ref{thm:random tilings}.

\begin{theorem}
\name{thm:random tilings2}
Let $0<p,q<1$ and let $\nu=\mu_{p} \times \mu_{q} \times \mu_N$. Then
$\nu(\NS)=0$. That is, $\nu$-almost every $x \in X$ has a stable periodic orbit under $\Phi$. 
\end{theorem}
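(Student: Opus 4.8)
The plan is to funnel everything into a single scalar limit and then evaluate it by diagonalizing the transfer matrices. By the Combined Simplification (Lemma~\ref{lem:combined simplification}) it suffices to prove $\sL(1,1)=0$: indeed every $(p,q)\in(0,1)^2$ equals $(p(m),q(n))$ for the positive numbers $m=\tfrac{p}{1-p}$ and $n=\tfrac{q}{1-q}$, and $\nu(\NS)=\sL(m,n)$. By the Final Simplification (Lemma~\ref{lem:final simplification}), $\sL(1,1)=\tfrac{1}{12}\lim_{k}\left[\tfrac1{k+1}\ \tfrac{k}{k+1}\right]P_k\,[1\ 1]^{T}$, where $P_k=M_kM_{k-1}\cdots M_1$ and $M_n=\tfrac{n+2}{n+4}A+\tfrac2{n+4}B_n$ with $A,B_n$ as in \eqref{eq:AB}. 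The entire theorem is now the asymptotics of this explicit product of $2\times2$ matrices.

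First I would diagonalize $M_n$. A direct computation yields eigenvalues $\lambda_+(n)=\tfrac{(n+2)^2}{n(n+4)}=1+\tfrac{4}{n(n+4)}$ and $\lambda_-(n)=\tfrac{n}{n+4}$, with $n$-dependent eigenvectors $v_\pm^{(n)}=[\,n+1\ \ \pm1\,]^{T}$. Let $v_k=P_k[1\ 1]^T$ and, for each $k$, let $(a_{k+1},b_{k+1})$ be the coordinates of $v_k$ in the basis $\{v_+^{(k+1)},v_-^{(k+1)}\}$. A short calculation shows that the row--vector pairing collapses to $\left[\tfrac1{k+1}\ \tfrac{k}{k+1}\right]v_k=2a_{k+1}+\tfrac2{k+1}b_{k+1}$, so that $\sL(1,1)=\tfrac16\lim_k a_k$. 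Thus the whole problem reduces to showing that the coefficient $a_k$ of the \emph{expanding} eigendirection tends to $0$.

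The subtle point, which is the main obstacle, is that $a_k\to0$ even though $\lambda_+(n)>1$. The naive eigenvalue heuristic fails because the eigenbasis rotates with $n$ (the eigenvectors $v_\pm^{(n)}$ have first coordinate $n+1$), so consecutive bases are misaligned at order $1/n$ and the two eigendirections become coupled; one must analyze the genuinely coupled system. Passing to eigencoordinates $w_k=(a_k,b_k)$, the transition $w_k\mapsto w_{k+1}$ is $R_k=E_{k+1}^{-1}E_k\Lambda_k$, where $\Lambda_k=\mathrm{diag}(\lambda_+,\lambda_-)$ and $E_n$ has columns $v_\pm^{(n)}$. Expanding gives $R_k=I+\tfrac1kC+O(k^{-2})$ with the \emph{symmetric} matrix $C=\bigl(\begin{smallmatrix}-1/2 & -1/2\\ -1/2 & -9/2\end{smallmatrix}\bigr)$; its eigenvalues are $\theta_\pm=\tfrac{-5\pm\sqrt{17}}2$, both negative. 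This is a discrete regular--singular structure, so one expects $\|w_k\|$ to decay like $k^{\theta_+}$.

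To make this rigorous I would avoid the full Levinson machinery and use a one-line energy estimate. Since $C=C^{T}$, we have $R_k^{T}R_k=I+\tfrac2kC+O(k^{-2})$, whose largest eigenvalue is $1+\tfrac{2\theta_+}{k}+O(k^{-2})$, so $\|w_{k+1}\|^2\le\bigl(1+\tfrac{2\theta_++o(1)}{k}\bigr)\|w_k\|^2$. Telescoping and using $\prod_j(1+\tfrac{2\theta_+}{j})\sim k^{2\theta_+}$ (the $O(k^{-2})$ remainder contributes only a convergent product) gives $\|w_k\|^2=O(k^{2\theta_++\varepsilon})$ for every $\varepsilon>0$. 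The decisive inequality is $\theta_+<0$, i.e.\ $\sqrt{17}<5$, i.e.\ $17<25$; it forces $\|w_k\|\to0$, hence $a_k\to0$, hence $\sL(1,1)=0$. Combined with the first paragraph this yields $\nu(\NS)=0$ for all $0<p,q<1$, which is the theorem. The only remaining chores are the routine verifications of the expansion $R_k=I+\tfrac1kC+O(k^{-2})$ and of the collapse of the row--vector pairing, both direct computations.
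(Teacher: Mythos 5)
Your proposal is correct, and after the shared reductions (Lemmas \ref{lem:combined simplification} and \ref{lem:final simplification}, invoked exactly as in the paper) it evaluates $\sL(1,1)$ by a genuinely different route. The paper's proof is combinatorial: it expands $\prod_{n=k}^{1}\big(\tfrac{n+2}{n+4}A+\tfrac{2}{n+4}B_n\big)$ over random words in $\{a,b\}^k$, uses the rank-one structure of $B$ (a letter $b$ projects everything onto the direction $(1,1)^T$, and appending $a$'s can grow the pairing by at most a factor of $3$), derives an averaging recursion for the conditional means $\beta_k$, and kills it by induction ($\beta_k\le 2(8/9)^m$ eventually, for every $m$). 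You instead diagonalize each factor and track the moving eigenframe. I checked your computations and they are all correct: the eigenvalues $\lambda_+=\tfrac{(n+2)^2}{n(n+4)}$, $\lambda_-=\tfrac{n}{n+4}$ with eigenvectors $(n+1,\pm1)^T$; the frame change $E_{n+1}^{-1}E_n=\tfrac{1}{2(n+2)}\bigl(\begin{smallmatrix}2n+3&-1\\-1&2n+3\end{smallmatrix}\bigr)$, giving $R_k=I+\tfrac{1}{k}C+O(k^{-2})$ with $C=\bigl(\begin{smallmatrix}-1/2&-1/2\\-1/2&-9/2\end{smallmatrix}\bigr)$ symmetric and $\theta_\pm=\tfrac{-5\pm\sqrt{17}}{2}$ both negative; the collapse of the pairing to $2a_{k+1}+\tfrac{2}{k+1}b_{k+1}$; and the Weyl/energy telescoping $\|w_k\|^2=O(k^{2\theta_+})$. (The loose end in your second paragraph --- the $b$-term in the pairing --- is absorbed by the norm bound, so nothing is missing.) The trade-off: your spectral argument yields an explicit power-law rate, namely the $k$-th term of the limit defining $\sL(1,1)$, i.e. the measure $\nu(\sO_k)$ of points surviving $k$ renormalizations when $p=q=\tfrac12$, is $O(k^{(\sqrt{17}-5)/2})$, which is exactly the kind of quantitative refinement the paper says it hopes to obtain for curve-length statistics; the paper's argument is more elementary (no perturbation of moving eigenbases, only positivity, rank-one collapsing, and an explicit recursion) and stays inside the combinatorial framework of collapsed words used throughout the rest of the renormalization analysis, but it produces no explicit rate.
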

\begin{proof}
By Lemmas \ref{lem:limit for random cases} and \ref{lem:combined simplification}, it is sufficient to prove
that $\sL(1,1)=0$. This amounts to evaluating the limit in Lemma \ref{lem:final simplification}. The main idea is to think of expanding out the product $\prod_{n=k}^1 (\frac{n+2}{n+4} A + \frac{2}{n+4} B_n)$ into $2^k$ terms weighted by constants. Observe that the vector $(\frac{n+2}{n+4}, \frac{2}{n+4})$ is a probability vector, so that the sum of these constants will always be $1$ for all $k$.

To simplify notation we write $\v_k$ to denote the row vector $(\frac{1}{k+1}, \frac{k}{k+1})$ and 
write $\1$ to denote the column vector $(1,1)$. 

Consider the alphabet $\sA= \{a,b\}$. Given an integer $k \geq 0$, we think of an element $\alpha \in \sA^k$ as a map $\{1, \ldots, k\} \to \sA$ given by $n \mapsto \alpha_k$. 
For each $k \geq 0$ we have a the natural restriction map $r_k:\bigcup_{j=0}^\infty \sA^{k+j} \to \sA^k$.
We can write elements of $\alpha$ as word in $\sA$ with decreasing index. For instance
$\alpha=baa$ indicates that $\alpha_3=b$ and $\alpha_2=\alpha_1=a$. 
In particular, if $\alpha \in \sA^k$, we use $a\alpha$ and $b \alpha$ to denote the elements of $\sA^{k+1}$ so that
$r_k(a\alpha)=r_k(b\alpha)=\alpha$, $(a\alpha )_{k+1}=a$ and $(b\alpha)_{k+1}=b$.

We define a function $M$ from $\N \times \sA$ to the space of $2 \times 2$ matrices by 
$$M(n,a)=A \and M(n,b)=B_n,$$
where $A$ and $B_n$ are defined as in Equation \ref{eq:AB}. Given $\alpha \in \sA^k$ we define
$$M(\alpha)=\prod_{n=k}^1 M(n,\alpha_n)=M(k, \alpha_k) M(k-1, \alpha_{k-1}) \ldots M(1, \alpha_1).$$  
We define $M(\emptyset)$ to be the identity matrix, where $\emptyset \in \sA^0$ represents the empty word.

The spaces $\sA^k$ come with probability measures $\nu_k$. To define this measure consider the function 
$w: \N \times \sA \to \R$ given by $w(n,a)=\frac{n+2}{n+4}$ and $w(n,b)=\frac{2}{n+4}$. Since $\sA^k$ is finite, we can describe the measure $\nu_k$ by the rule $\nu_k( \{\alpha\})=\prod_{n=1}^k w(n,\alpha_n)$. Observe that these measures behave nicely with the restriction map as 
\begin{equation}
\nu_k=\nu_{k+j} \circ (r_k\big|_{\sA^{k+j}})^{-1}.
\end{equation}
Our reason for studying these objects is that the product we would like to take a limit 
of can be expressed as an integral for all $k$:
\begin{equation}
\name{eq:prod to integral}
\v_k \big(\prod_{n=k}^1 (\frac{n+2}{n+4} A + \frac{2}{n+4} B_n)\big) \1=\int_{\sA^k} \v_k M(\alpha) \1~d \nu_k(\alpha).
\end{equation}
To simplify notation, we define
$f_k(\alpha)=\v_k M(\alpha) \1 \in \R$ whenever $\alpha \in \sA^k$. 

We make some basic observations about these functions $f_k$. First,
\begin{equation}
\name{eq:s2}
\textrm{If $\alpha \in \sA^k$ and $\alpha_k=b$, then $M(\alpha) \1=\big(f_k(\alpha), f_k(\alpha)\big)$.}
\end{equation}
To see this observe that $M(\alpha)=B_k M \circ r_{k-1}(\alpha)$. By the structure of $B_k$ we have
that for any $\bw \in \R^2$, $B_k\bw=(c,c)$ for some $c \in \R$. So we can write  $M(\alpha) \1=(c,c)$ for some $c$. 
Then since $\v_k$ is a probability vector, $f_k(\alpha)=\v_k M(\alpha) \1=c$. Our second observation is:
\begin{equation}
\name{eq:s3}
\textrm{If $\alpha \in \sA^k$ and $\alpha_k=b$, then $j \mapsto f_{k+j}(a^j \alpha )$ for $j \geq 0$ is bounded and increasing.}
\end{equation}
To see this, observe that Statement \ref{eq:s2} says that $M(\alpha) \1=(c,c)$ for $c=f_k(\alpha)>0$. Then,
$M(a^j \alpha)=\big((2j+1)c,c\big)$ and so 
$$f_{k+j}(a^j \alpha)=(\frac{1}{k+j+1},\frac{k+j}{k+j+1}) \cdot \big((2j+1)c,c\big)=\frac{(3j+k+1)c}{k+j+1},$$
which is an increasing sequence converging to $3c$ as $j \to \infty$. 
In fact, since $f(\alpha)=c$, we have proved that
\begin{equation}
\name{eq:s4}
\textrm{If $\alpha \in \sA^k$ and $\alpha_k=b$, then $f_{k+j}(a^j \alpha) \to 3 f_k(\alpha)$ as $j \to \infty$.}
\end{equation}

Statements  \ref{eq:s3} and  \ref{eq:s4} restrict how the value of $f_{k+j}(\alpha a^j)$ grows as $j \to \infty$. In particular,
to exceed $3 f_k(\alpha)$ by appending letters to $\alpha$, $b$'s must be appended at some point. We will study the times at which $b$'s are appended.
Given $\alpha \in \sA^k$ define $L(\alpha)$ to be the largest $j$ with $1 \leq j \leq k$ for which $\alpha_j=b$, and define $L(\alpha)=0$ if no such $j$ exists.
Statements  \ref{eq:s3} and  \ref{eq:s4} then imply that for any $\alpha \in \sA^k$ we have
$$f_k(\alpha)<3 f_{L(\alpha)} \circ r_{L(\alpha)}(\alpha).$$
Note the right side is an evaluation of $f$ on a word terminating in a $b$. 
To understand the behavior of $f$ evaluated on such words we define the sequence of constants $\beta_k$ to be the average value of $f_k(\alpha)$ taken over all $\alpha \in \sA^k$ with $\alpha_k=b$. That is, $\beta_0=1$ and for $k>0$ we have
\begin{equation}
\name{eq:beta}
\beta_k=\frac{1}{\nu_k\{\alpha \in \sA^k|\alpha_k=b\}} \int_{\{\alpha \in \sA^k|\alpha_k=b\}} f_k(\alpha)~d \nu_k(\alpha).
\end{equation}
We can write the integral we would like to understand can be written in terms of the $\beta$'s. Observe,
$$\int_{\sA^k} f_k~d \nu_k<3 \sum_{n=0}^k P\big(L(\alpha)=n\big) \int_{\{\alpha' \in \sA^n~:~\alpha'_n=b\}}  f_{n}(\alpha')~d\nu_n(\alpha'),$$
where $P\big(L(\alpha)=n\big)=\nu_{k} \{\alpha \in \sA^{k}|\big(L(\alpha)=n\big\}$ denotes the $\nu_k$ probability that $L(\alpha)=n$. For $n>0$, 
we can evaluate this as
\begin{equation}
\name{eq:propL}
P\big(L(\alpha)=n\big)=\Big(\prod_{j=n+1}^{k} \frac{j+2}{j+4}\Big)\frac{2}{n+4}=\frac{2(n+3)}{(k+3)(k+4)}.
\end{equation}
In the case of $n=0$, we have $P\big(L(\alpha)=n)\big)=\frac{12}{(k+3)(k+4)}.$ 
Also, the integral in the sum is just $\beta_n$ so that 
$$\int_{\sA^k} f_k~d \nu_k<3 \left(\frac{12}{(k+3)(k+4)} \beta_0+\sum_{n=1}^k \frac{2(n+3)}{(k+3)(k+4)} \beta_n\right).$$
Observe the coefficients sum to one since $\sum_{n=0}^{k}  P\big(L(\alpha)=n)\big)=1$. Also, the coefficients of $\beta_n$ tend
to zero as $k \to \infty$. Therefore, we have
\begin{equation}
\name{eq:reduction to beta}\limsup_{k \to \infty} \int_{\sA^k} f_k~d \nu_k \leq \limsup_{k \to \infty} \beta_k.
\end{equation}
So, proving $\lim_{k \to \infty} \beta_k=0$ implies the theorem by Equation \ref{eq:prod to integral} and Lemma \ref{lem:final simplification}.

Now we wish to find an inductive formula for $\beta_k$. Observe that for all $\alpha' \in \sA^{k-1}$,  
$$\frac{1}{\nu_k\{\alpha \in \sA^k|\alpha_k=b\}} \nu_k(b\alpha')=\nu_{k-1}(\alpha').$$
Therefore, for $k>0$, we can simplify equation \ref{eq:beta} as
\begin{equation}
\name{eq:beta2}
\beta_k=\int_{\sA^{k-1}} f_k(b\alpha')~d \nu_{k-1}(\alpha').
\end{equation}
Suppose $\alpha=b\alpha'$ with $\alpha' \in \sA^{k-1}$. Let $n=L(\alpha')$,  
$\alpha''=r_{L(\alpha')}(\alpha)$, and $c=f_{n}(\alpha'')$. Note that $\alpha''$ is $\alpha$ with the last $b$ and the longest prior string of $a$'s
removed. We have
$$M(\alpha) \1=B_k A^{k-n-1} (c,c)=B_k \big((2k-2n-1)c,c\big)=\frac{2k-2n}{k} (c,c).$$
And so by Statement \ref{eq:s2}, we know
\begin{equation}
f_k(\alpha)=\frac{2k-2n}{k} f_n(\alpha'').
\end{equation}
Also for $n>0$, the probability that $L(\alpha')=n$ is given by $P\big(L(\alpha'=n)\big)=\frac{2(n+3)}{(k+2)(k+3)}$ by Equation \ref{eq:propL}.
And, in the special case of $n=0$ we have $P\big(L(\alpha'=0)\big)=\frac{12}{(k+2)(k+3)}$.
So we can evaluate the integral in Equation \ref{eq:beta2} as
$$\int_{\sA^{k-1}} f_k(b\alpha')~d \nu_{k-1}(\alpha')=\sum_{n=0}^{k-1} \frac{2k-2n}{k} P\big(L(\alpha'=n)\big) \int_{\{\alpha'' \in \sA^n|\alpha_n=b\}} f_n(\alpha'')~d \nu_n(\alpha'').$$
Plugging this and the values of $P\big(L(\alpha'=n)\big)$ into Equation \ref{eq:beta}, we obtain that for $k>0$,
\begin{equation}
\beta_k=\displaystyle \frac{24}{(k+2)(k+3)} \beta_0+\sum_{n=1}^{k-1} \frac{2(n+3)(2k-2n)}{k(k+2)(k+3)} \beta_n
\end{equation}
To avoid special treatment of $\beta_0$, we define $\gamma_k=\beta_k$ for $k>0$ and set $\gamma_0=2 \beta_0=2$. Then for $k>0$ we have
\begin{equation}
\name{eq:gamma sum}
\gamma_k=\sum_{n=0}^{k-1} \frac{2(n+3)(2k-2n)}{k(k+2)(k+3)} \gamma_n
\end{equation}
Define the constant $s_k$ for $k>0$ to be the sum of the coefficients of the above expression. That is,
\begin{equation}
\name{eq:s k}
s_k=\sum_{n=0}^{k-1} \frac{2(n+3)(2k-2n)}{k(k+2)(k+3)}=
\frac{2(k+1)(k+8)}{3(k+2)(k+3)}.
\end{equation}
Observe that $s_k \leq 1$ for all $k$. In particular, $\gamma_k \leq 2 s_k$ for all $k$. 
To see that $\gamma_k \to 0$ as $k \to \infty$, we will inductively prove that for all $m \geq 0$
\begin{equation}
\name{eq:inductive statement}
\textrm{For any $m>0$, there is a $K_m$ such that $\gamma_k \leq 2 (\frac{8}{9})^m$ for $k \geq K_m$.}
\end{equation}
This will prove the theorem by the remarks surrounding Equation \ref{eq:reduction to beta}.
The above argument proves this is true for $m=0$ with $K_m=0$. Now suppose Statement \ref{eq:inductive statement} is true for $m-1$. We will prove it for $m$. 
Observe that if you fix any $n$, the coefficients of $\gamma_n$ in the sum of Equation 
\ref{eq:gamma sum} tend to zero as $k \to \infty$. Then, there is a constant $J_m \geq K_{m-1}$ so that $k \geq J_m$ implies
$$\sum_{n=0}^{K_{m-1}-1} \frac{2(n+3)(2k-2n)}{k(k+2)(k+3)} \gamma_n \leq \frac{2}{9}(\frac{8}{9})^{m-1}. $$
By the inductive hypothesis, the remainder of the sum of Equation \ref{eq:gamma sum} satisfies 
$$\sum_{n=K_{m-1}}^{k-1} \frac{2(n+3)(2k-2n)}{k(k+2)(k+3)} \gamma_n \leq 2 (\frac{8}{9})^{m-1} \sum_{n=K_{m-1}}^{k-1} \frac{2(n+3)(2k-2n)}{k(k+2)(k+3)}.$$
Observe that this second sum is bounded from above by $s_k$ which tends to $\frac{2}{3}$ as $k \to \infty$. See Equation \ref{eq:s k}. We conclude that there is a constant $K_m \geq J_m$
so that $k \geq K_m$ implies 
$$\sum_{n=K_{m-1}}^{k-1} \frac{2(n+3)(2k-2n)}{k(k+2)(k+3)} \gamma_n \leq \frac{14}{9} (\frac{8}{9})^{m-1}.$$
Putting these two statements together yields that for $k \geq K_m$ we have 
$$\gamma_k=\sum_{n=0}^{k-1} \frac{2(n+3)(2k-2n)}{k(k+2)(k+3)} \gamma_n \leq \frac{16}{9} (\frac{8}{9})^{m-1}=2 (\frac{8}{9})^m.$$
\end{proof}

\bibliographystyle{amsalpha}
\bibliography{/home/pat/active/my_papers/bibliography}

\newcommand{\cn}[1]{{\bf [#1]:}} \def\scr{\mathcal} \def\cprime{$'$}
\providecommand{\bysame}{\leavevmode\hbox to3em{\hrulefill}\thinspace}
\providecommand{\MR}{\relax\ifhmode\unskip\space\fi MR }
\providecommand{\MRhref}[2]{%
  \href{http://www.ams.org/mathscinet-getitem?mr=#1}{#2}
}
\providecommand{\href}[2]{#2}
\begin{thebibliography}{GPST95}

\bibitem[AKT01]{AKT01}
Roy Adler, Bruce Kitchens, and Charles Tresser, \emph{{Dynamics of non-ergodic
  piecewise affine maps of the torus.}}, Ergodic Theory Dyn. Syst. \textbf{21}
  (2001), no.~4, 959--999 (English).

\bibitem[Bro08]{C08}
Cameron Browne, \emph{Duotone {T}ruchet-like tilings}, J. Math. Arts \textbf{2}
  (2008), no.~4, 189--196. \MR{2523538 (2010h:05079)}

\bibitem[GPST95]{GPST95}
David Gale, Jim Propp, Scott Sutherland, and Serge Troubetzkoy, \emph{Further
  travels with my ant}, Math. Intelligencer \textbf{17} (1995), no.~3, 48--56.
  \MR{1347895 (96j:68135)}

\bibitem[Hoo12]{H12}
W.~Patrick Hooper, \emph{Renormalization of polygon exchange maps arising from
  corner percolation}, to appear in Invent. math., 2012.

\bibitem[LM95]{LindMarcus}
Douglas Lind and Brian Marcus, \emph{An introduction to symbolic dynamics and
  coding}, Cambridge University Press, Cambridge, 1995. \MR{1369092
  (97a:58050)}

\bibitem[Lo{\`e}78]{Loeve78}
Michel Lo{\`e}ve, \emph{Probability theory. {II}}, fourth ed., Springer-Verlag,
  New York, 1978, Graduate Texts in Mathematics, Vol. 46. \MR{0651018 (58
  \#31324b)}

\bibitem[LR06]{LR06}
E.~Lord and S.~Ranganathan, \emph{Truchet tilings and their generalisations},
  Resonance \textbf{11} (2006), 42--50, 10.1007/BF02838881.

\bibitem[MT02]{MT}
Howard Masur and Serge Tabachnikov, \emph{Rational billiards and flat
  structures}, Handbook of dynamical systems, Vol.\ 1A, North-Holland,
  Amsterdam, 2002, pp.~1015--1089. \MR{1928530 (2003j:37002)}

\bibitem[OC99]{OC99}
T.~Ogawa and R.~Collins, \emph{Chains, flowers, rings and peanuts: graphical
  geodesic lines and their application to {P}enrose tiling}, Vis. Math.
  \textbf{1} (1999), no.~4, 1 HTML document; approx.\ 6 pp. (electronic).
  \MR{1837092}

\bibitem[Pet08]{Pete08}
G{\'a}bor Pete, \emph{Corner percolation on {$\mathbb Z^2$} and the square root
  of 17}, Ann. Probab. \textbf{36} (2008), no.~5, 1711--1747. \MR{2440921
  (2009f:60121)}

\bibitem[Pic89]{Pickover89}
C.~Pickover, \emph{Picturing randomness with {T}ruchet tiles}, J. Recreational
  Math. \textbf{21} (1989), 256--259.

\bibitem[PLV08]{PLV08}
G.~Poggiaspalla, J.~H. Lowenstein, and F.~Vivaldi, \emph{Geometric
  representation of interval exchange maps over algebraic number fields},
  Nonlinearity \textbf{21} (2008), no.~1, 149--177. \MR{2376323 (2010a:37078)}

\bibitem[SB87]{Smith87}
Cyril~Stanley Smith and Pauline Boucher, \emph{The tiling patterns of
  {S}\'ebastien {T}ruchet and the topology of structural hierarchy}, Leonardo
  \textbf{20} (1987), no.~4, 373--385.

\bibitem[Sch07]{S07}
Richard~Evan Schwartz, \emph{{Unbounded orbits for outer billiards. I.}}, J.
  Mod. Dyn. \textbf{1} (2007), no.~3, 371--424 (English).

\bibitem[Sch09]{S09}
\bysame, \emph{Outer billiards on kites}, Annals of Mathematics Studies, vol.
  171, Princeton University Press, Princeton, NJ, 2009. \MR{2562898
  (2011a:37081)}

\bibitem[Sch10]{Schwartz10}
\bysame, \emph{Outer billiards, the arithmetic graph, and the octagon},
  preprint \url{http://arxiv.org/abs/1006.2782}.

\bibitem[Tru04]{Truchet}
S\'ebastien Truchet, \emph{M\'emoire sur les combinaisons}, M\'emoires de
  l'Acad\'emie Royale des Sciences (1704), 363–372, Original text available
  at \url{http://gallica.bnf.fr/ark:/12148/bpt6k3486m} and the figures are
  available from \url{http://jacques-andre.fr/faqtypo/truchet/}.

\end{thebibliography}
\end{document}